\newtheorem{prop}{Proposition}
\newtheorem{theorem}{Theorem}
\newtheorem{lem}{Lemma}
\newtheorem{defn}{Definition}
\DeclareMathOperator{\vvec}{\operatorname{vec}}
\newcommand{\ba}{\mathbf{a}}
\newcommand{\bx}{\mathbf{x}}
\newcommand{\br}{\mathbf{r}}
\DeclareMathOperator{\rank}{rank}
\DeclareMathOperator{\trace}{trace}
\newcommand{\htwo}{\ensuremath{\text{H}_2}}
\newcommand{\heh}{\ensuremath{\text{HeH}^+}}
\def\@email#1#2{%
 \endgroup
 \patchcmd{\titleblock@produce}
  {\frontmatter@RRAPformat}
  {\frontmatter@RRAPformat{\produce@RRAP{*#1\href{mailto:#2}{#2}}}\frontmatter@RRAPformat}
  {}{}
}%
\begin{document}

\preprint{AIP/123-QED}

\title[Propagating 1-Electron Reduced Density Matrices]{Incorporating Memory into Propagation of 1-Electron Reduced Density Matrices}

\author{Harish~S. Bhat}
 \email{hbhat@ucmerced.edu}
 \affiliation{Department of Applied Mathematics, University of California, Merced, CA 95343, USA}
 
\author{Hardeep Bassi}
\affiliation{Department of Applied Mathematics, University of California, Merced, CA 95343, USA}

\author{Karnamohit Ranka}
\affiliation{Department of Chemistry \& Biochemistry, University of California, Merced, CA 95343, USA}

\author{Christine~M. Isborn}
\affiliation{Department of Chemistry \& Biochemistry, University of California, Merced, CA 95343, USA}

\begin{abstract}
For any linear system with unreduced dynamics governed by invertible propagators, we derive a closed, time-delayed, linear system for a reduced-dimensional quantity of interest.  This method does not target dimensionality reduction: rather, this method helps shed light on the memory-dependence of $1$-electron reduced density matrices in time-dependent configuration interaction (TDCI), a scheme to solve for the correlated dynamics of electrons in molecules.   Though time-dependent density functional theory has established that the $1$-electron reduced density possesses memory-dependence, the precise nature of this memory-dependence has not been understood.  We derive a symmetry/constraint-preserving method to propagate reduced TDCI electron density matrices.   In numerical tests on two model systems ($\htwo$ and $\heh$), we show that with sufficiently large time-delay (or memory-dependence), our method propagates reduced TDCI density matrices with high quantitative accuracy.  We study the dependence of our results on time step and basis set.  To implement our method, we derive the $4$-index tensor that relates reduced and full TDCI density matrices.  Our derivation applies to any TDCI system, regardless of basis set, number of electrons, or choice of Slater determinants in the wave function.
\end{abstract}


\maketitle

\section{Introduction}
\label{sec:intro}

For an $N$-electron system, the time-dependent Schr\"odinger equation (TDSE) is a partial differential equation for the system's electronic wave function $\Psi$. Ignoring spin and assuming fixed nuclear coordinates within the Born-Oppenheimer approximation, $\Psi$ depends on $3N$ spatial variables and time.  This electronic wave function governs the electron dynamics of the system, important in ultrafast charge transfer in the initial steps of photosynthesis and laser-induced processes, among others \cite{Jortner1990,Kling2008,Provorse2016,Li2020}. Unless $N$ is very small, we cannot expect to solve for $\Psi$.  Within a finite-dimensional basis set, a numerically exact solution of the time-independent problem is possible with configuration interaction (CI) 
\cite{Shavitt1977,mcweeny1989methods,szabo2012modern,Shavitt1998}.  CI expands $\Psi$ in a basis of Slater determinants; TDCI uses the same expansion but allows the coefficients to depend on time, enabling the solution of genuinely time-dependent problems---see Sections \ref{sect:background} and \ref{sect:methods} for details.  When all possible determinants are used, as in the present paper, we refer to the resulting $\Psi$ as a \emph{full CI} wave function.  

The full CI wave function was recently calculated for the C$_3$H$_8$ molecule ($N=26$), which used distributed computing to determine the energy of the system with 1.31 trillion determinants, the largest known full CI calculation to date \cite{Gao2024}. The full CI wave function can be approximated with various levels of truncation of the expansion of determinants, as is commonly done for molecular systems.  In the limit where we use only a single Slater determinant to approximate $\Psi$, we have Hartree-Fock theory, which neglects electron correlation \cite{lowdin1995historical}.

Note that $\Psi = \Psi(\br_1, \br_2, \ldots, \br_N, t)$ where each $\br_j$ is three-dimensional.  Let us now define the $1$-electron reduced density $\rho(\br,t)$ as $N$ times the integral of $|\Psi(\br, \br_2, \ldots, \br_N, t)|^2$ with respect to $\br_2, \ldots, \br_N$. Clearly, $\rho$ is a much lower-dimensional object than $\Psi$.  Many observables of practical interest can be computed easily and explicitly from $\rho$; additionally, given any physical observable, there exists \cite{Ullrich2012} a mapping expressing it as a function of $\rho$.

Time-dependent density functional theory (TDDFT), which can be derived\cite{Ullrich2012} from the TDSE, gives us an alternative route to compute $1$-electron reduced densities $\rho$ \emph{without} computing the full wave function $\Psi$.  For the same $N$-electron system, 
TDDFT yields computationally tractable evolution equations in three spatial variables plus time; solving these equations enables us to compute the $1$-electron reduced density $\rho(\br,t)$.  The Runge-Gross theorem establishes the existence of exchange-correlation potentials $V_\text{xc}$ such that the $1$-electron densities computed via TDDFT exactly match those computed from solutions of the TDSE\cite{runge1984density}.  Thus, in principle, TDDFT provides an accurate, scalable path to computing electron dynamics. Note that $V_\text{xc}$ is shorthand for $V_\text{xc}[\rho](\br,t)$---this potential depends \emph{functionally} on the $\rho$ trajectory $\{ \rho(\mathbf{w},s) \mid \mathbf{w} \in \mathbb{R}^3, s \leq t \}$.  Because exact, time-dependent exchange-correlation functionals are unavailable, TDDFT practitioners use approximations\cite{Gross2012} in which $V_\text{xc}[\rho](\br,t)$ depends only on $\{ \rho(\mathbf{w},t) \mid \mathbf{w} \in \mathbb{R}^3\}$, ignoring memory-dependence on $\rho(\cdot,s)$ for $s < t$.  This ubiquitous approximation, known as the adiabatic approximation within the TDDFT community, leads to errors that have been quantified in prior work \cite{elliott2012universal,Rappoport2012, Habenicht2014, Provorse2015, lacombe2020developing, Ranka2023}.

One way to devise more accurate $V_\text{xc}$ models that incorporate memory-dependence is to use machine learning.  For a spatially one-dimensional, two-electron model problem, the approach of learning memory-dependent $V_\text{xc}$ models from data has been pursued recently \cite{Suzuki2020,bhat2021dynamic}.  To scale these machine learning methods to larger, three-dimensional molecular systems, it will be essential to incorporate prior  knowledge regarding the memory-dependence of $V_\text{xc}$.  This will enable targeted searches over neural network architectures that match physically relevant dynamics.  In TDDFT, memory-dependence of $V_\text{xc}$ results in time-delayed equations of motion for $1$-electron Kohn-Sham orbitals \cite{maitra2002memory}.  The sum of the squared magnitudes of all such $1$-electron Kohn-Sham orbitals equals the $1$-electron reduced density $\rho$.  Memory-dependence of $V_\text{xc}$ gives rise to memory-dependence of $\rho$.

Let $P(t)$ and $Q(t)$ denote, respectively, the full density matrix and $1$-electron reduced density matrix (1RDM) that can be computed from the TDCI solution $\Psi$. 
 In this paper, we use all possible Slater determinants in our CI expansion for $\Psi$.  Then the resulting $\Psi$, $P$, and $Q$ all incorporate the effects of electron correlation and exchange.  Our TDCI approach circumvents the approximations of methods such as time-dependent Hartree-Fock and TDDFT.  Therefore, we can logically aim to \emph{improve} a theory such as TDDFT by studying the dynamics of TDCI $1$-electron reduced density matrices $Q(t)$.  The missing piece has been an equation of motion for $Q(t)$.

In this paper, we derive and study a closed, time-delay system that enables propagation of TDCI $1$-electron reduced density matrices $Q(t)$. 
 As this system requires the full many-body propagator, it does not achieve computational savings.  Instead, we use the system to generate insight into the explicit memory-dependence of $Q(t)$ on past values $Q(s)$ for $s < t$.  As we explain in Section \ref{sect:methods}, the TDCI $1$-electron reduced density matrices $Q(t)$ are closely related to to the  $1$-electron reduced densities $\rho$ in TDDFT.  Therefore, insight into the memory-dependence of the TDCI reduced electron density matrices $Q(t)$, enabled by the methods of this paper, will assist us in improving future models of $V_\text{xc}$ for TDDFT.

\subsection{Related Work}
\label{sect:related_work}
 The idea of trading high-dimensionality of one dynamical system (\ref{eqn:motivlinsys}) for time-delayed dynamics in a reduced-dimensional space (\ref{eqn:motivlinsys2}) is connected to notions from Mori-Zwanzig theory \cite{Zwanzig1961,Mori1965,chorin2000optimal,Darve2009,Chorin2014}, Takens embedding theory \cite{takens2006detecting,Stark1999,BroerHenk2010DSaC}, and Koopman operator theory \cite{MauroyAlexandre2020TKOi,OttoRowley2021,Brunton2022}.  We now review these connections.
 
Given a dynamical system in a high-dimensional space, Mori-Zwanzig theory produces closed systems for the time-evolution of low-dimensional observables \cite{Zwanzig1961,Mori1965}.  The resulting equations trade (i) non-time-delayed dynamics in the original, high-dimensional space for (ii) time-delayed dynamics in a low-dimensional space \cite{Chorin2014}.  The time delays are expressed in terms of a memory kernel.  Recent work has connected Mori-Zwanzig theory to Koopman operator methods, showing how to learn the memory kernel from data \cite{Lin2021}.

Our method shares with Mori-Zwanzig theory the trade-off between (i) and (ii).  A key difference is that Mori-Zwanzig methods often assume that the unobserved or irrelevant variables can be treated as noise, leading to generalized Langevin equations \cite{chorin2000optimal,Darve2009}.  If we view the transformation from full electron density matrices $P(t)$ to $1$-electron reduced density matrices $Q(t)$ as projection onto a subspace $S$, then the unobserved/irrelevant variables are the projection of $P(t)$ onto the orthogonal complement $S^\perp$.  We make no assumptions regarding the dynamics of these variables; they do not appear at all in our self-contained scheme for propagating $Q(t)$.  This is a key difference between our work and Mori-Zwanzig approaches.  In Section \ref{sect:pols}, we explain this difference further through a concrete example.

Takens justified trading (i) for (ii) in the sense of establishing a sufficient criterion for how much memory is required (i.e., the number of time-delayed scalar observations) to accurately represent dynamics in the original phase space \cite{takens2006detecting}.  Takens' theory establishes the existence of time-delayed embeddings, but does not give methods to find them.   
Time-delayed embeddings are linked to Koopman operator methods \cite{Mezic2004}, which are themselves closely connected to the dynamic mode decomposition (DMD) \cite{kutz2016dynamic,Kamb2020,Brunton2022}.  In Section \ref{sect:yprop}, we show concretely that our method results in the same type of model as time-delayed DMD, but without any need to learn from data.

 Our method shares structural features with self-energy methods where one solves for a many-body system's Green's function $G(t,t')$ via Dyson's equation \cite{stefanucci2013nonequilibrium, kadanoff2018quantum}
 \begin{equation*}
    (i\partial_t\ - H_0)G(t,t') = \delta(t-t') + \int_0^t\Sigma(t-t'')G(t'',t')dt''.
\end{equation*}
As in Mori-Zwanzig theory, forward evolution of $G$ depends on past states via a memory integral, here represented by convolution with the self-energy $\Sigma$, which itself depends on $G$.  Here $H_0$ is the non-interacting Hamiltonian;  $\Sigma$ incorporates interactions that go beyond mean-field theories\cite{negele1982mean}.  There are close analogies between the terms that comprise $\Sigma$ and the Coulomb and exchange-correlation ($V_{\text{xc}}$) potentials that one finds in TDDFT\cite{RevModPhys.74.601}.  As with $V_{\text{xc}}$, in general, the analytical form of $\Sigma$ is unknown, necessitating approximations such as GW\cite{aryasetiawan1998gw, reining2018gw, hedin1999correlation}.  Recent work has sought to use machine learning to improve these approximations\cite{Bassi2023, zhu2024predicting}.  The present paper's methods may yield insight into how to model $\Sigma$, especially its temporal decay within the memory integral.

A key feature of our system is that the propagator is unitary, implying the reversibility of the dynamics.  The Liouville-von Neumann equation (\ref{eqn:lvn0}) that governs the dynamics of $P(t)$ is not dissipative.  Hence we cannot directly apply balanced truncation methods that have been developed for open quantum systems \cite{Schmidt2011,Benner2020}.  Variants of these methods, when applied to the TDCI system, may yield substantial computational savings.  This is outside the scope of the present work.

\subsection{Liouville-von Neumann}
In several contexts, electron density matrices $P(t)$ satisfy an equation of motion known as the Liouville-von Neumann equation:
\begin{equation}
\label{eqn:lvn0}
i \frac{dP}{dt} = [H, P].
\end{equation}
Here $[H,P]=HP-PH$ denotes the commutator. In fact, we show in Section \ref{sect:background} that the TDCI \emph{full} electron density matrices $P(t)$ satisfy (\ref{eqn:lvn0}). The 1RDM $\widetilde{P}(t)$ from time-dependent Hartree-Fock theory satisfies (\ref{eqn:lvn0}) with a density-dependent Hamiltonian $H(\widetilde{P})$ \cite{mclachlan1964time,mcweeny1989methods}.  Thus it is natural to ask whether the TDCI $1$-electron \emph{reduced} density matrices $Q(t)$ evolve via (\ref{eqn:lvn0}).  If they do, one can show that they must satisfy $Q(t) = U(t)^\dagger Q(0) U(t)$, where $U(t)$ is unitary and $\dagger$ denotes conjugate transpose; consequently, the eigenvalues of $Q(t)$ must equal those of $Q(0)$ for all $t \geq 0$.

\begin{figure}[tbh]
    \centering
     \includegraphics[width=0.9\linewidth]{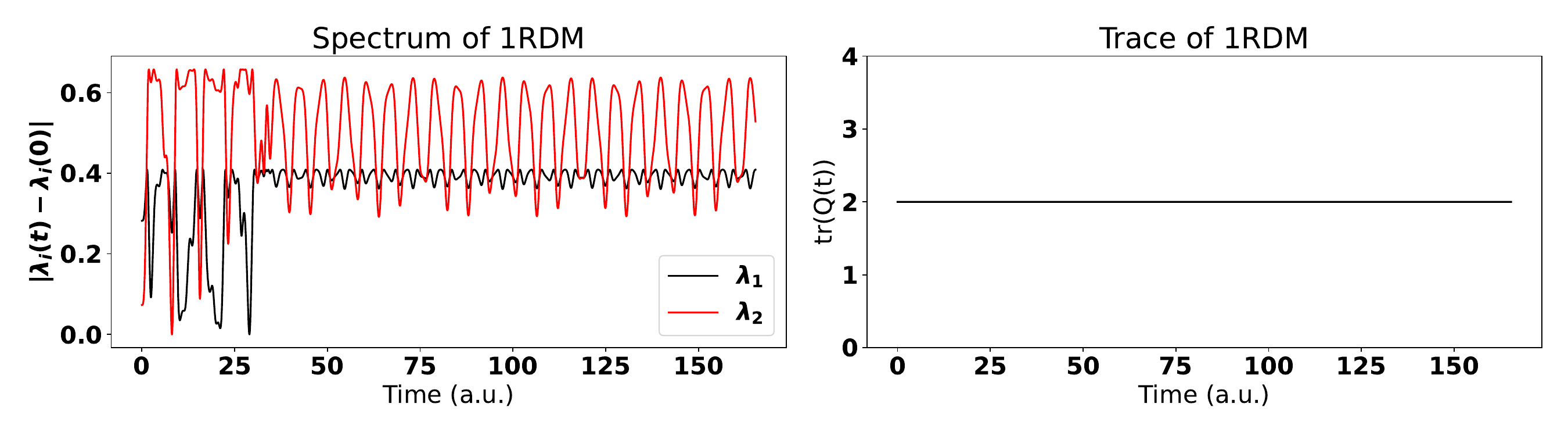}
    \caption{For the molecule $\heh$ in the STO-3G atomic orbital basis set, we apply the methods described in Section \ref{sect:procedure} to compute $1$-electron reduced density matrices $Q(t)$.  From the left plot, we conclude that the eigenvalues of $Q(t)$ do not stay constant in time, implying that $Q(t)$ cannot satisfy the Liouville-von Neumann equation for \emph{any} choice of Hamiltonian $H(t)$.  The right plot shows that $\trace(Q(t)) = 2$.  Since $\heh$ has $N=2$ electrons, this agrees with $\trace(Q(t)) = N$, which we prove in Appendix \ref{appendix:trace}.}
    \label{fig:1rdmtrace}
\end{figure}

This gives us a way to test if it is possible that TDCI $1$-electron reduced density matrices $Q(t)$ satisfy the Liouville-von Neumann equation.  For four two-electron molecular systems ($\heh$ and $\htwo$ in both STO-3G and 6-31G atomic orbital basis sets, for which we can compute the full CI solution with all possible determinants), we have applied TDCI methods---see Sections \ref{sect:background} and \ref{sect:procedure}---to compute full density matrices $P(t)$, from which we have computed $1$-electron reduced density matrices $Q(t)$.  Let us focus attention on $\heh$ in the STO-3G atomic orbital basis set; here $Q(t)$ is $2 \times 2$ and thus has two eigenvalues, $\lambda_1(t)$ and $\lambda_2(t)$. In Figure \ref{fig:1rdmtrace}, we plot the absolute deviation $| \lambda_j(t) - \lambda_j(0)|$, and we see that the eigenvalues of the $Q(t)$ matrices do not stay constant in time.  We omit plots for the other three molecular systems, all of which would show substantial fluctuations in the eigenvalues of $Q(t)$.

\emph{Because the eigenvalues of the TDCI $1$-electron reduced density matrices $Q(t)$ do not stay constant in time, there does not exist a Hamiltonian $H$ such that these $Q(t)$ matrices satisfy the Liouville-von Neumann equation.}  Thus we cannot apply methods from our prior work \cite{bhat2020machine,gupta2022statistical} to learn a Hamiltonian $H$ such that these $Q(t)$ satisfy (\ref{eqn:lvn0}).

\subsection{Reduced Density Matrices (RDMs)}
A subset of the many-body quantum physics and chemistry literature is concerned with deriving governing equations for RDMs, for both time-independent \cite{ColemanA.J.1963SoFD,PhysRevA.57.4219,Mentel2015} and time-dependent \cite{PhysRevA.75.012506,Giesbertz2010,10.1063/1.3499601,AkbariA.2012Citt} problems.  Consider the operator obtained by integrating $| \Psi \rangle \langle \Psi |$ with respect to all spins and all but $k$ of the electrons' positions; the matrix representation of this operator is the $k$-electron reduced density matrix or $k$RDM.  Here we have used bra-ket notation---see Definition \ref{def:fulldenop}.
Starting from the Schr\"odinger equation, one can derive a governing equation for the 1RDM $Q(t)$, but it will necessarily involve the 2RDM \cite{FerreNicolas2016RDMF}.  While there have been multiple attempts to close the 1RDM equation via approximate reconstruction of the 2RDM from the 1RDM \cite{PhysRevA.57.4219,HerbertJohnM.2002Cotd,PhysRevLett.101.033004,Giesbertz2010,Jeffcoat2014}, we have not seen any prior attempts to use \emph{memory} (present and past 1RDMs) to derive a closed system.  More generally, if one seeks an equation of motion for the 2RDM, it will either involve reconstruction of higher-order matrices such as the 3RDM or 4RDM \cite{PhysRevA.57.4219,PhysRevA.91.023412,PhysRevA.95.033414}, or a reexpression of constraints on higher-order RDMs (the $N$-representability conditions) \cite{Fosso-TandeJacob2016LVTR,PhysRevLett.130.153001}. Even within this literature on higher-order RDM systems, the use of memory appears to be unexplored.

\subsection{Summary of Contributions}
In Section \ref{sect:generalderivs}, we give a general method to derive a self-contained linear delay equation for the propagation of a reduced quantity of interest.  We provide this section to highlight the key steps of the derivation, which already constitutes one novel contribution of this paper, prior to introducing any notions from quantum mechanics.  The general setting also allows us to contrast our method with both time-delayed DMD and a Mori-Zwanzig approach, which we  do in Section \ref{sect:generalderivs}.

Section \ref{sect:background} reviews essential background material, and is intended to increase the accessibility of the paper to applied mathematicians.  Here we  define a subset of concepts from quantum/computational chemistry such as orbitals, Slater determinants, and basis functions.  We review TDCI and its relation to the TDSE.  We show that the full density matrix $P(t)$ satisfies the Liouville-von Neumann equation.   Theorem \ref{constanttrace} in Appendix \ref{appendix:trace} shows that $1$-electron reduced density matrices $Q(t)$ have constant trace equal to $N$, as exemplified by the right plot in 
Figure \ref{fig:1rdmtrace}. 

In Section \ref{sect:methods}, we specialize the derivation from Section \ref{sect:generalderivs} to the TDCI setting.  We begin by deriving several  useful properties of $1$-electron reduced density matrices that stem from TDCI.  The derivations here are general with respect to the number of electrons ($N$), the number of atomic/molecular orbitals ($K$), and the number of configurations (or Slater determinants) included in the wave function ($N_C$).  Proposition \ref{Bten} enables concrete evaluation of the $4$-index tensor $B$ that relates full configuration interaction (CI) electron density matrices to $1$-electron reduced density matrices.   Theorem \ref{Qpropsymm} establishes a self-contained delay equation for $Q(t)$ that preserves Hermitian symmetry, constant trace, and identically zero elements.


We present numerical results in Section \ref{sec:results}, applying the time-delayed scheme from Theorem \ref{Qpropsymm} to propagate reduced density matrices $Q(t)$ for four different molecular systems.  First, we establish the accuracy of the $Q(t)$ matrices computed using the scheme from Theorem \ref{Qpropsymm}.  For each molecular system, accuracy is controlled by the total memory length, which is defined to be the physical time corresponding to the maximum time delay in the evolution equation for $Q(t)$.  With sufficiently large total memory length, the resulting $Q(t)$ matrices computed using our method match those computed directly from full TDCI simulations.  We also quantify how the accuracy of $Q(t)$ depends on our choice of time step, basis set, and stride.  A stride of $k$ means that, in the equation to propagate $Q(t)$, we use every $k$-th previous reduced density matrix, up to a preset limit.

\section{Core Ideas}
\label{sect:generalderivs}
Here we present derivations that illustrate the core ideas of our approach without the complexity of the quantum system that we present and analyze in Sections \ref{sect:background} and \ref{sect:methods}.  The derivation in Section \ref{sect:yprop} clarifies the key steps of our time-delay scheme to propagate $\mathbf{y}(t)$.  In Section \ref{sect:qcderiv}, we revisit and refine this derivation for the quantum chemistry context.  The derivation in Section \ref{sect:pols} frames our problem as a partially observed linear system, enabling a comparison with Mori-Zwanzig theory.

We begin with the discrete-time linear system
\begin{equation}
\label{eqn:motivlinsys}
\mathbf{z}(t+1) = A(t) \mathbf{z}(t)
\end{equation}
with initial condition $\mathbf{z}(0)$.  Here $t$ is a nonnegative integer, $A(t)$ is an $n \times n$ complex, unitary matrix, and $\mathbf{z}(t)$ is a complex $n \times 1$ vector.  We take $A(t)$ to be unitary as this corresponds to quantum systems of interest.  More generally, the derivations in this section hold when $A(t)$ is invertible, a condition always satisfied when (\ref{eqn:motivlinsys}) arises by discretizing a linear ordinary differential equation with a sufficiently small time step.  When $A$ is unitary, $A^\dagger = A^{-1}$; in case $A(t)$ is invertible but not unitary, one can replace $A(t)^\dagger$ with $A(t)^{-1}$.

Suppose we are given a complex $m \times n$ matrix $R$ with $m \ll n$ that enables one to compute, at each time $t$, a \emph{reduced quantity of interest}:
\begin{equation}
\label{eqn:reduced}
\mathbf{y}(t) = R \mathbf{z}(t).
\end{equation}
Here $\mathbf{y}(t)$ is a complex $m \times 1$ vector.  Starting with $\mathbf{y}(t+1)$ and using (\ref{eqn:motivlinsys}-\ref{eqn:reduced}), we have
\begin{equation}
\label{eqn:motivlinsys2}
\mathbf{y}(t+1) = R \mathbf{z}(t+1) = R A(t) \mathbf{z}(t).
\end{equation}
To close this equation, and thereby enable propagation of $\mathbf{y(t)}$, we must express $\mathbf{z}(t)$ in terms of $\{ \mathbf{y}(t-\ell), \ldots, \mathbf{y}(t-1), \mathbf{y}(t) \}$ (for some $\ell \leq t$).

\subsection{Self-Contained Propagation of Reduced Quantities} 
\label{sect:yprop}
From (\ref{eqn:motivlinsys}), we can derive
\begin{equation}
\label{eqn:backprop}
\mathbf{z}(t-\ell) =  \prod_{j=0}^{\ell-1} A(t-\ell+j)^\dagger \mathbf{z}(t).
\end{equation}
Multiplying both sides by $R$ and using (\ref{eqn:reduced}) at time $t - \ell$ yields
\begin{equation}
\label{eqn:usefulfact}
\mathbf{y}(t-\ell) = R  \prod_{j=0}^{\ell-1} A(t-\ell+j)^\dagger  \mathbf{z}(t).
\end{equation}
Stacking column vectors vertically and applying both (\ref{eqn:reduced}) and (\ref{eqn:usefulfact}), we obtain
\begin{equation}
\label{eqn:Mdef}
\mathbf{Y}_{\ell}(t) = \begin{bmatrix}
\mathbf{y}(t)\\
\mathbf{y}(t-1)\\
\mathbf{y}(t-2)\\
\vdots\\
\mathbf{y}(t-\ell)
\end{bmatrix} = 
\begin{bmatrix}
R \mathbf{z}(t)\\
R \mathbf{z}(t-1)\\
R \mathbf{z}(t-2)\\
\vdots\\
R \mathbf{z}(t-\ell)
\end{bmatrix}
= \underbrace{\begin{bmatrix}
R \\
R A(t-1)^\dagger \\
R A(t-2)^\dagger A(t-1)^\dagger \\
\vdots\\
R \prod_{j=0}^{\ell-1} A(t-\ell+j)^\dagger 
\end{bmatrix}}_{M(t)} \mathbf{z}(t).
\end{equation}
Here $M(t)$ has size $(\ell+1) m \times n$, i.e., $\ell+1$ block rows each of size $m \times n$.  Assume that $R$, the first block of $M(t)$, has rank $m$ (full rank because $m < n$).  Each subsequent block of $M(t)$ has rank $m$, since each consists of $R$ multiplied by a unitary matrix.  Suppose that these blocks, when concatenated, yield a matrix of full rank.  Then $M(t)$ will have rank $n$ as long as
$\ell \geq \lfloor n/m \rfloor - 1$.
\emph{Under this condition, the matrix $M(t)$ will have a pseudoinverse $M(t)^{+}$ that is a one-sided left inverse.}  Applying the pseudoinverse, we obtain
\begin{equation}
\label{eqn:propy}
\mathbf{y}(t+1) = R \mathbf{z}(t+1) = R A(t) \mathbf{z}(t) = \underbrace{R A(t) M(t)^{+}}_{S(t)} \mathbf{Y}_{\ell}(t) = S(t) \begin{bmatrix}
\mathbf{y}(t)\\
\mathbf{y}(t-1)\\
\mathbf{y}(t-2)\\
\vdots\\
\mathbf{y}(t-\ell)
\end{bmatrix}
\end{equation}
\emph{This self-contained, linear, time-delay equation for the reduced variable $\mathbf{y}(t)$ underpins the present study.} Note that $S(t)$ has size $m \times (\ell+1) m$.  In practice, we compute an approximate pseudoinverse $M(t)^{+}$ by ignoring sufficiently small singular values of $M(t)$.  This enables use of (\ref{eqn:propy}) even when $M(t)$ is ill-conditioned and/or rank-deficient.

Time-delayed DMD aims to learn $S$ that minimizes the squared error between the left- and right-hand sides of
\begin{equation}
\label{eqn:motivlinsys3}
\mathbf{y}(t+1) = \sum_{j=0}^{\ell} S_{j} \mathbf{y}(t - j) = \underbrace{\begin{bmatrix}S_0 & S_1 & \cdots & S_{\ell}\end{bmatrix}}_{S} \begin{bmatrix} \mathbf{y}(t) \\ \mathbf{y}(t-1) \\ \vdots \\ \mathbf{y}(t-\ell) \end{bmatrix}
\end{equation}
In time-delayed DMD, one learns $S$ from observations of $\mathbf{y}(t)$. We see that (\ref{eqn:propy}) is structurally identical to (\ref{eqn:motivlinsys3}) except that $S$ is replaced by $S(t)$.  \emph{The key difference between our method and time-delayed DMD is that rather than learn $S(t)$ from data, we derive $S(t)$ from first principles,  leveraging the structure of (\ref{eqn:motivlinsys}-\ref{eqn:reduced}).}  In Section \ref{sect:methods}, we will see that no observations of $\mathbf{y}(t)$ are needed or used to derive the $S(t)$ that appears in (\ref{eqn:propy}). To compute $M(t)^+$ from $M(t)$, we use (i) the unitary propagator $A(t)$, (ii) the reduction matrix $R$, and (iii) one singular value decomposition (SVD) per time step.

Reexamining the derivation above, we find that there is no reason why we must step back by precisely one time step at a time.  That is, we can formulate our method using a \emph{stride of $k$ steps}.  With a general stride of $k \geq 1$, we  replace the $\mathbf{Y}_{\ell}(t)$ vector in (\ref{eqn:Mdef}) with
\begin{equation}
\label{eqn:Mdef2}
\mathbf{Y}_{\ell}(t) = \begin{bmatrix}
\mathbf{y}(t)\\
\mathbf{y}(t- k)\\
\mathbf{y}(t-2 k)\\
\vdots\\
\mathbf{y}(t-\ell k)
\end{bmatrix} = 
\begin{bmatrix}
R \mathbf{z}(t)\\
R \mathbf{z}(t-k)\\
R \mathbf{z}(t-2 k)\\
\vdots\\
R \mathbf{z}(t-\ell k)
\end{bmatrix}
= \underbrace{\begin{bmatrix}
R \\
R \prod_{j=0}^{k-1} A(t-k+j)^\dagger \\
R \prod_{j=0}^{2 k-1} A(t-2 k+j)^\dagger \\
\vdots\\
R \prod_{j=0}^{\ell k-1} A(t-\ell k+j)^\dagger 
\end{bmatrix}}_{M(t)} \mathbf{z}(t).
\end{equation}
The stride is important for the following reason. The intrinsic time scales of the system (\ref{eqn:motivlinsys}) will determine how close each $A(t)$ is to the identity.  If $A(t-1)$ is too close to the identity (i.e., if the system is slowly varying at time $t-1$), then  in the $M(t)$ matrix that appears in (\ref{eqn:Mdef}), the block $R A(t-1)^\dagger$ will be close to the $R$ block.  This may make it difficult for $M(t)$ to achieve full rank.  By using a sufficiently large stride $k$ in (\ref{eqn:Mdef2}), we can correct for this.  Our motivating principle is that with $k$ sufficiently large, each successive $\prod$ term in the $M(t)$ matrix in (\ref{eqn:Mdef2}) will \emph{differ enough from the identity} so that $M(t)$ will have high (if not full) rank.

In this work, we focus on the discrete-time scheme (\ref{eqn:Mdef2}).  Our results in Section \ref{sect:results}---see Section \ref{sect:timesteprefinement} in particular---hint that the dynamics may converge as we refine our time step $\Delta t$ while keeping the physical memory length $k \ell \Delta t$ constant.  In Appendix \ref{sect:continuoustime}, we give a continuous-time generalization of the above derivation, but we reserve further discussion of this for future work.

\subsection{Connection with Partially Observed Linear Systems}
\label{sect:pols}
To provide further mathematical motivation for the present paper, we examine the consequences of viewing (\ref{eqn:motivlinsys}-\ref{eqn:reduced}) through the lens of partially observed linear systems\cite[Section 1]{pan2020structure}.  In this subsection only, we assume $A(t) = A$ is a fixed (not time-dependent) unitary matrix.  As mentioned above, we assume that the matrix $R$ from (\ref{eqn:reduced}) has rank $m$.  Starting with $R$ and completing the basis, one can find $\widetilde{R}$ of size $(n-m) \times n$ such that the square $n \times n$ matrix $\mathbf{R} = \displaystyle \begin{bmatrix} R \\ \widetilde{R} \end{bmatrix}$ has rank $n$.  Then
\[
\begin{bmatrix} \mathbf{y}(t) \\ \widetilde{\mathbf{y}}(t) \end{bmatrix} = \mathbf{R} \mathbf{z}(t) \ \Longrightarrow \ \mathbf{z}(t) =  \mathbf{R}^{-1} \begin{bmatrix} \mathbf{y}(t) \\ \widetilde{\mathbf{y}}(t) \end{bmatrix}.
\]
With this, we have
\[
\begin{bmatrix} \mathbf{y}(t+1) \\ \widetilde{\mathbf{y}}(t+1) \end{bmatrix} = \mathbf{R} \mathbf{z}(t+1) = \mathbf{R} A \mathbf{z}(t) = \underbrace{ \mathbf{R} A \mathbf{R}^{-1} }_{B} \begin{bmatrix} \mathbf{y}(t) \\ \widetilde{\mathbf{y}}(t) \end{bmatrix}
\]
Now we partition the $n \times n$ matrix $B$ into blocks that match the sizes of $\mathbf{y}$ and $\widetilde{\mathbf{y}}$: $B_{11}$ is $m \times m$, $B_{12}$ is $m \times (n-m)$, $B_{21}$ is $(n-m) \times m$, and $B_{22}$ is $m \times m$.  Then the previous equation becomes
\[
\begin{bmatrix} \mathbf{y}(t+1) \\ \widetilde{\mathbf{y}}(t+1) \end{bmatrix} = \begin{bmatrix} B_{11} & B_{12} \\ B_{21} & B_{22} \end{bmatrix} \begin{bmatrix} \mathbf{y}(t) \\ \widetilde{\mathbf{y}}(t) \end{bmatrix}.
\]
From this, one can derive the following time-delay evolution equation\cite{pan2020structure}:
\begin{equation}
\label{eqn:ourMZ}
\mathbf{y}(t+1) = B_{11} \mathbf{y}(t) + \sum_{s=0}^{t-1} B_{12} B_{22}^s B_{21} \mathbf{y}((t-1)-s) + B_{12} B_{22}^t \widetilde{\mathbf{y}}(0).
\end{equation}
This is essentially a Mori-Zwanzig equation: the three terms on the right-hand side can be viewed as (i) a non-time-delayed or Markovian term $B_{11} \mathbf{y}(t)$, (ii) a time-delayed or non-Markovian term that is a linear combination of  $\{\mathbf{y}(0), \ldots, \mathbf{y}(t-1)\}$, and (iii) a closure term that involves $\widetilde{\mathbf{y}}$.  To handle this closure term, one assumes\cite{pan2020structure} that $B_{12} B_{22}^t \widetilde{\mathbf{y}}(0) \to 0$.  More generally, $\widetilde{\mathbf{y}}(t)$ is often viewed as noise \cite{Chorin2014}.

Here we note that as $A$ is unitary and $B = \mathbf{R} A \mathbf{R}^{-1}$, the eigenvalues of $B$ all have modulus $1$.  Hence we cannot expect that $B_{12} B_{22}^t \widetilde{\mathbf{y}}(0) \to 0$ for generic unitary choices of $A$ and $\mathbf{R}$.  More generally, we cannot expect to use arguments based on retaining (or projecting onto) eigenvectors of $B$ associated with the top $k$ (in modulus) eigenvalues of $B$.  It is likely that we will not be able to truncate the summation in the memory term.

In preliminary work, we implemented (\ref{eqn:ourMZ}) for a system that corresponds to (\ref{eqn:motivlinsys}-\ref{eqn:reduced}) with unitary $A$.  Given $R$, it is up to us to form $\widetilde{R}$ such that $\mathbf{R}$ is full rank.  We found that when $A$ is dense and unitary, for many choices of $\widetilde{R}$, the resulting system (\ref{eqn:ourMZ}) is numerically unstable for long-time propagation.  When $A$ is diagonal and unitary, long-time propagation succeeds.  In this regime, we found that we cannot ignore the $\widetilde{\mathbf{y}}(0)$ term; it does not decay as $t \to \infty$.  Similarly, regarding the second term in (\ref{eqn:ourMZ}), unless we sum over \emph{all} past states $\mathbf{y}(s)$ for $s < t$, the resulting $\mathbf{y}(t)$ propagation is highly inaccurate.

The above provides additional mathematical/computational motivation to pursue our propagation scheme (\ref{eqn:propy}).  In (\ref{eqn:propy}), there is no requirement to solve for $\widetilde{R}$, the time delay is fixed, and we do not encounter any closure problems.

\section{Background}
\label{sect:background}
We focus our attention on the dynamics of electrons in molecular systems under the Born-Oppenheimer approximation, in which we treat the nuclei as fixed.  This yields the problem of solving for the dynamics of $N$ electrons in the field of $N'$ nuclei.  Then the equation of motion is the time-dependent Schr\"odinger equation (TDSE)
\begin{equation}
\label{eqn:tdse}
i\frac{\partial}{\partial t} \Psi(\bx_1, \bx_2, \ldots, \bx_N, t) = \hat{H}(t) \Psi(\bx_1, \bx_2, \ldots, \bx_N, t).
\end{equation}
Here each $\mathbf{x}_k = (\br_k, \sigma_k)$ with spatial coordinate $\br_k \in \mathbb{R}^3$ and spin coordinate $\sigma_k$ that does not appear in the Hamiltonian $\hat{H}(t)$.  We describe our treatment of the spin coordinates below in Definition \ref{defn:spinorbitals}.  The Hamiltonian $\hat{H}(t)$ is a self-adjoint operator that can be decomposed as $\hat{H}(t) = \hat{H}_e + \hat{V}_{\text{ext}}(t)$, the sum of the electronic Hamiltonian $\hat{H}_e$ with an external potential operator $\hat{V}_{\text{ext}}(t)$ that models, for instance, an applied electric field.  In atomic units, the electronic Hamiltonian is
\begin{equation}
\label{eqn:molham}
\hat{H}_e = - \sum_{k=1}^{N} \frac{1}{2} \nabla_k^2 - \sum_{k=1}^{N} \sum_{A=1}^{N'} \frac{Z_A}{\| \br_k - \mathbf{R}_A \|} + \sum_{k=1}^{N} \sum_{j < k} \frac{1}{\| \br_j - \br_k \|}.
\end{equation}
Here $\nabla_k^2$ is the Laplacian over coordinates $\br_k \in \mathbb{R}^3$. The first term encodes the kinetic energy operators of all $N$ electrons; the remaining terms are potential energy operators.   The second term encodes all electron-nuclear Coulomb attractions, with nucleus $A$ having charge $Z_A$ and fixed position $\mathbf{R}_A$.  The third term encodes all electron-electron Coulomb repulsions. For now, we have ignored spin coordinates of $\Psi$, as they do not appear in the Hamiltonian.  Further details can be found in standard references\cite{szabo2012modern,mcweeny1989methods}.

As $\Psi$ is a function of $3N+1$ variables, it should not be surprising that exact solutions are unknown except when $N=1$.  For $N \geq 2$, solving (\ref{eqn:tdse}-\ref{eqn:molham}) numerically presents serious challenges. Classical numerical methods such as finite differences and finite elements do not address the key issue, which is how to represent/store $\Psi$.  Computational chemistry provides several methods to do this, at varying levels of approximation.  One such method is configuration interaction, which we now describe.


\subsection{Time-Dependent Configuration Interaction}
\label{sect:TDCI}
We begin with a choice of \emph{basis set}. As they are designed to approximate eigenfunctions of an atomic Hamiltonian, the functions in the basis set are often called \emph{atomic orbitals}.  Two atom-centered basis sets commonly used for theoretical studies, which we refer to later in this paper, are STO-3G \cite[\S 3.6.2]{szabo2012modern} and 6-31G \cite{Hehre1972}.  STO-3G is a minimal basis set that uses a linear combination of three Gaussians to describe a Slater type orbital.  For hydrogen and helium, STO-3G provides one s-orbital for each atom.  6-31G is a double valence basis set that uses a larger number of Gaussian functions to better describe the nuclear cusp, and for hydrogen and helium provides s-orbitals of different size on each atom to deliver more flexibility for the electronic distribution.   Let $K$ denote the number of functions (atomic orbitals) in the basis set.  As $K$ increases, we approach the complete basis set limit, yielding more accurate calculations at added computational cost.

\begin{defn}[Coordinates, Spin, and Orbitals]\label{defn:spinorbitals}
Let $\mathbf{x} = (\br, \sigma)$ where $\br$ is a spatial coordinate and $\sigma$ is a spin coordinate.  Let $\{f_k(\br)\}_{k=1}^K$ denote a set of atomic orbitals. 
 Let $\alpha(\sigma)$ and $\beta(\sigma)$ denote two functions of $\sigma$ that correspond to spin up and spin down, the two possible spin states of an electron.  We take $\alpha$ and $\beta$ to be orthonormal, i.e., $\langle \alpha, \alpha \rangle = \langle \beta, \beta \rangle = 1$ while $\langle \alpha, \beta \rangle = 0$.  Let $\{\phi_j(\br)\}_{j=1}^{K}$ be an orthonormal set of spatial molecular orbitals; each $\phi_j$ is a linear combination of the $K$ atomic orbitals, i.e.,
\begin{equation}
\label{eqn:MO}
\phi_j(\br) = \sum_{k=1}^K Y_{jk} f_k(\br).
\end{equation}
Define the orthonormal set of spin-orbitals $\{ \chi_k(\mathbf{x}) \}_{k=1}^{2K}$ via
\begin{equation}
\label{eqn:spinorbital}
\chi_{2j - 1}(\mathbf{x}) = \phi_j(\br)\alpha(\sigma) \  \text{ and } \ 
\chi_{2j}(\mathbf{x}) = \phi_j(\br)\beta(\sigma), \quad  \text{ for } j = 1, \ldots, K.
\end{equation}
\end{defn}
Using the $1$-electron molecular spin-orbitals, we form \emph{Slater determinants} \cite[\S 2.2.3]{szabo2012modern}. Their use is motivated by the Pauli exclusion principle, which requires that electronic wave functions be antisymmetric.  Exchanging $\mathbf{x}_j$ and $\mathbf{x}_k$ on the left-hand side of (\ref{eqn:slater}) causes an exchange of rows $j$ and $k$ in the determinant, yielding a factor of $-1$.

\begin{defn}[Slater Determinants] With a combination $\mathbf{i} = \{i_1, i_2, \ldots, i_N\}$ of $N$ distinct elements from $\{1, 2, \ldots, 2 K \}$, we form an $N$-electron Slater determinant
\begin{equation}
\label{eqn:slater}
\psi^{\text{SL}}_\mathbf{i}(\mathbf{x}_1,\mathbf{x}_2, \ldots, \mathbf{x}_N) = \frac{1}{\sqrt{N!}}\begin{vmatrix}
\chi_{i_1}(\br_1,\sigma_1) & \chi_{i_2}(\br_1,\sigma_1) & \cdots & \chi_{i_N}(\br_1,\sigma_1) \\
\chi_{i_1}(\br_2,\sigma_2) & \chi_{i_2}(\br_2,\sigma_2) & \cdots & \chi_{i_N}(\br_2,\sigma_2) \\
\vdots & \vdots & \ddots & \vdots \\
\chi_{i_1}(\br_N,\sigma_N) & \chi_{i_2}(\br_N,\sigma_N) & \cdots & \chi_{i_N}(\br_N,\sigma_N)
\end{vmatrix}.
\end{equation}
\end{defn}
There are $\binom{2K}{N}$ possible $N$-electron Slater determinants; we use a subset of these to form CI (Configuration Interaction) basis functions.  Let $N_C$ denote the size of this subset.
\begin{defn}[CI Basis]
For $1 \leq q \leq N_C\leq \binom{2K}{N}$, let $\mathbf{i}(q)$ enumerate the combinations in a fixed subset of distinct Slater determinants.  Then the CI basis functions are
\begin{equation}
\label{eqn:CIbasis}
\Psi^\mathrm{CI}_a = \sum_{q=1}^{N_C} C_{aq} \psi^{\text{SL}}_{\mathbf{i}(q)} \qquad \text{ for } a = 1, \ldots, N_C.
\end{equation}
\end{defn}
Given an atomic orbital basis set $\{f_k\}_{k=1}^K$, in order to proceed with concrete CI basis functions (\ref{eqn:CIbasis}), we must solve numerically for the matrices $Y$ and $C$ in (\ref{eqn:MO}) and (\ref{eqn:CIbasis}).  That is, we must pin down the coefficients that express the molecular orbitals in terms of the atomic orbitals, and the coefficients that express the CI basis functions in terms of the $N$-electron Slater determinants.  In CI approaches,  one first solves the time-independent Hartree-Fock equations, a nonlinear eigenvalue problem, to determine $Y$.  The resulting $\{\phi_j\}_{j=1}^{K}$ are used to construct Slater determinants (\ref{eqn:slater}).  One then solves for $C$ such that the CI basis $\{\Psi^\mathrm{CI}_a\}_{a=1}^{N_C}$ is orthonormal and diagonalizes $\hat{H}_e$.  In contrast, there exist other approaches, such as CASSCF (complete active space self-consistent field) \cite{Roos1980}, where one solves jointly for $Y$ and $C$.  Either way, the CI basis functions (\ref{eqn:CIbasis}) are orthonormal and diagonalize the $\hat{H}_e$ operator.

In TDCI and TDCASSCF \cite{PhysRevA.88.023402,olsen1988determinant,peng2018simulating}, the time-dependent versions of CI and CASSCF, the wave function takes the following form:
\begin{equation}
\label{eqn:CIrep}
\Psi(\mathbf{x}_1, \ldots, \mathbf{x}_N, t) = \sum_{n=1}^{N_C} a_n(t) \Psi_n^{\mathrm{CI}} (\mathbf{x}_1, \ldots, \mathbf{x}_N).
\end{equation}
Even with an incomplete (i.e., finite) set of atomic orbitals, if we use all possible Slater determinants in (\ref{eqn:CIrep}), and if we then solve for the coefficients $a_n(t)$ such that $\Psi$ satsifies (\ref{eqn:tdse}), what we obtain is the exact solution of the TDSE within the finite atomic orbital basis set we chose.  In particular, this solution accounts for electron correlation, an effect that we must model if we are to use our results to improve models of the exchange-correlation potential $V_{\text{xc}}$ in TDDFT.  Repeating this procedure for increasing values of basis set size ($K$), $\Psi$ will converge to the solution of the TDSE (\ref{eqn:tdse}).

Substituting (\ref{eqn:CIrep}) into the TDSE (\ref{eqn:tdse}) and expressing the Hamiltonian operator $\hat{H}(t)$ in the CI basis $\{ \Psi_n^{\mathrm{CI}} \}_{n=1}^{N_C}$ yields an ordinary differential equation for the time-evolution of the coefficients $\ba(t) = [a_1(t), \ldots, a_{N_C}(t)]$:
\begin{equation}
\label{eqn:matrixschro}
i \frac{d}{dt} \ba(t) = (H_0 + V_{\text{ext}}(t)) \ba(t) = H(t) \ba(t).
\end{equation}
Here $H_0$ and $V_{\text{ext}}(t)$ are the matrix expressions of the corresponding operators $\hat{H}_e$ and $\hat{V}_{\text{ext}}(t)$ in the CI basis.  In this paper, we typically apply to the molecule an electric field in the $z$-direction; the corresponding potential can be written within the dipole approximation as $V_{\text{ext}}(t) = f(t) M_{\text{dip}}$, where $f(t)$ is a time-dependent field strength and $M_{\text{dip}}$ is the dipole moment matrix (in the $z$ direction).  Both $H_0$ and $M_{\text{dip}}$ will depend on the particular molecular system that we are studying.  Hence
\begin{equation}
\label{eqn:matrixham}
H(t) = H_0 + V_{\text{ext}}(t) = H_0 + f(t) M_{\text{dip}}.
\end{equation}
\subsection{Full Density Matrix}
The full density matrix associated with (\ref{eqn:CIrep}) is
\begin{equation}
\label{eqn:fulltdciden}
P(t) = \ba(t) \ba(t)^\dagger.
\end{equation}
Taking the time-derivative of both sides and using (\ref{eqn:matrixschro}), we obtain (\ref{eqn:lvn0}) with $H=H(t)$, the time-dependent Hamiltonian (\ref{eqn:matrixham}).  This shows that the $N_C \times N_C$ full TDCI density matrix $P(t)$ satisfies the Liouville-von Neumann equation with the same Hamiltonian matrix $H(t)$ used in (\ref{eqn:matrixschro}). By (\ref{eqn:fulltdciden}), $P(t)$ is Hermitian for all $t$.  As $H(t)$ does not depend on $P$, the Liouville-von Neumann equation is linear in the \emph{entries} of $P$.  For a matrix $A$ of size $M \times M$, let $\vvec(A)$ denote its vectorized or flattened representation as an $M^2 \times 1$ vector.  Then (\ref{eqn:lvn0}) can be written as
\begin{equation}
\label{eqn:superoperator}
\frac{d}{dt} \vvec(P(t)) = -i \mathcal{H}(t) \vvec(P(t)),
\end{equation}
where, using $\otimes$ to denote Kronecker product, $\mathcal{H}(t) = I \otimes H(t) - H(t)^T \otimes I$. This is the superoperator algebra formulation of the Liouville-von Neumann equation\cite{zwanzig1964identity,ohtsuki1989bath,May2023}.  Suppose we fix $\Delta t > 0$ and discretize (\ref{eqn:superoperator}) in time using the first-order scheme
\begin{equation}
\label{eqn:discsuper}
\vvec(P(t + \Delta t)) = \exp(-i \mathcal{H}(t) \Delta t) \vvec(P(t)).
\end{equation}
With $\oplus$ denoting Kronecker sum, we have
\begin{equation}
\label{eqn:kronsum}
\exp(-i \mathcal{H}(t) \Delta t) = \exp((i H(t)^T \Delta t) \oplus (-i H(t) \Delta t)) = \exp(i H(t)^T \Delta t) \otimes \exp(-i H(t) \Delta t),
\end{equation}
a unitary matrix of size $N_C^2 \times N_C^2$.  Hence we can identify $\exp(-i \mathcal{H}(t))$ with $A(t)$ in (\ref{eqn:motivlinsys}), and further identify $\vvec(P(t + \Delta t))$ and $\vvec(P(t))$ with $\mathbf{z}(t+1)$ and $\mathbf{z}(t)$, respectively.

Normalization of $\Psi$ implies $\mathbf{a}(t)^\dagger \mathbf{a}(t) = 1$ for all $t$, which then implies $\trace(P(t))=1$ for all $t$.



\section{Methodological Results}
\label{sect:methods}
\subsection{Reduced Density}
\label{sect:rdmdef}
To define the $1$-electron reduced density matrix, we must first define density operators.  Let $\mathbf{X} = (\mathbf{x}_1, \ldots, \mathbf{x}_N)$.  We use $\Psi_t(\mathbf{X})$ as equivalent notation for the wave function $\Psi(\mathbf{X},t)$ defined by (\ref{eqn:CIrep}).  Then, for each $t$, $\Psi_t$ is an element of a complex Hilbert space $\mathcal{S}$ with inner product $\langle F, G \rangle = \int \overline{F(\mathbf{X})} G(\mathbf{X}) \, d \mathbf{X}$.  For the purposes of the present study, precise identification of $\mathcal{S}$ is unimportant---all we need is the inner product.  
\begin{defn}[Full Density Operator]
\label{def:fulldenop}
For $F \in \mathcal{S}$, let $F^\ast$ denote the linear functional in the dual space $\mathcal{S}^\ast$ that acts on a function $G \in \mathcal{S}$ via $F^\ast(G) = \langle F, G \rangle$.  This is equivalent to bra-ket notation; we identify $F \in \mathcal{S}$ with $| F \rangle$, and we identify $F^\ast \in \mathcal{S}^\ast$ with $\langle F |$.  Then the full density operator $P_t : \mathcal{S} \to \mathcal{S}$ associated with the wave function (\ref{eqn:CIrep}) is
\begin{equation}
\label{eqn:fulldenop}
\mathrm{P}_t = | \Psi_t \rangle \langle \Psi_t | = \Psi_t  \Psi_t^\ast = \sum_{k,\ell} a_k(t) \overline{a_{\ell}(t)} \Psi_k^{\mathrm{CI}} (\Psi_{\ell}^{\mathrm{CI}})^\ast.
\end{equation}
If we express this operator in the orthonormal CI basis $\{ \Psi_n^{\mathrm{CI}} \}_{n=1}^{N_C}$, we obtain the full density matrix $P(t)$ defined in (\ref{eqn:fulltdciden}).
\end{defn}
To obtain the $1$-electron reduced density operator, we take the sum of $N$ integrals: for the $j$-th such integral, we integrate the full density operator over all spins and all spatial coordinates other than $\br_j$.  All of the integrals are identical due to the indistinguishability of electrons.  Thus we carry out this procedure for $j=1$ and scale the result by $N$:
\begin{defn}[Marginalization and Reduced Density Operator]
Let $\mathcal{R}$ be a complex Hilbert space of functions that depend only on $\br \in \mathbb{R}^3$.  
For the full density operator $\mathrm{P}_t$, marginalization yields a reduced density operator $\left( \mathrm{P}_t \right)_1 : \mathcal{R} \to \mathcal{R}$ defined by its action on a test function $f \in \mathcal{R}$:
\begin{equation}
\label{eqn:sub1}
\left( \mathrm{P}_t \right)_1 (f) = N \int \Psi_t(\br,\sigma_1,\mathbf{x}_2,\ldots,\mathbf{x}_n) \int_{\br'} \overline{\Psi_t(\br',\sigma_1,\mathbf{x}_2,\ldots,\mathbf{x}_n)} f(\br') \, d \br' \, d \sigma_1 d \mathbf{x}_2 \cdots d \mathbf{x}_N.
\end{equation}
Equivalently, we can first define the $1$-electron reduced  pair density function
\begin{equation}
\label{eqn:pairdensity}
\rho(\br,\br',t) = N \int \Psi_t(\br,\sigma_1,\mathbf{x}_2,\ldots,\mathbf{x}_n) \overline{\Psi_t(\br',\sigma_1,\mathbf{x}_2,\ldots,\mathbf{x}_n)}  \, d \sigma_1 d \mathbf{x}_2 \cdots d \mathbf{x}_N,
\end{equation}
in which case we can more easily express the $1$-electron reduced density operator as
\begin{equation}
\label{eqn:pairdenop}
\mathrm{Q}_t = \left( \mathrm{P}_t \right)_1 (f) = \int_{\br'} \rho(\br, \br',t) f(\br') \, d \br'.
\end{equation}
\end{defn}
  Note that the $1$-electron reduced density $\rho(\br,t)$ defined in Section \ref{sec:intro} is obtained by evaluating the pair density (\ref{eqn:pairdensity}) at $\br' = \br$.  The matrix representation of the operator (\ref{eqn:pairdenop}) induced by the pair density (\ref{eqn:pairdensity}) is the 1RDM ($1$-electron reduced density matrix) $Q(t)$. The details are collected here:
\begin{defn}[Reduced Quantities]
\label{redquan}
Consider the reduced density operator: 
\begin{equation}
\label{eqn:reduceddenop}
\mathrm{Q}_t = \left( P_t \right)_1 =  N \sum_{k,\ell} a_k(t) \overline{a_{\ell}(t)} \left( \Psi_k^\mathrm{CI} (\Psi_{\ell}^\mathrm{CI})^\ast \right)_1. 
\end{equation}
The 1RDM $Q(t)$ is the matrix expression of the operator $\mathrm{Q}_t$ in the orthonormal basis of molecular orbitals $\{\phi_j(\br)\}_{j=1}^{K}$.  Using (\ref{eqn:fulltdciden}), we obtain both the $K \times K$ matrix $Q(t)$ and the $N_c \times N_c \times K \times K$ tensor $B$:
 \begin{gather}\label{eqn:rdmdef}
     Q_{b,c}(t) = \langle \phi_b, \mathrm{Q}_t \phi_c  \rangle = \sum_{k,\ell=1}^{N_C} P_{k,\ell}(t) B_{k,\ell,b,c}, \\
     \label{eqn:Btendef}B_{k,\ell,b,c} = N \int \overline{\phi_b(\br)} \left( \Psi_k^\mathrm{CI} (\Psi_{\ell}^\mathrm{CI})^\ast \right)_1 (\br) \phi_c(\br) \, d \br,
 \end{gather}
We reshape the tensor $B$ into a matrix of size $N_C^2 \times K^2$, which we denote as $\widetilde{B}^T$.
\end{defn}
The full density operator $\mathrm{P}_t$ yields the joint probability density of finding each electron with a particular spin at a particular spatial location at time $t$.  In contrast, the reduced density operator $\mathrm{Q}_t$ yields the probability density of finding \emph{an} electron (with either spin) at a particular spatial location at time $t$. 
Knowledge of the 1RDM $Q(t)$ is often sufficient to compute quantities that can be compared against experiments.

\subsection{Computing the $B$ Tensor}
\label{sect:Btendef}
To compute $B$, we must carry out the integral in (\ref{eqn:Btendef}); in full generality, this boils down to marginalizing outer products of Slater determinants.
\begin{prop}
\label{Bten}
For any fixed choice of distinct Slater determinants, 
suppose the CI basis functions are defined by (\ref{eqn:CIbasis}).  Then for $1 \leq k, \ell \leq N_C$, we can compute the core of $B$ defined by (\ref{eqn:Btendef}).
\begin{equation}
\label{eqn:radred10}
\left( \Psi^\mathrm{CI}_k (\Psi^\mathrm{CI}_{\ell})^\ast \right)_1 = \sum_{q=1}^{N_C} \sum_{q'=1}^{N_C} C_{k q} \overline{C_{\ell q'}} \bigl( \psi^{\text{SL}}_{\mathbf{i}(q)} (\psi^{\text{SL}}_{\mathbf{i}(q')})^\ast \bigr)_1.
\end{equation}
To compute the right-hand side, there are two cases.  When $q = q'$, 
\begin{equation}
\label{eqn:radred3}
\bigl( \psi^\text{SL}_{\mathbf{i}(q)} (\psi^\text{SL}_{\mathbf{i}(q)})^\ast \bigr)_1 = \frac{1}{N} \sum_{k=1}^{N} \phi_{\lceil i_k(q)/2 \rceil} \phi_{\lceil i_k(q)/2 \rceil}^\ast 
\end{equation}
When $q \neq q'$, there exist integers $a$, $a'$, and $Z$ such that $a \neq a'$ and
\begin{equation}
\label{eqn:radred4}
\bigl( \psi^\text{SL}_{\mathbf{i}(q)} (\psi^\text{SL}_{\mathbf{i}(q')})^\ast \bigr)_1 = \frac{(-1)^Z}{N}  \phi_{\lceil a/2 \rceil} \phi_{\lceil a'/2 \rceil}^\ast.
\end{equation}
The results (\ref{eqn:radred3}-\ref{eqn:radred4}), together with (\ref{eqn:radred10}), enable a complete evaluation of $B$.
\end{prop}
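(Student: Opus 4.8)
The plan is to reduce the statement to one core computation --- the marginalization of an outer product of a single \emph{pair} of Slater determinants --- and then carry out that computation by expanding each determinant as a signed sum over permutations and invoking orthonormality of the spin-orbitals (\ref{eqn:spinorbital}). Throughout I read the operation $(\cdot)_1$ appearing in (\ref{eqn:radred3})--(\ref{eqn:radred4}) as the partial trace over electrons $2,\dots,N$ and the shared spin variable, with the overall factor $N$ carried outside as in (\ref{eqn:reduceddenop}); this is what makes the prefactors $1/N$ there consistent with normalization.

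First I would substitute the expansion (\ref{eqn:CIbasis}) of $\Psi^{\mathrm{CI}}_k$ and $\Psi^{\mathrm{CI}}_\ell$ into the outer product $\Psi^{\mathrm{CI}}_k(\Psi^{\mathrm{CI}}_\ell)^\ast$: since the outer product is bilinear and the marginalization map is linear, (\ref{eqn:radred10}) drops out immediately, and the task becomes evaluating $\bigl(\psi^{\text{SL}}_{\mathbf{i}(q)}(\psi^{\text{SL}}_{\mathbf{i}(q')})^\ast\bigr)_1$ for two arbitrary ordered index sets, which I abbreviate $\mathbf{i}=\{i_1<\dots<i_N\}$ and $\mathbf{j}=\{j_1<\dots<j_N\}$. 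For this I would write $\psi^{\text{SL}}_{\mathbf{i}}=(N!)^{-1/2}\sum_{\pi\in S_N}\operatorname{sgn}(\pi)\prod_{m=1}^{N}\chi_{i_{\pi(m)}}(\mathbf{x}_m)$, likewise for $\mathbf{j}$ with a permutation $\tau$, insert both into the marginalization integral, and integrate out $\mathbf{x}_2,\dots,\mathbf{x}_N$ together with the shared spin coordinate $\sigma_1$, keeping the spatial variables $\br$ from the first determinant and $\br'$ from the conjugated second. Each $(\pi,\tau)$-term factorizes over $\mathbf{x}_2,\dots,\mathbf{x}_N$, so orthonormality collapses those integrals to $\prod_{m=2}^{N}\delta_{i_{\pi(m)},\,j_{\tau(m)}}$; the $\sigma_1$ integral produces the spin overlap $\langle s_{j_{\tau(1)}},s_{i_{\pi(1)}}\rangle$, equal to $1$ when the two spin-orbitals carry the same spin and $0$ otherwise by (\ref{eqn:spinorbital}); and the $m=1$ factor leaves $\phi_{\lceil i_{\pi(1)}/2\rceil}(\br)\,\overline{\phi_{\lceil j_{\tau(1)}/2\rceil}(\br')}$. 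Hence a pair $(\pi,\tau)$ survives only if deleting $\pi$'s slot-$1$ index from $\mathbf{i}$ leaves exactly the set obtained by deleting $\tau$'s slot-$1$ index from $\mathbf{j}$, with the two induced orderings agreeing slot by slot.

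Next I would run the case analysis on $|\mathbf{i}\cap\mathbf{j}|$. If $\mathbf{i}=\mathbf{j}$, set equality forces $\pi(1)=\tau(1)$, hence $\pi=\tau$ and spin overlap $1$; summing the $(N-1)!$ permutations with a fixed slot-$1$ image, using $\operatorname{sgn}(\pi)^2=1$ and the $1/N!$ prefactor, reproduces (\ref{eqn:radred3}). If $\mathbf{i}$ and $\mathbf{j}$ differ in exactly one index, $\pi$ must place the unique $p\in\mathbf{i}\setminus\mathbf{j}$ in slot $1$ and $\tau$ the unique $r\in\mathbf{j}\setminus\mathbf{i}$ (so the resulting $a:=p$ and $a':=r$ are automatically distinct), and for each ordering of the common set $\mathbf{i}\cap\mathbf{j}$ there is exactly one compatible pair $(\pi,\tau)$; the sign works out to $\operatorname{sgn}(\pi)\operatorname{sgn}(\tau)=(-1)^{P-1}(-1)^{R-1}\operatorname{sgn}(\omega)^2=(-1)^{P+R}$, with $P,R$ the positions of $p,r$ in the sorted lists and $\omega$ the common permutation of the remaining $N-1$ slots, so that with $Z=P+R$ the $(N-1)!$ terms collapse to (\ref{eqn:radred4}) --- which vanishes unless $p$ and $r$ carry equal spin. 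If $\mathbf{i}$ and $\mathbf{j}$ differ in two or more indices, no pair $(\pi,\tau)$ survives and the core is $0$. Inserting these three outcomes into (\ref{eqn:radred10}), and then pairing with $\phi_b,\phi_c$ as in (\ref{eqn:rdmdef}) --- orthonormality of the $\phi_j$ reducing each $\phi_{\lceil a/2\rceil}\phi_{\lceil a'/2\rceil}^\ast$ contribution to Kronecker deltas in $b$ and $c$ --- yields every entry of the tensor $B$.

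The hard part will be the sign bookkeeping in the single-difference case: showing that $\operatorname{sgn}(\pi)\operatorname{sgn}(\tau)$ is independent of which ordering of the common occupied set is summed over, so the $(N-1)!$ contributions share one phase $(-1)^Z$ and merge with $1/N!$ into the clean $1/N$ of (\ref{eqn:radred4}). My argument would fix a reference ordering, giving permutations $\pi_0,\tau_0$, and note that every other ordering is reached by post-composing \emph{both} $\pi_0$ and $\tau_0$ with the same permutation $\omega$ of the last $N-1$ slots (because the common set appears in increasing order inside both $\mathbf{i}$ and $\mathbf{j}$), so the $\omega$-dependent signs cancel in the product and the residual phase just counts the common-set indices lying before $p$ in $\mathbf{i}$ plus those before $r$ in $\mathbf{j}$. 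A secondary subtlety is tracking the spin overlap $\langle s_{j_{\tau(1)}},s_{i_{\pi(1)}}\rangle$: it makes the $q\neq q'$ core vanish whenever the differing spin-orbitals have opposite spin, a case the statement subsumes since it asserts only the \emph{existence} of $a,a',Z$ realizing the displayed form in the nonzero case.
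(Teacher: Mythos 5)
Your proposal is correct and follows essentially the same route as the paper's proof: expand each Slater determinant as a signed sum over permutations, integrate out $\mathbf{x}_2,\ldots,\mathbf{x}_N$ and the shared spin coordinate using orthonormality of the spin-orbitals, and then split into the cases where the index sets coincide, differ in exactly one slot (with the $(N-1)!$ surviving permutation pairs sharing a single phase $(-1)^Z$), or differ in two or more slots (giving zero). Your explicit $(-1)^{P+R}$ sign bookkeeping and your handling of the opposite-spin vanishing case match the paper's $\epsilon_{a,a'}$ and flip-count definition of $Z$, so there is nothing substantive to add.
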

For the proof, see Appendix \ref{sect:proofBten}.  Using Proposition \ref{Bten}, we have developed Python code that computes the $B$ tensor.  This code takes as input the results of a static (time-independent) CI calculation: the coefficients $Y$ in (\ref{eqn:MO}), the coefficients $C$ in (\ref{eqn:CIbasis}), and the enumeration of Slater determinants $\mathbf{i}(q)$ in (\ref{eqn:CIbasis}).

\subsection{Propagating 1RDMs}
\label{sect:qcderiv}
Equipped with the $B$ tensor, we can propagate $1$-electron reduced densities forward in time, as summarized in the next result.  As in (\ref{eqn:Mdef2}), we formulate this for the case of a general stride $k \geq 1$.
\begin{prop}
\label{Qprop}
Let $k \geq 1$ be the stride.  With the Hamiltonian $H(t)$ from (\ref{eqn:matrixham}) and $\widetilde{B}^T$ from the end of Definition \ref{redquan}, set
\begin{subequations}
\label{eqn:CjAj}
\begin{align}
\label{eqn:Bsubj}
\mathcal{D}_j &= \widetilde{B}  \left( C_{j k}(t)^T \otimes A_{j k}(t) \right) \\
\label{eqn:cj}
    C_m(t) &=   \exp(-i H(t - \Delta t) \Delta t) \exp(-i H(t - 2 \Delta t) \Delta t) \cdots \exp(-i H(t - m \Delta t) \Delta t) \\
\label{eqn:aj}
    A_m(t) &= \exp(i H(t - m \Delta t) \Delta t) \cdots  \exp(i H(t - 2 \Delta t) \Delta t) \exp(i H(t - \Delta t) \Delta t) \\
\label{eqn:Mtql}
M(t) &= \begin{bmatrix}
\widetilde{B} \\
\mathcal{D}_1 \\
\vdots \\
\mathcal{D}_{\ell}
\end{bmatrix} \qquad \text{ and } \qquad
\mathbf{q}_{\ell}(t) = \begin{bmatrix}
\vvec(Q(t)) \\
\vvec(Q(t - k \Delta t)) \\
\vdots \\
\vvec(Q(t - \ell k \Delta t))
\end{bmatrix}.
\end{align}
\end{subequations}
Suppose that $P(t)$ evolves forward in time according to (\ref{eqn:discsuper}) and (\ref{eqn:kronsum}).  If $M(t)$ achieves full column rank, then the 1RDM $Q(t)$ satisfies the delay equation
\begin{equation}
\label{eqn:Qprop}
\vvec(Q(t+1)) =  \widetilde{B} \left( \exp(i H(t)^T \Delta t) \otimes \exp(-i H(t) \Delta t) \right) M(t)^{+} \mathbf{q}_{\ell}(t).
\end{equation}
Here $M(t)^{+}$ denotes the pseudoinverse of $M(t)$.
\end{prop}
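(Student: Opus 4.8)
The plan is to recognize Proposition~\ref{Qprop} as the concrete realization, in the TDCI setting, of the abstract time-delay construction~(\ref{eqn:propy})--(\ref{eqn:Mdef2}) of Section~\ref{sect:yprop}, under the identifications $\mathbf{z}(t)\leftrightarrow\vvec(P(t))$, $A(t)\leftrightarrow\exp(iH(t)^T\Delta t)\otimes\exp(-iH(t)\Delta t)$ (the unitary propagator isolated in~(\ref{eqn:discsuper})--(\ref{eqn:kronsum})), $R\leftrightarrow\widetilde{B}$, and $\mathbf{y}(t)\leftrightarrow\vvec(Q(t))$, so that here $m=K^2$ and $n=N_C^2$. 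The first step is to record the reduction identity $\vvec(Q(t))=\widetilde{B}\,\vvec(P(t))$, which is simply the vectorized form of~(\ref{eqn:rdmdef}) together with the reshaping of $B$ into $\widetilde{B}^T$ from the end of Definition~\ref{redquan}.

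The second step, which carries essentially all of the work, is to verify that each backward block of $M(t)$ equals $\mathcal{D}_j=\widetilde{B}\,(C_{jk}(t)^T\otimes A_{jk}(t))$. Iterating~(\ref{eqn:discsuper}) backward $m$ steps gives $\vvec(P(t-m\Delta t))=A(t-m\Delta t)^{-1}A(t-(m-1)\Delta t)^{-1}\cdots A(t-\Delta t)^{-1}\,\vvec(P(t))$, with the oldest factor on the left. Each $H(s)$ is Hermitian, hence so is $H(s)^T$, so using $(\exp M)^\dagger=\exp(M^\dagger)$ and $(X\otimes Y)^\dagger=X^\dagger\otimes Y^\dagger$ one obtains $A(s)^{-1}=A(s)^\dagger=\exp(-iH(s)^T\Delta t)\otimes\exp(iH(s)\Delta t)$. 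Applying the Kronecker mixed-product rule $(X_1\otimes Y_1)(X_2\otimes Y_2)=(X_1X_2)\otimes(Y_1Y_2)$ repeatedly collapses the product of these backward propagators into a single Kronecker product; the right factor is then visibly $A_m(t)$ as in~(\ref{eqn:aj}), and the left factor, after using $(\exp M)^T=\exp(M^T)$ and $(XY)^T=Y^TX^T$ to reverse the order of the product, is $C_m(t)^T$ as in~(\ref{eqn:cj}). Thus $\vvec(P(t-m\Delta t))=(C_m(t)^T\otimes A_m(t))\,\vvec(P(t))$; setting $m=jk$ and left-multiplying by $\widetilde{B}$ gives $\vvec(Q(t-jk\Delta t))=\mathcal{D}_j\,\vvec(P(t))$, with $\mathcal{D}_0=\widetilde{B}$ since $C_0(t)=A_0(t)=I$. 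Stacking over $j=0,\dots,\ell$ yields $\mathbf{q}_\ell(t)=M(t)\,\vvec(P(t))$ with $M(t)$ as in~(\ref{eqn:Mtql}).

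The third step closes the recursion: if $M(t)$ has full column rank $N_C^2$, its pseudoinverse is a one-sided left inverse, $M(t)^{+}M(t)=I$, so $\vvec(P(t))=M(t)^{+}\mathbf{q}_\ell(t)$. Advancing one step with~(\ref{eqn:discsuper})--(\ref{eqn:kronsum}) and applying the reduction identity once more gives $\vvec(Q(t+1))=\widetilde{B}\,(\exp(iH(t)^T\Delta t)\otimes\exp(-iH(t)\Delta t))\,\vvec(P(t))$; substituting the previous line produces exactly~(\ref{eqn:Qprop}). The main obstacle is not conceptual but bookkeeping: because the Hamiltonians $H(t-\Delta t),H(t-2\Delta t),\dots$ at distinct times need not commute, one must track the order of the exponential factors with care both when telescoping the backward recursion and when distributing the transpose across the product defining $C_m(t)^T$ — reversing that order is the easy error to make, and getting it right is precisely what forces the specific orderings chosen in~(\ref{eqn:cj})--(\ref{eqn:aj}).
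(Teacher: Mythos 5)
Your proposal is correct and follows essentially the same route as the paper's proof: establish the vectorized reduction identity $\vvec(Q(t))=\widetilde{B}\,\vvec(P(t))$, invert and iterate the discrete propagator (\ref{eqn:discsuper})--(\ref{eqn:kronsum}) to obtain $(C_m(t)^T\otimes A_m(t))\vvec(P(t))=\vvec(P(t-m\Delta t))$, left-multiply by $\widetilde{B}$ and stack to form $M(t)\vvec(P(t))=\mathbf{q}_{\ell}(t)$, then apply the left pseudoinverse and advance one step. Your version merely spells out the Kronecker mixed-product and transpose-ordering bookkeeping that the paper compresses into ``inverting, iterating, and using properties of the Kronecker product,'' and that bookkeeping is carried out correctly.
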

For the proof, see Appendix \ref{sect:Qpropproof}.  The basic idea behind (\ref{eqn:Qprop}) is to use $M(t)^{+} \mathbf{q}_{\ell}(t)$ to reconstruct/approximate the full density $\vvec{P(t)}$.  We propagate this forward in time using the Kronecker product expression, and then project back down to a reduced density matrix via $\widetilde{B}$.

In practice, we use an approximate pseudoinverse computed via the SVD $M(t) = \mathcal{U} \Sigma \mathcal{V}^\dagger$.  All matrices on the right-hand side depend on $t$, but we omit the time-dependence for simplicity.  Here $\Sigma$ is an $(\ell+1)K^2 \times N_C^2$ real matrix whose $N_C^2$ diagonal entries are the singular values of $M(t)$.  We take the singular values (all of which are nonnegative) to be listed in descending order, so that $\Sigma_{11}$ is the largest.  From this, we form an $N_C^2 \times (\ell+1)K^2$ matrix $\Sigma^{+}$ such that for all $1 \leq j \leq N_C^2$,
 \begin{equation}
 \label{eqn:sigreltol}
 \Sigma^{+}_{jj} = \begin{cases} 1/\Sigma_{jj} & \Sigma_{jj} > r_{\text{tol}} \Sigma_{11} \\
  0 & \text{otherwise}. \end{cases}
  \end{equation}
Here $r_{\text{tol}}$ is a user-defined relative tolerance, which we set to $10^{-12}$ in this paper.
All off-diagonal entries of $\Sigma$ and $\Sigma^{+}$ are zero.  We then set $M(t)^{+} = \mathcal{V} \Sigma^{+} \mathcal{U}^\dagger$.
 
We expect that memory ($\ell > 0$) is needed to capture the effect of the electron-electron terms in the Hamiltonian (\ref{eqn:molham}).  Suppose we switch off such terms and use a Hamiltonian that only contains one-electron terms such as the kinetic and electron-nuclear operators from (\ref{eqn:molham}) together with dipole moment operators.  Then we expect that the memoryless ($\ell=0$) case of (\ref{eqn:Qprop}) will propagate 1RDMs exactly.  In Appendix \ref{sect:oneelectron}, for the $\htwo$ and $\heh$ molecular systems studied in this paper, we carry out detailed calculations that prove this.

Returning to the setting where we retain all terms in the Hamiltonian (\ref{eqn:molham}), we find that our results improve considerably if we take additional steps to properly account for underlying physics in the form of known symmetries and constraints.  

\subsection{Incorporating Symmetries and Constraints}
\label{sect:symm}
The crux of the method presented above is the solution of the linear system (\ref{eqn:bigsys}) which aims to reconstruct $P(t)$ from present and past 1RDMs.  Though we have mentioned that $P(t)$ is always Hermitian and satisfies $\trace(P(t)) = 1$, we have not yet made use of these symmetries/constraints.  

In Figure \ref{fig:heh+_sto-3g_td_coeffs},  the second time-dependent coefficient (labeled $a_1(t)$ in the plot) is identically zero for \heh in STO-3G.  By (\ref{eqn:fulltdciden}), this implies that the second row and second column of $P(t)$ will be identically zero.  We can view this as an additional set of constraints on $P(t)$.

In Propositions \ref{Hermitian}, \ref{traceconstr}, and \ref{identzero}, we make use of, respectively, the Hermitian symmetry of $P(t)$, constant trace of $P(t)$, and identically zero elements in $P(t)$.  Each allows us to reduce the dimensionality of the linear system (\ref{eqn:bigsys}) that must be solved at each time step of (\ref{eqn:Qprop}).  Combining these results, we obtain the reduced-dimensional system (\ref{eqn:Qpropsymm}), which automatically preserves symmetries/constraints.

Recall (\ref{eqn:fulltdciden}), which implies that $P(t)$ is Hermitian for all $t$.  Thus for $i \neq j$, it is redundant to solve separately for $P_{ij}$ and $P_{ji}$, as we do in (\ref{eqn:approxinversion}).  This motivates replacing $\vvec(P(t))$ in (\ref{eqn:approxinversion}) with a representation involving only the upper-triangular (including diagonal) entries.
\begin{defn}[Basis for Space of Hermitian Matrices]
\label{Hbasis}
Let $\mathbb{H}$ be the vector space of $N_C \times N_C$ Hermitian matrices. For $j=1$ to $j=N_C$, let the matrix $S^j$ be purely diagonal with $S^j_{kk} = \delta_{jk}$.  Now let $\boldsymbol{\tau}(j) = (\tau_1(j),\tau_2(j))$ be the $j$-th off-diagonal, strictly upper-triangular indicial pair, i.e., the $j$-th element of
$(1,2), \ldots, (1,N_C), (2,3), \ldots, (2,N_C), \ldots, (N_C-1,N_C)$.
For $j=1$ to $j=N_C(N_C-1)/2$, let $S^{N_C+j}$ be one at indices $(\tau_1(j),\tau_2(j))$ and $(\tau_2(j),\tau_1(j))$; zero otherwise; also, let $S^{N_C(N_C+1)/2 + j}$ be $i$ and $-i$ at indices $(\tau_1(j),\tau_2(j))$ and $(\tau_2(j),\tau_1(j))$, respectively; zero otherwise.
\end{defn}
For example, when $N_C=2$, we obtain the following basis of $\mathbb{H}$:
\[
S = \{S^1, S^2, S^3, S^4\} = \left\{ \begin{bmatrix} 1 & 0 \\ 0 & 0 \end{bmatrix},
\begin{bmatrix} 0 & 0 \\ 0 & 1 \end{bmatrix},
\begin{bmatrix} 0 & 1 \\ 1 & 0 \end{bmatrix},
\begin{bmatrix} 0 & i \\ -i & 0 \end{bmatrix} \right\}.
\]
We now present a sequence of results that aim to modify the raw scheme (\ref{eqn:Qprop}) to account for known symmetries and constraints.  All proofs in this subsection are deferred to Appendix \ref{sect:SymmConstrProofs}.
\begin{lem}
\label{HermLem}
$S = \{S^j\}_{j=1}^{N_C^2}$ is a basis for $\mathbb{H}$.
\end{lem}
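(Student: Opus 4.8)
The plan is to leverage a dimension count: regarded as a vector space over $\mathbb{R}$, the space $\mathbb{H}$ of $N_C \times N_C$ Hermitian matrices has dimension exactly $N_C^2$, since each of the $N_C$ diagonal entries of a Hermitian matrix is a single real parameter, each of the $N_C(N_C-1)/2$ strictly upper-triangular entries is a complex parameter (two real degrees of freedom), and the strictly lower triangle is then fixed by $A_{ji} = \overline{A_{ij}}$; indeed $N_C + 2\cdot\tfrac{N_C(N_C-1)}{2} = N_C^2$. The list $S$ in Definition~\ref{Hbasis} likewise contains exactly $N_C^2$ matrices: $N_C$ diagonal ones, then $N_C(N_C-1)/2$ real-symmetric ones, then $N_C(N_C-1)/2$ imaginary-antisymmetric ones, and the index offsets $N_C$ and $N_C(N_C+1)/2$ are mutually consistent because $N_C + \tfrac{N_C(N_C-1)}{2} = \tfrac{N_C(N_C+1)}{2}$ and $\tfrac{N_C(N_C+1)}{2} + \tfrac{N_C(N_C-1)}{2} = N_C^2$. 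Hence it is enough to prove $S \subseteq \mathbb{H}$ together with either spanning or linear independence over $\mathbb{R}$.

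First I would verify $S \subseteq \mathbb{H}$. The matrices $S^1,\dots,S^{N_C}$ are real and diagonal, hence Hermitian. Each $S^{N_C+j}$ is real and symmetric (it carries a $1$ at both $(\tau_1(j),\tau_2(j))$ and $(\tau_2(j),\tau_1(j))$), hence Hermitian. Each $S^{N_C(N_C+1)/2+j}$ carries $i$ at $(\tau_1(j),\tau_2(j))$ and $-i$ at $(\tau_2(j),\tau_1(j))$; conjugate-transposing swaps those two positions and conjugates the entries, returning the same matrix, so it is Hermitian as well.

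Next I would establish linear independence over $\mathbb{R}$. Suppose $\sum_{j=1}^{N_C^2} c_j S^j = 0$ with every $c_j \in \mathbb{R}$. Reading off the $(k,k)$ entry, only $S^k$ has support on the diagonal, so $c_k = 0$ for $k = 1,\dots,N_C$. Reading off the $(\tau_1(j),\tau_2(j))$ entry for $j = 1,\dots,N_C(N_C-1)/2$, the only contributions come from $S^{N_C+j}$ (contributing $c_{N_C+j}$) and from $S^{N_C(N_C+1)/2+j}$ (contributing $i\,c_{N_C(N_C+1)/2+j}$); since both coefficients are real, $c_{N_C+j} + i\,c_{N_C(N_C+1)/2+j} = 0$ forces each to vanish. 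Thus all $c_j = 0$, and a set of $N_C^2$ linearly independent vectors in the $N_C^2$-dimensional space $\mathbb{H}$ is a basis. Equivalently, one could bypass independence and check spanning directly: for Hermitian $A$, write $A_{\tau_1(j)\tau_2(j)} = x_j + i y_j$ with $x_j,y_j\in\mathbb{R}$ and confirm $\sum_{k} A_{kk} S^k + \sum_j \bigl( x_j S^{N_C+j} + y_j S^{N_C(N_C+1)/2+j} \bigr) = A$, where the lower triangle matches because $A_{\tau_2\tau_1} = \overline{A_{\tau_1\tau_2}}$.

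I do not anticipate a genuine obstacle; the proof is routine linear algebra. The only point needing care is index bookkeeping: one must confirm that the enumeration $\boldsymbol{\tau}(1),\dots,\boldsymbol{\tau}(N_C(N_C-1)/2)$ lists every strictly-upper-triangular position exactly once, and that the three index blocks $\{1,\dots,N_C\}$, $\{N_C+1,\dots,\tfrac{N_C(N_C+1)}{2}\}$, $\{\tfrac{N_C(N_C+1)}{2}+1,\dots,N_C^2\}$ partition $\{1,\dots,N_C^2\}$; once this is in place the entrywise reading-off argument is immediate.
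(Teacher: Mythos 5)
Your proposal is correct and follows essentially the same route as the paper: verify the $N_C^2$ listed matrices are Hermitian and linearly independent (the paper uses Frobenius orthogonality where you read off entries, an immaterial difference), then conclude via the dimension count or, equivalently, by exhibiting the expansion coefficients of an arbitrary Hermitian matrix exactly as the paper does.
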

\begin{prop}
\label{Hermitian}
There exists an $N_C^2 \times N_C^2$ matrix $\widetilde{S}$ such that (\ref{eqn:bigsys}) is equivalent to
\begin{equation}
\label{eqn:bigsys2}
M(t) \widetilde{S} \mathbf{x}(t) = \mathbf{q}_{\ell}(t).
\end{equation}
Hence we need only solve for $\mathbf{x}(t) \in \mathbb{R}^{N_C^2}$, rather than solving (\ref{eqn:bigsys}) for $\vvec(P(t)) \in \mathbb{C}^{N_C^2}$.
\end{prop}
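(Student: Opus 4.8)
The plan is to replace the unknown $\vvec(P(t)) \in \mathbb{C}^{N_C^2}$ in (\ref{eqn:bigsys}) by a faithful \emph{real} parametrization that builds in Hermitian symmetry from the outset. The starting observation is that the reconstruction target $P(t)$ is always Hermitian, which is immediate from (\ref{eqn:fulltdciden}); hence $P(t) \in \mathbb{H}$, and by Lemma \ref{HermLem} there are unique real coordinates $x_1(t),\dots,x_{N_C^2}(t)$ with $P(t) = \sum_{j=1}^{N_C^2} x_j(t)\, S^j$.

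First I would vectorize this expansion. Define $\widetilde{S}$ to be the $N_C^2 \times N_C^2$ complex matrix whose $j$-th column is $\vvec(S^j)$; it has the size advertised in the statement because $\dim_{\mathbb{R}}\mathbb{H} = N_C^2$ by Lemma \ref{HermLem} and each $\vvec(S^j) \in \mathbb{C}^{N_C^2}$. Then $\vvec(P(t)) = \sum_{j} x_j(t)\,\vvec(S^j) = \widetilde{S}\,\mathbf{x}(t)$ with $\mathbf{x}(t) = (x_1(t),\dots,x_{N_C^2}(t))^T \in \mathbb{R}^{N_C^2}$, and substituting this identity into (\ref{eqn:bigsys}) turns it verbatim into (\ref{eqn:bigsys2}).

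For the equivalence I would check that $\mathbf{x} \mapsto \widetilde{S}\mathbf{x}$ is a linear bijection from $\mathbb{R}^{N_C^2}$ onto $\vvec(\mathbb{H})$: surjectivity is the spanning half of Lemma \ref{HermLem}, and injectivity is the linear-independence half --- concretely, reading off the diagonal vectorized entries forces the diagonal coefficients to vanish, and reading off the entries at an index pair $(\tau_1(j),\tau_2(j))$ together with its transpose $(\tau_2(j),\tau_1(j))$ separates the symmetric real coefficient from the antisymmetric imaginary one. Since the only solution of (\ref{eqn:bigsys}) that matters --- the Hermitian matrix $P(t)$ it is designed to recover --- lies in $\vvec(\mathbb{H})$, this bijection lets us pass between (\ref{eqn:bigsys}), restricted to its Hermitian solution, and (\ref{eqn:bigsys2}) without loss, while cutting the number of real unknowns from $2N_C^2$ to $N_C^2$.

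I do not expect a genuine obstacle here; the one point worth stating carefully is what ``equivalent'' means, since $M(t)$ need not be injective and (\ref{eqn:bigsys}) read over all of $\mathbb{C}^{N_C^2}$ admits more solutions than (\ref{eqn:bigsys2}). The clean formulation, which I would make explicit, is that (\ref{eqn:bigsys}) --- understood as a search over Hermitian $P(t)$, the only regime in which the reconstruction of Proposition \ref{Qprop} uses it --- is equivalent, under the change of variables $\vvec(P(t)) = \widetilde{S}\mathbf{x}(t)$, to (\ref{eqn:bigsys2}).
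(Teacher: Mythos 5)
Your proposal is correct and follows essentially the same route as the paper: define $\widetilde{S}$ columnwise via $\widetilde{S}_{:,j} = \vvec(S^j)$, invoke Lemma \ref{HermLem} to write $\vvec(P(t)) = \widetilde{S}\mathbf{x}(t)$ for real $\mathbf{x}(t)$, and substitute into (\ref{eqn:bigsys}). Your explicit remark on what ``equivalent'' means (restricting to Hermitian solutions) is a reasonable clarification that the paper leaves implicit, but it does not change the argument.
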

\begin{prop}[Preserving Constant Trace]
\label{traceconstr}
Accounting for $\trace(P(t))=1$, the linear system (\ref{eqn:bigsys2}) can be expressed equivalently as
\begin{equation}
\label{eqn:bigsys3}
M'(t) \mathbf{x}_{-N_C}(t) = \mathbf{q}_{\ell}(t) - [M(t) \widetilde{S}]_{:, N_C},
\end{equation}
where $M'(t)$ is of size $(\ell+1) K^2 \times (N_C^2 - 1)$ and $\mathbf{x}_{-N_C}(t) \in \mathbb{R}^{N_C^2 - 1}$ denotes the result of deleting the $N_C$-th entry from $\mathbf{x}(t)$.
\end{prop}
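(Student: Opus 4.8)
The plan is to eliminate one coordinate from the Hermitian-basis representation of (\ref{eqn:bigsys2}) by converting the scalar constraint $\trace(P(t)) = 1$ into an explicit formula for that coordinate. The key structural observation comes from the ordering of the basis $S = \{S^j\}_{j=1}^{N_C^2}$ in Definition \ref{Hbasis}: the first $N_C$ matrices $S^1,\dots,S^{N_C}$ are the diagonal matrices with a single nonzero entry (equal to $1$) on the diagonal, so each has trace $1$, whereas every remaining basis matrix $S^{N_C+1},\dots,S^{N_C^2}$ is strictly off-diagonal and therefore traceless. Taking $\widetilde{S}$ from Proposition \ref{Hermitian} to be the matrix whose $j$-th column is $\vvec(S^j)$, so that $\vvec(P(t)) = \widetilde{S}\,\mathbf{x}(t)$, linearity of the trace gives $\trace(P(t)) = \sum_{j=1}^{N_C^2} x_j(t)\,\trace(S^j) = \sum_{j=1}^{N_C} x_j(t)$. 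Hence $\trace(P(t))=1$ is equivalent to $x_{N_C}(t) = 1 - \sum_{j=1}^{N_C-1} x_j(t)$.

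The next step is to substitute this relation into (\ref{eqn:bigsys2}). Writing the columns of $M(t)\widetilde{S}$ as $\mathbf{m}_1,\dots,\mathbf{m}_{N_C^2}$, equation (\ref{eqn:bigsys2}) reads $\sum_{j=1}^{N_C^2} x_j(t)\,\mathbf{m}_j = \mathbf{q}_\ell(t)$. Replacing $x_{N_C}(t)$ by $1 - \sum_{j=1}^{N_C-1} x_j(t)$ and moving the resulting constant vector to the right-hand side yields
\[
\sum_{j=1}^{N_C-1} x_j(t)\,(\mathbf{m}_j - \mathbf{m}_{N_C}) + \sum_{j=N_C+1}^{N_C^2} x_j(t)\,\mathbf{m}_j = \mathbf{q}_\ell(t) - \mathbf{m}_{N_C}.
\]
Since $\mathbf{m}_{N_C} = [M(t)\widetilde{S}]_{:,N_C}$, the right-hand side is exactly $\mathbf{q}_\ell(t) - [M(t)\widetilde{S}]_{:,N_C}$. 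Defining $M'(t)$ to be the matrix with columns $\mathbf{m}_1 - \mathbf{m}_{N_C},\dots,\mathbf{m}_{N_C-1} - \mathbf{m}_{N_C},\mathbf{m}_{N_C+1},\dots,\mathbf{m}_{N_C^2}$ (equivalently, subtract column $N_C$ of $M(t)\widetilde{S}$ from each of its first $N_C-1$ columns, then delete column $N_C$), and letting $\mathbf{x}_{-N_C}(t)$ denote $\mathbf{x}(t)$ with its $N_C$-th entry removed, this display is precisely (\ref{eqn:bigsys3}). The stated size $(\ell+1)K^2 \times (N_C^2 - 1)$ follows because $M(t)$ has $(\ell+1)K^2$ rows and $N_C^2$ columns while $\widetilde{S}$ is $N_C^2 \times N_C^2$, so $M(t)\widetilde{S}$ has $(\ell+1)K^2$ rows and $N_C^2$ columns, one of which is removed.

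Finally I would record the two-way equivalence asserted in the proposition. If $\mathbf{x}(t)$ solves (\ref{eqn:bigsys2}) and the associated $P(t)$ satisfies $\trace(P(t))=1$, then by the first step $x_{N_C}(t) = 1 - \sum_{j<N_C} x_j(t)$, and the algebra above shows that the truncated vector $\mathbf{x}_{-N_C}(t)$ solves (\ref{eqn:bigsys3}); conversely, any solution of (\ref{eqn:bigsys3}) extends, by setting $x_{N_C}(t) := 1 - \sum_{j<N_C} x_j(t)$, to a solution of (\ref{eqn:bigsys2}) whose associated $P(t)$ is Hermitian (by Proposition \ref{Hermitian}) and has unit trace (by construction). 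There is no substantive obstacle here; the only points requiring care are confirming that the particular $\widetilde{S}$ constructed in the proof of Proposition \ref{Hermitian} really does concentrate the entire trace in the first $N_C$ coordinates --- which is forced by the basis ordering in Definition \ref{Hbasis} --- and checking that the column operations relating $M(t)\widetilde{S}$ to $M'(t)$ do not alter the solution set once the trace constraint has been imposed.
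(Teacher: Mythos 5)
Your proof is correct and follows essentially the same route as the paper's: convert $\trace(P(t))=1$ into the affine relation $x_{N_C}(t) = 1 - \sum_{j<N_C} x_j(t)$ using the fact that only the first $N_C$ basis matrices carry trace, substitute into (\ref{eqn:bigsys2}), and absorb the resulting constant column into the right-hand side while forming $M'(t)$ by the stated column operations on $M(t)\widetilde{S}$. Your added remarks on the two-way equivalence and on why the trace concentrates in the first $N_C$ coordinates are slightly more explicit than the paper's wording but do not constitute a different argument.
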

\begin{prop}[Eliminating Zero Elements of the Full Density Matrix]
\label{identzero}
For a given molecular system, if we know \emph{a priori} (e.g., based on symmetries of the molecule) that $D$ entries of $P(t)$ will be zero, then (\ref{eqn:bigsys3}) is equivalent to
\begin{equation}
\label{eqn:bigsys4}
M''(t) \mathbf{x}_{-}(t) = \mathbf{q}_{\ell}(t) - [M(t) \widetilde{S}]_{:, N_C} =: \mathbf{b}_{\ell}(t),
\end{equation}
where $M''(t)$ has shape $(\ell+1)K^2 \times (N_C^2 - 1 - D)$ and $\mathbf{x}_{-}(t) \in \mathbb{R}^{N_C^2 - 1 - D}$.
\end{prop}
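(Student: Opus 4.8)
The plan is to push through the same coordinate-reduction idea used in Propositions~\ref{Hermitian} and~\ref{traceconstr}, now additionally discarding the coordinates that a symmetry forces to vanish. First I would recall the setup from those results and from Lemma~\ref{HermLem}: since $P(t)$ is Hermitian by (\ref{eqn:fulltdciden}), we may write $\vvec(P(t)) = \widetilde{S}\,\mathbf{x}(t)$, where $\mathbf{x}(t)\in\mathbb{R}^{N_C^2}$ is the coordinate vector of $P(t)$ in the real basis $S$ of Definition~\ref{Hbasis}, and imposing $\trace(P(t))=1$ eliminates one (diagonal) coordinate to yield (\ref{eqn:bigsys3}), in which $\mathbf{x}_{-N_C}(t)\in\mathbb{R}^{N_C^2-1}$ holds the surviving coordinates and the columns of $M'(t)$ are the matching (trace-shifted) columns of $M(t)\widetilde{S}$. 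Because $\trace(P(t))=1\neq 0$, at least one diagonal entry of $P(t)$ is not identically zero, so after a harmless relabeling I may assume the coordinate eliminated in Proposition~\ref{traceconstr} is such an entry; this prevents it from colliding with the symmetry-forced zeros treated below.

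Next I would convert the hypothesis ``$D$ entries of $P(t)$ vanish'' into ``$D$ entries of $\mathbf{x}_{-N_C}(t)$ vanish identically.'' Because $P(t)$ is Hermitian, the set $\mathcal{Z}$ of index pairs whose entries are known a priori to be zero (e.g.\ because the corresponding CI amplitudes in (\ref{eqn:CIrep}) stay zero under the molecule's selection rules, so that those rows/columns of $P(t)=\mathbf{a}(t)\mathbf{a}(t)^\dagger$ are zero for all $t$) may be taken closed under transpose, with $D = |\mathcal{Z}|$. Reading off Definition~\ref{Hbasis}: a diagonal member $(k,k)\in\mathcal{Z}$ forces the coefficient of $S^{k}$, namely $\mathbf{x}_k(t) = P_{kk}(t)$, to vanish; a transpose pair $\{(i,j),(j,i)\}\subset\mathcal{Z}$ with $i<j$ and $(i,j) = \boldsymbol{\tau}(m)$ forces $P_{ij}(t) = 0$, hence both the symmetric-real coefficient $\mathbf{x}_{N_C+m}(t)$ and the antisymmetric-imaginary coefficient $\mathbf{x}_{N_C(N_C+1)/2+m}(t)$ to vanish (these being, up to sign, the real and imaginary parts of $P_{ij}(t)$). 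In each case the number of zero entries matches the number of zero coefficients, so exactly $D$ of the components of $\mathbf{x}_{-N_C}(t)$ are identically zero.

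Finally I would delete those $D$ vanishing coordinates. Writing $M'(t)\mathbf{x}_{-N_C}(t) = \sum_j [M'(t)]_{:,j}\,[\mathbf{x}_{-N_C}(t)]_j$, the $D$ summands whose coefficient is identically zero contribute nothing; removing the corresponding $D$ columns of $M'(t)$ to form $M''(t)$ and the corresponding $D$ entries of $\mathbf{x}_{-N_C}(t)$ to form $\mathbf{x}_-(t)$ leaves the left-hand side unchanged, while the right-hand side $\mathbf{q}_{\ell}(t) - [M(t)\widetilde{S}]_{:,N_C} =: \mathbf{b}_{\ell}(t)$ is untouched; this is exactly (\ref{eqn:bigsys4}), with $M''(t)$ of shape $(\ell+1)K^2\times(N_C^2-1-D)$ and $\mathbf{x}_-(t)\in\mathbb{R}^{N_C^2-1-D}$. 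I expect the main obstacle to be not analytic but bookkeeping: keeping the correspondence between zero entries of $P(t)$ and zero coefficients in the basis $S$ exactly consistent so that the column count drops by precisely $D$, and verifying that the coordinate already spent on the trace constraint is not one of the symmetry-forced zeros — both of which are handled by the transpose-closure of $\mathcal{Z}$ and the relabeling in the first step.
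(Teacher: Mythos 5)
Your proposal is correct and follows essentially the same route as the paper's (very terse) proof: identify the basis coefficients in $\mathbf{x}_{-N_C}(t)$ that the known zero entries of $P(t)$ force to vanish, and delete the corresponding columns of $M'(t)$. Your additional bookkeeping---checking that $D$ zero entries of $P(t)$ yield exactly $D$ zero coefficients via transpose-closure, and ruling out a collision with the trace-eliminated diagonal coordinate---is care the paper omits but does not constitute a different argument.
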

We can now use Propositions \ref{Hermitian}, \ref{traceconstr}, and \ref{identzero} to give an improved version of (\ref{eqn:Qprop}), the delay equation to propagate the 1RDMs $Q(t)$.
\begin{prop}[Constraint-Preserving Propagation of 1RDMs]
\label{Qpropsymm}
Suppose the hypotheses of Propositions \ref{Qprop}, \ref{traceconstr}, and \ref{identzero} are satisfied.  Suppose that $M''(t)$ as constructed in Proposition \ref{identzero} has full column rank.  Recall $\mathbf{b}_{\ell}(t)$ was defined by the right-hand side of (\ref{eqn:bigsys4}).
Then there exists an affine map $\mathcal{A} : \mathbb{R}^{N_C^2 - 1 - D} \to \mathbb{R}^{N_C^2}$ such that the 1RDM $Q(t)$ satisfies the delay equation
\begin{equation}
\label{eqn:Qpropsymm}
\vvec(Q(t+1)) =  \widetilde{B} \left( \exp(i H(t)^T \Delta t) \otimes \exp(-i H(t) \Delta t) \right) \underbrace{ \widetilde{S} \mathcal{A} M''(t)^{+} \mathbf{b}_{\ell}(t) }_{\vvec(P(t))}.
\end{equation}
\end{prop}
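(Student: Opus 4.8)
The plan is to recognize (\ref{eqn:Qpropsymm}) as a reparametrized form of the raw scheme (\ref{eqn:Qprop}): replace the pseudoinverse reconstruction $M(t)^{+}\mathbf{q}_{\ell}(t)$ of $\vvec(P(t))$ by the constraint-respecting reconstruction assembled from Propositions \ref{Hermitian}, \ref{traceconstr}, and \ref{identzero}, and then argue that the full-column-rank hypothesis on $M''(t)$ makes that reconstruction \emph{exact}. The forward-propagation half of the formula is then inherited verbatim from Proposition \ref{Qprop}.

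First I would record the exact identity behind the whole construction: by the way $M(t)$ is built in Proposition \ref{Qprop}, each of its block rows maps $\vvec(P(t))$ to a past, back-propagated 1RDM, so the true full density matrix satisfies $M(t)\vvec(P(t)) = \mathbf{q}_{\ell}(t)$, i.e.\ the system (\ref{eqn:bigsys}). Next I would chain the three equivalences. Expanding $P(t)=\sum_{j=1}^{N_C^2} x_j(t)S^j$ in the Hermitian basis of Lemma \ref{HermLem} gives $\vvec(P(t))=\widetilde{S}\mathbf{x}(t)$ and, by Proposition \ref{Hermitian}, turns (\ref{eqn:bigsys}) into (\ref{eqn:bigsys2}) with solution sets in linear bijection; eliminating $x_{N_C}(t)=1-\sum_{j<N_C}x_j(t)$ as in Proposition \ref{traceconstr} yields (\ref{eqn:bigsys3}) in the variable $\mathbf{x}_{-N_C}(t)$; deleting the $D$ coordinates forced to vanish by the known zero entries of $P(t)$ as in Proposition \ref{identzero} yields (\ref{eqn:bigsys4}), $M''(t)\mathbf{x}_{-}(t)=\mathbf{b}_{\ell}(t)$, again with solution sets in bijection. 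In particular the true $\mathbf{x}_{-}(t)$ solves (\ref{eqn:bigsys4}), so that system is consistent.

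Then I would invoke the full-column-rank hypothesis: in that case $M''(t)^{+}M''(t)$ is the identity (and the SVD-truncated pseudoinverse of Section \ref{sect:qcderiv} equals the exact one, since all retained singular values are nonzero), so the consistent system (\ref{eqn:bigsys4}) has the \emph{unique} solution $\mathbf{x}_{-}(t)=M''(t)^{+}\mathbf{b}_{\ell}(t)$. I would then undo the two structural reductions as a single affine map: let $\iota$ reinsert zeros at the $D$ deleted positions (linear), let $\kappa$ reinsert $x_{N_C}=1-\sum_{j<N_C}x_j$ at position $N_C$ (affine), and set $\mathcal{A}=\kappa\circ\iota:\mathbb{R}^{N_C^2-1-D}\to\mathbb{R}^{N_C^2}$, which is affine. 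By construction $\mathbf{x}(t)=\mathcal{A}(\mathbf{x}_{-}(t))$, hence $\vvec(P(t))=\widetilde{S}\mathbf{x}(t)=\widetilde{S}\,\mathcal{A}\,M''(t)^{+}\mathbf{b}_{\ell}(t)$, which is precisely the quantity identified with $\vvec(P(t))$ in (\ref{eqn:Qpropsymm}). Finally, propagating forward by (\ref{eqn:discsuper})--(\ref{eqn:kronsum}) and reducing by $\widetilde{B}$ (Definition \ref{redquan}), i.e.\ $\vvec(Q(t+1))=\widetilde{B}\,\bigl(\exp(iH(t)^{T}\Delta t)\otimes\exp(-iH(t)\Delta t)\bigr)\vvec(P(t))$, and substituting the reconstructed $\vvec(P(t))$, gives (\ref{eqn:Qpropsymm}).

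The main obstacle — indeed essentially the only substantive point — is the exactness claim in the third paragraph: that $M''(t)^{+}\mathbf{b}_{\ell}(t)$ returns the \emph{true} $\mathbf{x}_{-}(t)$ rather than merely a least-squares surrogate. This rests on (i) consistency of (\ref{eqn:bigsys4}), which comes from the exact identity $M(t)\vvec(P(t))=\mathbf{q}_{\ell}(t)$ together with the chain of equivalences in Propositions \ref{Hermitian}--\ref{identzero}, and (ii) uniqueness, which is exactly what the full-column-rank assumption supplies (and which also forces the \emph{a priori} complex vector $M''(t)^{+}\mathbf{b}_{\ell}(t)$ to coincide with the real $\mathbf{x}_{-}(t)$). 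The remaining work is bookkeeping: checking that the three reductions are mutually compatible — in particular that the trace-eliminated index $N_C$ can be chosen outside the set of $D$ a-priori-zero coordinates, so that $\iota$ and $\kappa$ compose into a well-defined affine map — and then assembling the pieces. Everything else is routine substitution.
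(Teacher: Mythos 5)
Your proposal is correct and follows essentially the same route as the paper's proof: compose the zero-reinsertion and trace-reinsertion maps into the affine map $\mathcal{A}$, use the full-column-rank hypothesis to solve the consistent system (\ref{eqn:bigsys4}) exactly via $M''(t)^{+}\mathbf{b}_{\ell}(t)$, reconstruct $\vvec(P(t))=\widetilde{S}\mathcal{A}M''(t)^{+}\mathbf{b}_{\ell}(t)$, and then inherit the forward-propagation step from Proposition \ref{Qprop}. Your treatment is somewhat more explicit than the paper's (in particular on consistency plus uniqueness implying exactness of the pseudoinverse solution, and on the compatibility of the $N_C$-th index with the $D$ eliminated coordinates), but the underlying argument is the same.
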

At time $t$, the basic idea of (\ref{eqn:Qpropsymm}) is to solve the linear system (\ref{eqn:bigsys4}) for $\mathbf{x}_{-}(t) = M''(t)^{+} \mathbf{b}_{\ell}(t)$.  Given $\mathbf{x}_{-}(t)$, we use our knowledge of \emph{a priori} zero entries (if such entries exist) and constant trace to compute the full vector $\mathbf{x}(t)$---we formalize this operation as $\mathbf{x}(t) = \mathcal{A} \mathbf{x}_{-}(t)$ where $\mathcal{A}$ is affine.  Equipped with the real $N_C^2$-dimensional vector $\mathbf{x}(t)$, we compute $\widetilde{S} \mathbf{x}(t)$, our approximation/reconstruction of the vectorized complex density $\vvec(P(t))$.  We propagate this forward by one time step using the Kronecker product expression, and then compute the 1RDM using $\widetilde{B}$.

In practice, we compute an approximate pseudoinverse $M''(t)^{+}$ using the procedure described after (\ref{eqn:Qprop}); we threshold singular values via (\ref{eqn:sigreltol}) with the relative tolerance $r_{\text{tol}} = 10^{-12}$.  While this enables us to use (\ref{eqn:Qpropsymm}) even when $M''(t)$ is rank-deficient and/or ill-conditioned, the method is no longer exact, motivating the numerical experiments in Section \ref{sec:results}.

By building the trace constraint into the solution process, we guarantee that each $P(t)$ on the right-hand side of (\ref{eqn:Qpropsymm}) has trace equal to one.  By properties established in Section \ref{sect:Btendef}, this implies that (\ref{eqn:Qpropsymm}) preserves $\trace(Q(t)) = N$ for all $t$, as in (\ref{eqn:traceQ}).

Since $M''(t)$ has $D+1$ fewer columns than $M(t)$, we expect $M''(t)$ to have a better condition number than $M(t)$, improving our ability to propagate $Q(t)$ accurately.

For the above reasons, (\ref{eqn:Qpropsymm}) should be seen as a replacement of (\ref{eqn:Qprop}); it is the method actually used in the numerical results described below.  



\subsection{Characterizing the Rank of $M(t)$}
\label{sect:rankM}
Let us reexamine $M(t)$ from (\ref{eqn:bigsys}). The purpose of including additional blocks $(\mathcal{D}_1, \ldots, \mathcal{D}_{\ell})$ in $M(t)$ is to increase the rank of the resulting matrix.  Having found numerically that  for sufficiently large $\ell$,  $M(t)$ achieves rank equal to its number of columns, we briefly examine the question of whether (and under what hypotheses) we can prove this.

As $H(t)$ is always Hermitian, $C_m(t)^T$ and $A_m(t)$ defined by (\ref{eqn:CjAj}) will always be unitary.   By (\ref{eqn:Bsubj}),  $\mathcal{D}_j$ consists of $\widetilde{B}$ multiplied by a unitary matrix.  Recall that $\widetilde{B}$ is of size $K^2 \times N_C^2$ with $K < N_C$.  
If $\rank(\widetilde{B}) = K^2$, then $\rank(\mathcal{D}_j) = K^2$ as well.

A simple observation is that if $(C_m(t)^T \otimes A_m(t))$ happens to be the identity matrix, then $M(t)$ will consist of $\widetilde{B}$ repeated $(\ell+1)$ times; in this scenario, $\rank(M(t)) = K^2$ for any $\ell$.  Thus the assumption that adding blocks \emph{increases} the rank of $M(t)$ stems from our intuition that as $t$ increases, the system's dynamics will move  $C_m(t)^T \otimes A_m(t)$ \emph{away} from the identity.


A quantum system's dynamics are determined by its Hamiltonian $H(t)$.  If we examine (\ref{eqn:matrixham}), we see three objects about which we have made no assumptions thus far: the static Hamiltonian $H_0$, the external field strength $f(t)$, and the dipole moment matrix $M_{\text{dip}}$.  We have yet to find hypotheses on $H_0$, $f(t)$ and $M_{\text{dip}}$ that guarantee that, for sufficiently large $\ell$, the matrix $M(t)$ has rank equal to $N_C^2$ for all $t \geq 1$.  

 To motivate future work, we examine the $\ell=1$ case in more detail.  For a matrix $Z$ and a set of positive integers $G$, let $Z^G$ (respectively, $Z^{-G}$) denote the matrix formed by selecting only those columns of $Z$ with indices in (respectively, not in) $G$.  Then we can give a condition under which $M(t)$ has full rank.
\begin{prop}
\label{Schur}
Let $\ell = 1$ and consider $M(t)$ from (\ref{eqn:bigsys}).  Suppose $2K^2 \leq N_C^2$ and $\rank(\widetilde{B}) = K^2$. Equipped with column indices $G \subset \{1, 2, \ldots, N_C^2\}$, let us form the block matrix $\displaystyle \widehat{M} = \begin{bmatrix}
\widetilde{B}^G & \widetilde{B}^{-G} \\
\mathcal{D}_1^G & \mathcal{D}_1^{-G}
\end{bmatrix}$. If there exists $G$ of size $|G| = K^2$ such that (i) $\widetilde{B}^G$ has rank $K^2$ and (ii) the Schur complement of $\widetilde{B}^G$ of $\widehat{M}$ has rank $K^2$, then $M(t)$ has rank $2 K^2$.
\end{prop}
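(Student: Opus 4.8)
The plan is to reduce the claim to the classical rank-additivity identity for a $2\times 2$ block matrix with an invertible pivot block. First I would observe that $\widehat{M}$ is obtained from $M(t)=\bigl[\begin{smallmatrix}\widetilde B\\ \mathcal D_1\end{smallmatrix}\bigr]$ purely by reordering columns (collecting the $G$-columns of each block on the left), so $\widehat M$ and $M(t)$ have the same column space and hence the same rank; it therefore suffices to prove $\rank\widehat M = 2K^2$. Before manipulating blocks, I would record their shapes: since $\widetilde B$ is $K^2\times N_C^2$ and $\mathcal D_1 = \widetilde B\,(C_k(t)^T\otimes A_k(t))$ with $C_k(t)^T\otimes A_k(t)$ of size $N_C^2\times N_C^2$, the block $\mathcal D_1$ is also $K^2\times N_C^2$; with $|G|=K^2$ this makes $\widetilde B^G,\mathcal D_1^G$ square $K^2\times K^2$ and $\widetilde B^{-G},\mathcal D_1^{-G}$ of size $K^2\times(N_C^2-K^2)$. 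The hypothesis $2K^2\le N_C^2$ guarantees $N_C^2-K^2\ge K^2$, so the Schur complement, which is $K^2\times(N_C^2-K^2)$, can in principle attain rank $K^2$.

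The core step uses hypothesis (i): $\widetilde B^G$ is an invertible $K^2\times K^2$ matrix, so I may left-multiply $\widehat M$ by the invertible $2K^2\times 2K^2$ matrix $\bigl[\begin{smallmatrix} I & 0\\ -\mathcal D_1^G(\widetilde B^G)^{-1} & I\end{smallmatrix}\bigr]$ to annihilate the $(2,1)$ block, then right-multiply by the invertible $N_C^2\times N_C^2$ matrix $\bigl[\begin{smallmatrix} I & -(\widetilde B^G)^{-1}\widetilde B^{-G}\\ 0 & I\end{smallmatrix}\bigr]$ to annihilate the $(1,2)$ block. Multiplying on either side by invertible matrices leaves the rank unchanged, and the result is the block-diagonal matrix $\diag\!\bigl(\widetilde B^G,\ \mathcal D_1^{-G}-\mathcal D_1^G(\widetilde B^G)^{-1}\widetilde B^{-G}\bigr)$, whose second block is exactly the Schur complement $\widehat M/\widetilde B^G$ of $\widetilde B^G$ in $\widehat M$. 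A block-diagonal matrix has rank equal to the sum of the ranks of its blocks, so $\rank\widehat M = \rank\widetilde B^G + \rank(\widehat M/\widetilde B^G)$.

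To finish, I would invoke the hypotheses: $\rank\widetilde B^G = K^2$ by (i) and $\rank(\widehat M/\widetilde B^G) = K^2$ by (ii), giving $\rank\widehat M = 2K^2$, hence $\rank M(t) = 2K^2$ (which, since $M(t)$ has $2K^2\le N_C^2$ rows, is full row rank). I do not anticipate a genuine obstacle in this argument, as it is elementary linear algebra; the only things requiring care are the block-dimension bookkeeping above and checking that the elementary elimination matrices have the claimed sizes (note that the unitarity of $C_m(t)^T$ and $A_m(t)$ from Section \ref{sect:rankM} is not actually used here). The genuinely hard problem, which the paper explicitly leaves open, is the one hidden inside hypothesis (ii): producing checkable conditions on $H_0$, $f(t)$, and $M_{\text{dip}}$ under which a suitable index set $G$ exists so that the Schur complement attains full rank.
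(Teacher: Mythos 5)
Your proof is correct. It differs from the paper's own argument in packaging rather than in substance, but the difference is worth noting. The paper proves the result by directly testing linear independence of the rows of $M(t)$: it posits $x \widetilde{B} + y \mathcal{D}_1 = 0$ for row vectors $x, y \in \mathbb{C}^{K^2}$, restricts to the columns in $G$ to solve $x = -y\, \mathcal{D}_1^G (\widetilde{B}^G)^{-1}$ using hypothesis (i), substitutes back and restricts to the columns not in $G$ to get $y \bigl( \mathcal{D}_1^{-G} - \mathcal{D}_1^G (\widetilde{B}^G)^{-1} \widetilde{B}^{-G} \bigr) = 0$, and then invokes hypothesis (ii) to force $y = 0$ and hence $x = 0$. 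You instead perform block Gaussian elimination on $\widehat{M}$ with elementary block matrices and appeal to rank additivity for the resulting block-diagonal matrix --- i.e., the Guttman rank additivity formula. The two arguments hinge on the identical Schur complement $\mathcal{D}_1^{-G} - \mathcal{D}_1^G (\widetilde{B}^G)^{-1} \widetilde{B}^{-G}$, and the paper's row-independence computation is essentially the dual of your factorization. What your route buys is cleaner scalability: the paper itself remarks, immediately after the proposition, that the Guttman formula is the natural tool for extending the rank count to $\ell > 1$ by iterating the block reduction, whereas the explicit row-independence bookkeeping becomes awkward with more blocks. Your observation that $\widehat{M}$ is a column permutation of $M(t)$ (hence equal in rank) is implicit but unstated in the paper, and your dimension check that $2K^2 \leq N_C^2$ is what allows the $K^2 \times (N_C^2 - K^2)$ Schur complement to attain rank $K^2$ matches the role that hypothesis plays in the paper. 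Your closing remark correctly identifies that the real open difficulty lives in verifying hypothesis (ii), not in the linear algebra.
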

For the proof, see Appendix \ref{sect:Schurproof}.  More generally, all we need is for $\rank(M(t))$ to increase each time we increase $\ell$ by $1$; an increase of precisely $K^2$ is unnecessary.  If we can reorder the columns of $M(t)$ to form the block matrix $\widehat{M}$ with the desired properties, then we can apply the Guttman rank additivity formula\cite{Zhang2005} to compute the rank of $M(t)$.  Both the above lemma and the Guttman formula use the Schur complement.

\begin{figure}[t]
    \centering
     \includegraphics[width=0.75\linewidth,clip,trim=0 25 0 10]{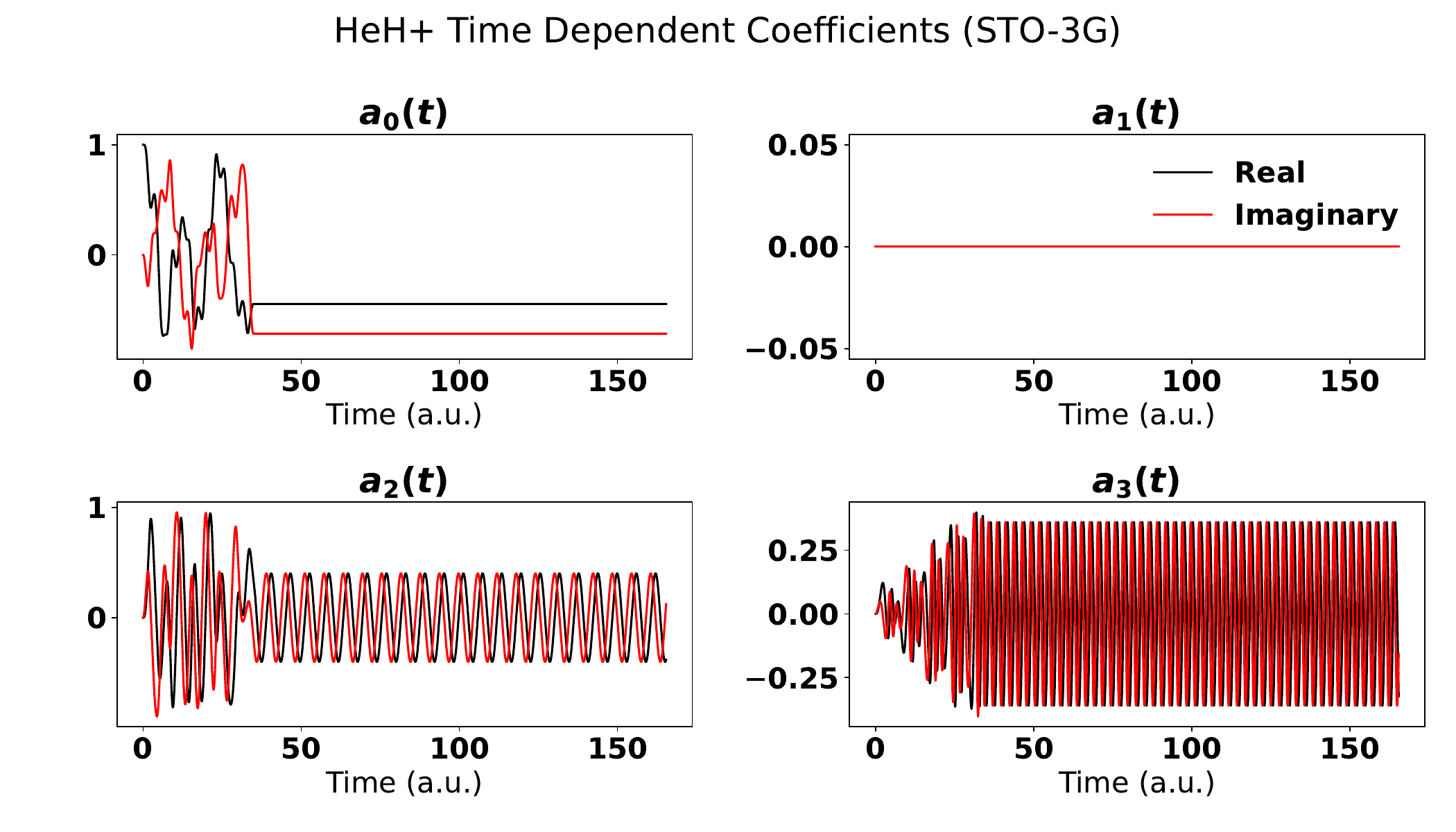}
    \caption{For the molecule \heh in the  STO-3G basis with $\Delta t = 0.008268$ a.u., we apply the TDCI procedure from Section \ref{sect:procedure} to compute the time-dependent coefficients $\mathbf{a}(t)$ for 20,000 time steps. Because $a_1(t) \equiv 0$, in the associated full TDCI density matrix $P(t) = \mathbf{a}(t) \mathbf{a}(t)^\dagger$, the second column and second row vanish identically.  The consequences of this are explained in Section \ref{sect:symm}.}
    \label{fig:heh+_sto-3g_td_coeffs}
\end{figure}


\section{Numerical Results}
\label{sec:results}
\emph{How much memory $k \ell \Delta t$ do we need in order to compute accurate 1RDMs for various molecular systems?} To answer this question, we propagate 1RDMs using (\ref{eqn:Qpropsymm}) and compare the results against ground truth 1RDMs.  We now detail how we conduct these tests numerically.   Here $\ell$ is the raw number of memory steps (and $\ell+1$ is the number of blocks) that we find in the vector $\mathbf{q}_{\ell}(t)$ in (\ref{eqn:Mtql}), $k$ is the stride, and $\Delta t$ is the time step.  We see that $k \ell \Delta t$, which we refer to below as \emph{memory used} or \emph{total memory}, appears in (\ref{eqn:Mtql}) and represents the maximum time delay used by our propagation scheme, measured in atomic units (a.u.) rather than by number of time steps.

\subsection{Procedure}
\label{sect:procedure}
Because computing full CI solutions is expensive, and as this is the first time we are studying (\ref{eqn:Qpropsymm}), we focus attention on $\heh$ and $\htwo$, both with $N = 2$ electrons, in each of two basis sets: STO-3G and 6-31G.   Recall that full density matrices $P(t)$ are $N_C \times N_C$ while 1RDMs $Q(t)$ are $K \times K$. For STO-3G, $N_C = 4$ and $ K = 2$; for 6-31G, $N_C = 16$ and $K = 4$.  

Computing ground truth 1RDMs $Q_{\text{true}}(t)$ requires some work.  For a given molecular system (choice of molecule plus basis set), we apply the CASSCF method \cite{Roos1980} with two electrons and all orbitals included in the active space to compute what is effectively a full CI solution.  That is, the solution we obtain is equivalent to using all possible Slater determinants in (\ref{eqn:CIbasis}).  We obtain from this procedure the data needed to perform a TDCI calculation, including sets of molecular orbitals and CI basis functions, the diagonal Hamiltonian matrix $H_0$ and the dipole moment matrix $M_{\text{dip}}$---see Section \ref{sect:TDCI} for definitions of these objects.


For each molecular system, we apply a sinusoidal external electric field.  Namely, the Hamiltonian takes the form $H(t) = H_0 + f(t) M_{\text{dip}}$, for $f(t) = A\sin \omega t$. For the results reported here, for both molecules, we fix $A = 0.5$.
For $\heh$, we set $\omega = 0.9 \text{(a.u.)}^{-1}$ and for $\htwo$, we set $\omega = 1.5 \text{(a.u.)}^{-1}$.  We apply the electric field for five cycles (approximately 35 a.u. and 21 a.u. of total time for $\heh$ and $\htwo$ respectively), beginning at time $t = 0$.  We chose these field parameters because, in our initial explorations with the STO-3G basis set, they produced electron dynamics that were the most difficult to fit.

Let $\mathbf{a}(0)$ be the unit vector such that $a_1(0) = 1$ and $a_j = 0$ for all $j \geq 2$.  Then for a fixed time step $\Delta t > 0$ and a prescribed external field strength $f(t)$, we solve (\ref{eqn:matrixschro}) via the scheme $\mathbf{a}(t + \Delta t) = \exp(-i H(t) \Delta t) \mathbf{a}(t)$.  From this, we compute full density matrices $P_{\text{true}}(t)$ via (\ref{eqn:fulltdciden}).  We use the methods from Sections \ref{sect:rdmdef} and \ref{sect:Btendef} to compute the tensor $B$.  With $P_{\text{true}}(t)$ and $B$, we compute $Q_{\text{true}}(t)$ via (\ref{eqn:rdmdef}).  The only parameters controlling the accuracy of $Q_{\text{true}}(t)$ are the choice of basis set and the time step $\Delta t$.


Computing 1RDMs with the methods developed in this paper requires setting two additional parameters: the delay length $\ell$ and the stride $k$.  Recall from (\ref{eqn:Bsubj}), (\ref{eqn:backwardQt}), and (\ref{eqn:bigsys}) that with delay length $\ell$ and stride $k$, $Q(t+\Delta t)$ depends on $Q(t)$ and a total of $k\ell$ previous 1RDMs: $Q(t - j k \Delta t)$ for $j = 1, 2, \ldots, \ell$.  Unless stated otherwise, we set $k=1$.  Once we have set $\ell$ and $k$, we use (\ref{eqn:Qpropsymm}) to propagate 1RDMs forward in time.  As with any delay evolution equation, we must prescribe initial values on a segment.  For $t < k \ell \Delta t$, we set $Q(t) = Q_{\text{true}}(t)$.  We then use (\ref{eqn:Qpropsymm}) to propagate forward in time for $t \geq k \ell \Delta t$.  In what follows, we use $Q_{\text{model}}(t)$ to denote the resulting solution.

We compute $Q_{\text{true}}(t)$ and $Q_{\text{model}}(t)$ at times $t=j \Delta t$ up to $j = n_{\text{steps}}$.  The final time $T = n_{\text{steps}} \Delta t$ is $165.36$ a.u.  This corresponds to either $n_{\text{steps}} = 2000$ with $\Delta t = 0.082680$ a.u., or $n_{\text{steps}} = 20000$ with $\Delta t = 0.008268$ a.u.

To measure the mismatch between ground truth and model 1RDMs, we use the root mean-squared error (RMSE) over time steps $\ell+1, \ldots, n_{\text{steps}}$.  With the Frobenius norm $\| \cdot \|_F$,
\begin{equation}
\label{eqn:RMSE}
\text{RMSE} = \sqrt{\frac{1}{K^2} \frac{1}{n_{\text{steps}-\ell}} \sum_{j=\ell+1}^{n_{\text{steps}}} \left\| Q_{\text{model}}(j \Delta t) - Q_{\text{true}}(j \Delta t) \right\|_F^2}.
\end{equation}
We also use the mean absolute error (MAE) at each time $t$:
\begin{equation}
\label{eqn:MAE}
\text{MAE}(t) = \frac{1}{K^2} \sum_{i,j=1}^{K} \left| Q^{\text{model}}_{ij}(t) - Q^{\text{true}}_{ij}(t) \right|.
\end{equation}
\subsection{Computational Resources}
All computations were carried out on login and GPU nodes of the NERSC Perlmutter cluster \footnote{\url{https://docs.nersc.gov/systems/perlmutter/architecture/}}.  All source code required to reproduce our results is available in an online repository \footnote{\url{https://github.com/hbassi/1rdm_memory_model}}. Propagation scheme (\ref{eqn:Qpropsymm}) was implemented in Python using JAX (for $k=1$) and CuPy (for $k \geq 2$) \cite{jax2018github,cupy_learningsys2017}. For calculations in the STO-3G basis set, the wall-clock run time of the propagation scheme ranged from $45$ seconds to $20$ minutes. In contrast, for calculations in the 6-31G basis set, the wall-clock run time of the propagation scheme took between $15$ and $45$ minutes. Run times depend primarily on the parameter $\ell$; as $\ell$ increases, we need more Kronecker products to propagate $Q(t)$ via (\ref{eqn:Qpropsymm}).

\subsection{Results and Discussion}
\label{sect:results}
Our main numerical results, summarized in Figures \ref{fig:MAE_total_results_with_striding}-\ref{fig:residual_error}, quantify how propagation of 1RDMs depends on total memory $k \ell \Delta t$, time step $\Delta t$, and stride $k$.  A key goal is to gain insight on how much total memory is required for each of our four molecular systems ($\heh$ and $\htwo$ in STO-3G and 6-31G) to achieve numerically acceptable levels of accuracy.


\begin{figure}
    \centering
    \includegraphics[width=\textwidth,clip,trim=0 25 0 10]{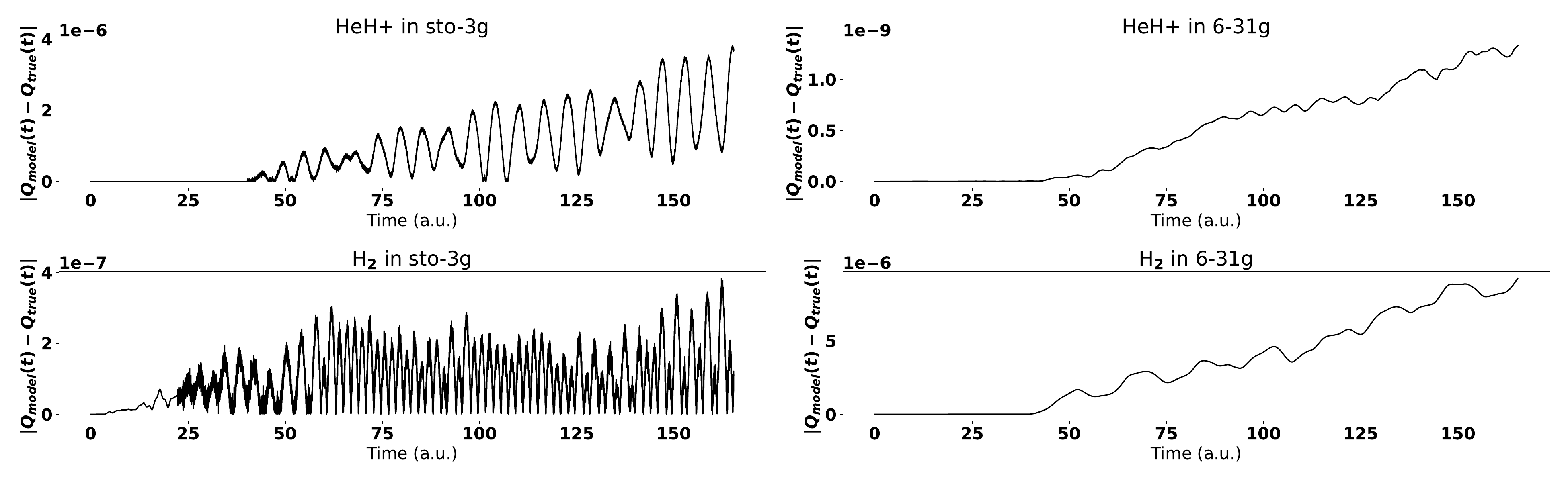}
    \caption{For each of four molecular systems we tested, with sufficiently large total memory ($k \ell \Delta t$), the 1RDMs produced by our propagation scheme (\ref{eqn:Qpropsymm}) agree closely with ground truth 1RDMs at all times $t$.  Each plot shows $\text{MAE}(t)$ defined by (\ref{eqn:MAE}) versus physical time in atomic units (a.u.)  From left to right, top to bottom, the maximum MAE values are bounded by $4 \times 10^{-6}$, $1.5 \times 10^{-9}$, $4 \times 10^{-7}$, and $10^{-5}$; the corresponding values for total memory are $5.3$, $6.6$, $0.6$, and $13$ a.u., respectively.  Note that strides of $k \geq 2$ were used for all systems except $\htwo$ in STO-3G.  For details of all other parameters, consult the text in Section \ref{sect:results}.}
    \label{fig:MAE_total_results_with_striding}
\end{figure}

\begin{figure}
    \centering
    \includegraphics[width=1.0\textwidth,clip,trim=0 25 0 10]{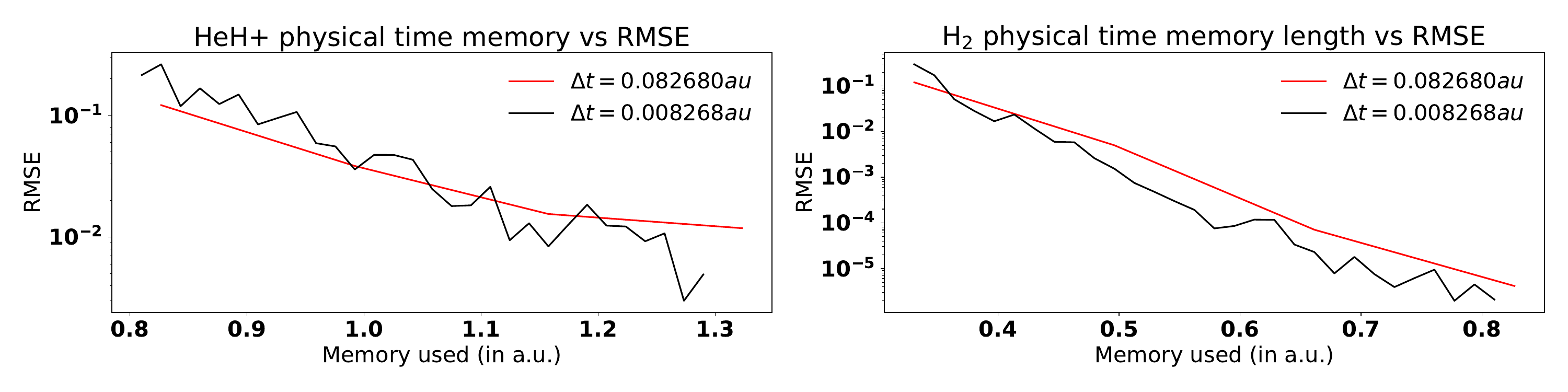}
    \caption{For each molecule in the STO-3G basis set, we fix the stride $k=1$.  For each of two choices of $\Delta t$, we plot the RMSE (\ref{eqn:RMSE}) as a function of total memory $k \ell \Delta t$.  Note that for both molecules, the RMSE approaches zero at similar rates, regardless of $\Delta t$.  This indicates that our scheme (\ref{eqn:Qpropsymm}) identifies a physical time scale of 1RDM memory-dependence.}
    \label{fig:timestep_effect}
\end{figure}

\subsubsection{With sufficient memory, our time-delay propagation scheme yields highly accurate 1RDMs} Figure \ref{fig:MAE_total_results_with_striding} shows that, with sufficiently large total memory ($k \ell \Delta t$), the 1RDMs produced by (\ref{eqn:Qpropsymm}) agree closely with ground truth 1RDMs at all times $t$.  For the results in Figure \ref{fig:MAE_total_results_with_striding}, $\Delta t = 0.008268$ a.u. and $n_{\text{steps}} = 20000$.  Here our goal was to choose parameters such that $\text{MAE}(t)$---as defined by (\ref{eqn:MAE})---drops below $O(10^{-6})$ for each molecular system.  This level of accuracy demonstrates that the scheme (\ref{eqn:Qpropsymm}) is indeed capable of computing 1RDMs that are in close quantitative agreement with ground truth 1RDMs.  For $\heh$ in STO-3G and 6-31G, we used parameters of $\ell=160$, $k=4$ and $\ell=160$, $k=5$, respectively. For $\htwo$ in STO-3G and 6-31G, we used parameters of  $\ell = 72$, $k = 1$ and $\ell = 220$, $k = 7$, respectively.  These values lead to total memory ($k \ell \Delta t$) of $5.3$, $6.6$, $0.6$, and $13$ a.u. for the four molecular systems considered.

\subsubsection{Results are approximately invariant under time step refinement}\label{sect:timesteprefinement} For each molecular system, we seek a physical time scale that governs memory-dependence of 1RDMs.  In order to rule out that the computed time scales are numerical artifacts, we check whether our results depend on $\Delta t$.  For the two molecules in STO-3G, we fix the stride to be $k=1$ and repeatedly run simulations---using (\ref{eqn:Qpropsymm})---at increasing values of $\ell$.  We do this for both values of $\Delta t$ and $n_{\text{steps}}$ mentioned above, such that $n_{\text{steps}} \Delta t = T = 165.36$ a.u.  For each choice of $\ell$ and $\Delta t$, we compute the RMSE (\ref{eqn:RMSE}).  We plot in Figure \ref{fig:timestep_effect} the RMSE versus total memory ($k \ell \Delta t$) for both choices of $\Delta t$.  For both molecules, and for both choices of $\Delta t$, the RMSE approaches zero as total memory increases.  Moreover, the rate at which RMSE approaches zero is approximately independent of $\Delta t$.  This indicates that our scheme  (\ref{eqn:Qpropsymm}) identifies a physical time scale of memory-dependence for each molecular system.


\begin{figure}[tp]
    \begin{subfigure}{\textwidth}
    \includegraphics[width=1.0\textwidth,clip,trim=0 25 0 10]{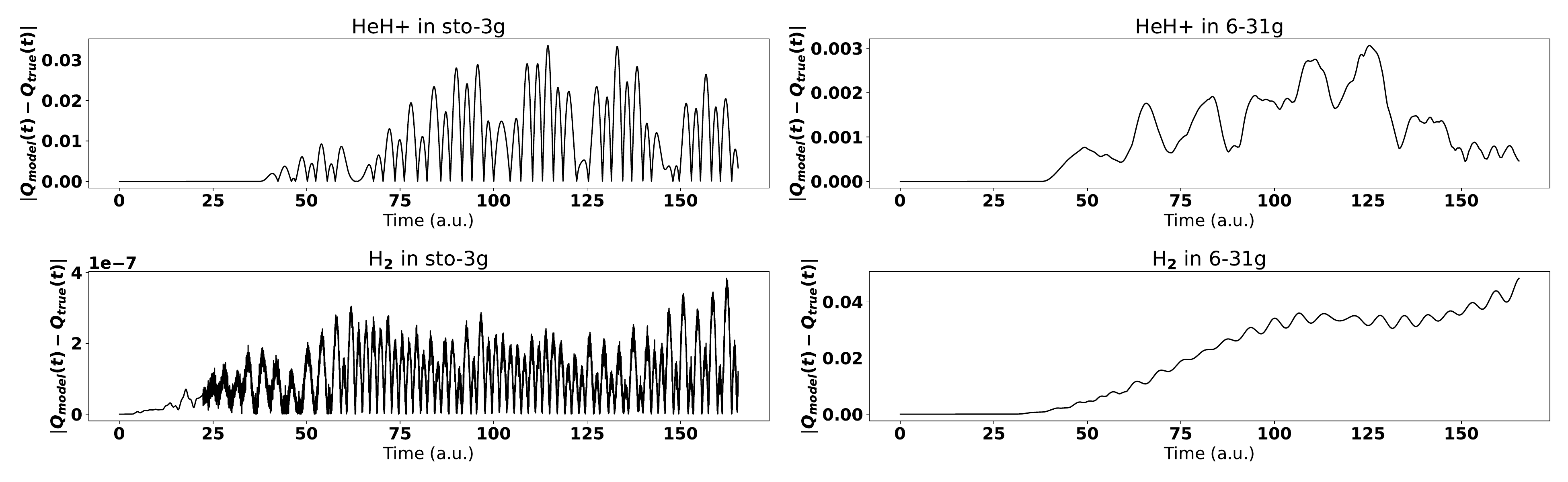}
    \caption{We plot $\text{MAE}(t)$ (black) defined by (\ref{eqn:MAE}) for each of four molecular systems.}
    \label{fig:5a}
    \end{subfigure}
    \begin{subfigure}{\textwidth}
    \includegraphics[width=1.0\textwidth,clip,trim=0 25 0 10]{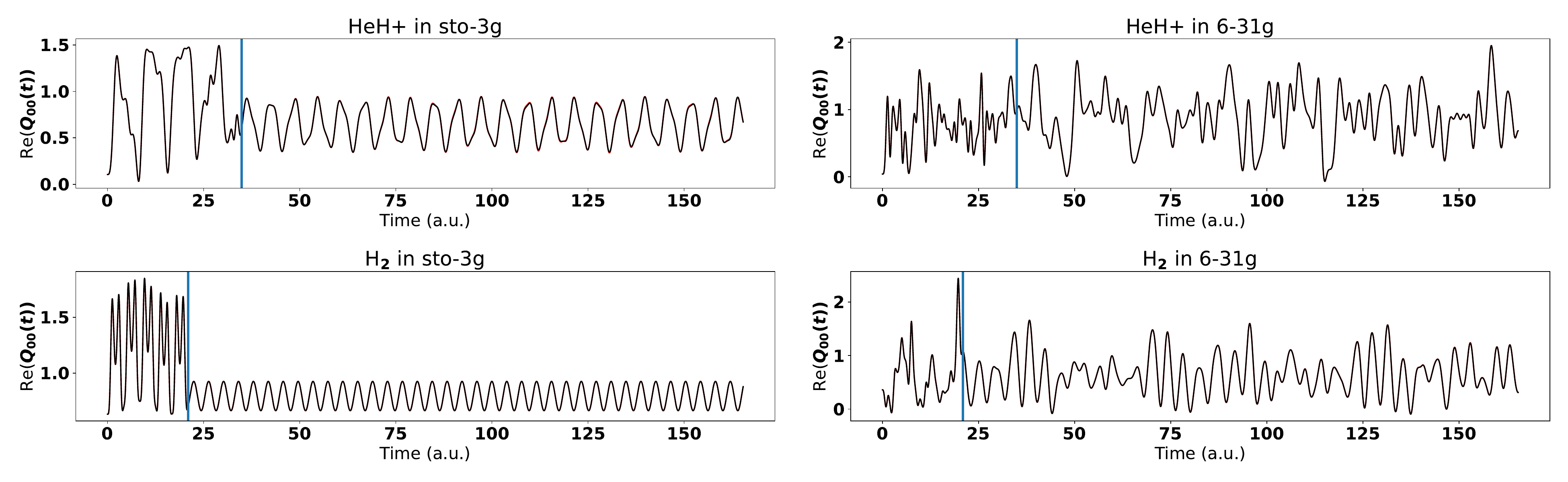}
    \caption{We plot the upper-left components of the matrices $Q_{\text{true}}(t)$ (black) and $Q_{\text{model}}(t)$ (red) for each of four molecular systems.}
    \label{fig:5b}
    \end{subfigure}
    \caption{For each of the four molecular systems we tested, we use less total memory than in Figure \ref{fig:MAE_total_results_with_striding}, and still find good agreement between modeled and true 1RDMs.  In particular, note that the red (model) and black (true) curves in Figure \ref{fig:5b} are nearly indistinguishable.  In Figure \ref{fig:5b}, the vertical blue line shows the time at which the electric field is turned off. Plots for the same molecular system were produced using the same parameter $\ell$; for these values and other details, see Section \ref{sect:results}.}
    \label{fig:MAE_total_results_no_striding}
\end{figure}

\subsubsection{With less total memory, 1RDMs retain qualitative accuracy} 
In Figure \ref{fig:MAE_total_results_no_striding}, we visualize the consequences of using less total memory for all molecular systems.  We set $k=1$, $\Delta t = 0.008268$ a.u., $n_{\text{steps}} = 20000$.  For $\heh$ in STO-3G and 6-31G, we set $\ell = 160$ (total memory of $1.3$ a.u.)  For $\htwo$ in STO-3G and 6-31G, we set $\ell = 72$ (total memory of $0.6$ a.u.) and $\ell=850$ (total memory of $7$ a.u.), respectively.    As compared with Figure \ref{fig:MAE_total_results_with_striding}, we use less total memory, incurring larger errors $\text{MAE}(t)$ at each time $t$, as shown in Figure \ref{fig:5a}.  Nonetheless, in Figure \ref{fig:5b}, we see close agreement between the upper-left components of the 1RDMs $Q_{\text{true}}(t)$ (black) and $Q_{\text{model}}(t)$ (red) at each $t$. Other components of the 1RDMs feature similar agreement.  We hypothesize that the qualitative match between $Q_{\text{true}}(t)$ and $Q_{\text{model}}(t)$ seen here is sufficient to calculate quantities of chemical/physical interest.

\begin{figure}
    \centering
    \includegraphics[width=1.0\textwidth,clip,trim=0 25 0 10]{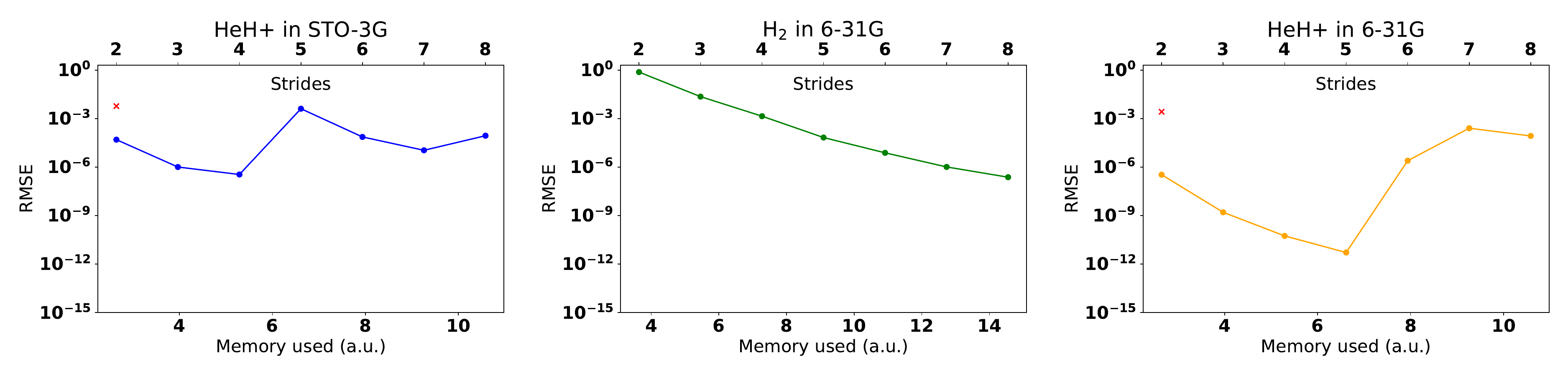}
    \caption{For the three molecular systems with higher error in Figure \ref{fig:MAE_total_results_no_striding}, we keep $\ell$ fixed and consider $k \in \{2,3,4,5,6,7,8\}$.  For each value of $k$, we run our scheme (\ref{eqn:Qpropsymm}) and compute the RMSE (\ref{eqn:RMSE}).  We plot the RMSE both against total memory $k \ell \Delta t$ (lower horizontal axis) and stride $k$ (upper horizontal axis).  In the left and right panels, we plot the baseline $k=1$ RMSE using a single red dot.  In the center panel, the baseline RMSE with $k=1$ is well above the vertical scale of the plot and hence not shown.  We see that increasing stride can reduce the RMSE by multiple orders of magnitude for each molecular system. }

    \label{fig:striding_results}
\end{figure}



\subsubsection{Using stride $k \geq 2$ leads to reduced propagation error} 
The dimensions of $M(t)$ and $\mathbf{q}_{\ell}(t)$ in (\ref{eqn:bigsys}) depend on $\ell$ but not $k$.  Thus for fixed $\ell$, increasing $k$ enables one to increase total memory \emph{without} a proportional increase in computational cost.

We theorized above that, for each molecular system, there exists a physical time scale of memory $k \ell \Delta t$.  One consequence of this theory is that if we fix $\ell$, fix $\Delta t$ and increase $k$, the RMSE (\ref{eqn:RMSE}) should decrease.  Here we test this idea.  We fix $\Delta t = 0.008268$ a.u.  For $\heh$, we set $\ell = 160$; for $\htwo$, we set $\ell = 220$.  For each $k \in \{2, 3, 4, 5, 6, 7, 8\}$, we repeatedly run propagation scheme (\ref{eqn:Qpropsymm}) and compute the RMSE (\ref{eqn:RMSE}).  Note that we omit $\htwo$ in STO-3G as the RMSE was already $O(10^{-4})$ with $k=1$ as shown in Figure \ref{fig:timestep_effect}.

We plot the resulting RMSE values against both total memory $k \ell \Delta t$ (lower horizontal axis) and stride $k$ (upper horizontal axis) in Figure \ref{fig:striding_results}.  We also plot, in the left and right panels, the RMSE associated with the baseline value $k=1$.  In the center panel, the corresponding RMSE for $k=1$ lies well above the vertical scale of the plot.  We see clear evidence that using strides $k \geq 2$ can reduce the RMSE by orders of magnitude compared with $k=1$.  Given the $\ell$ values quoted above, each corresponding unit increase in $k$ leads to a large increase in total memory $k \ell \Delta t$.  Thus increasing $k$ is an efficient way to boost the total memory used by the scheme (\ref{eqn:Qpropsymm}), without incurring increased computational cost.  Overall, these results support the view that scheme (\ref{eqn:Qpropsymm}) can be made arbitrarily accurate by tuning the total memory $k \ell \Delta t$.

\begin{figure}
    \centering
    \includegraphics[width=1.0\textwidth,clip,trim=0 15 0 10]{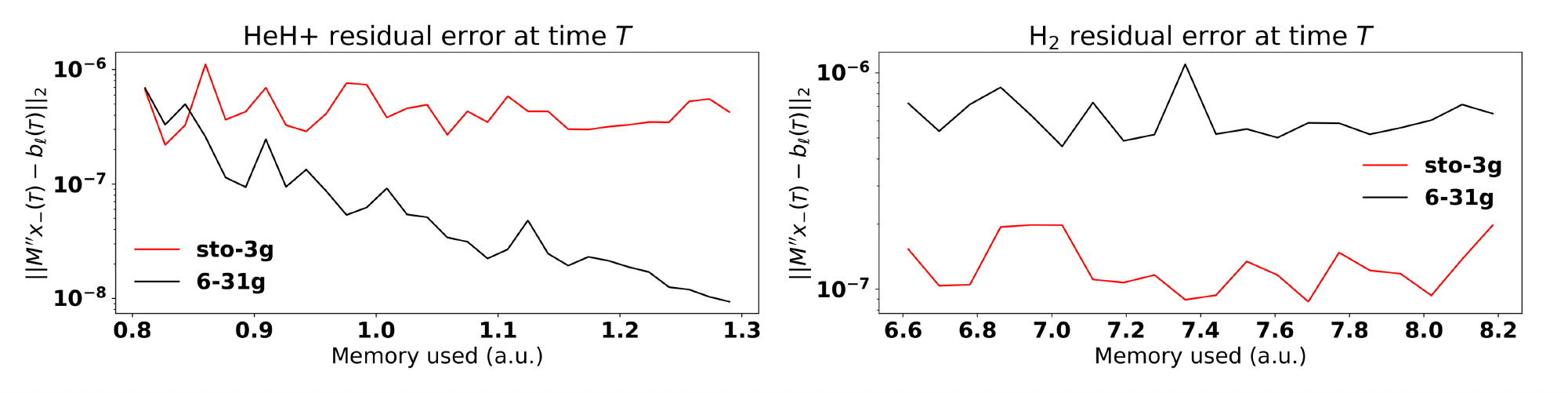}
    \caption{When we use (\ref{eqn:Qpropsymm}) to propagate 1RDMs $Q(t)$ forward in time, we use an approximate pseudoinverse $M''(t)^{+}$ to solve the linear system (\ref{eqn:bigsys4}) at each time step.  For each simulation run corresponding to a different value of $\ell$ and fixed values of $k=1$ and $\Delta t=0.008268$ a.u., we compute the residual error (\ref{eqn:residual}) at the final time $T$.  For each of our four molecular systems, we plot this error versus memory used ($k \ell \Delta t$).  The results show that the approximate pseudoinverse yields tolerable residual errors for all systems.}
    \label{fig:residual_error}
\end{figure}



\subsubsection{Residual errors for the linear system (\ref{eqn:bigsys4}) are reasonable}
Scheme (\ref{eqn:Qpropsymm}) requires solving 
(\ref{eqn:bigsys4}) for $\mathbf{x}_{-}(t) = M''(t)^+ \mathbf{b}_{\ell}(t)$ at each time step.  Recall from the discussions after (\ref{eqn:Qprop}) and (\ref{eqn:Qpropsymm}) that we use an approximate pseudoinverse $M''(t)^{+}$, resulting in an approximate solution $\mathbf{x}_{-}(t)$ at each $t$.  We compute and record the \emph{residual} at time $t$:
\begin{equation}
\label{eqn:residual}
\text{residual}(t) = \|M''(t) \mathbf{x}_{-}(t) - \mathbf{b}_{\ell}(t) \|_2.
\end{equation}
To measure how well our approximate pseudoinverse solves the linear system (\ref{eqn:bigsys4}), we ran a numerical experiment.  With fixed time step $\Delta t = 0.008268$ a.u. and stride $k=1$, and for increasing values of $\ell$, we ran repeated simulations of (\ref{eqn:Qpropsymm}) from $t=0$ to $t=T=165.36$ a.u. by taking $n_{\text{steps}} = 20000$.  

In Figure \ref{fig:residual_error}, for each of our four molecular systems, we plot $\text{residual}(T)$  as a function of total memory $k \ell \Delta t$.  For the values of total memory shown, the residual errors are in the range $O(10^{-6})$ to $O(10^{-8})$.  Based on all our other results , this is small enough for 1RDM propagation to succeed despite the use of an approximate pseudoinverse $M''(t)^{+}$.  However, this corresponds to condition numbers of $M''(t)$ in the range of $10^8$ to $10^{10}$; the plotted residual errors are neither unacceptably large nor desirably small (i.e., closer to machine precision).

Though $M''(t)$ may have full rank, for particular choices of system and simulation parameters, it is clearly ill-conditioned.  This is true despite the work we carried out in Section \ref{sect:methods} to incorporate Hermitian symmetry, constant trace, and identically zero elements into (\ref{eqn:bigsys4})---these steps reduced the number of columns of $M''(t)$.  This motivates future work to incorporate further properties of $P(t)$, such as its definition as a rank-$1$ matrix, into our propagation scheme for 1RDMs. If we can solve for still fewer elements of $P(t)$, then we can eliminate more columns of $M''(t)$, decreasing its condition number.  We describe one such path, leading to a nonlinear model, in Section \ref{sect:conclusion}.

\subsubsection{Results align with a prior study of memory in time-dependent current density functional theory} For the homogeneous electron gas perturbed by an electric field, analytic methods have been used\cite{kurzweil2004time} to derive exchange-correlation functionals with memory for time-dependent current density functional theory (TDCDFT), a theory that is distinct from TDDFT.  The memory kernels for the derived exchange-correlation functionals decay on a physical time scale of $10$-$20$ a.u.\cite{kurzweil2004time}  This is within the same order of magnitude as all results reported here, despite the differences in methods and systems.

\subsubsection{Further work is required to distill the relationship between memory-dependence and basis set}  For each molecular system, as we increase the size of the basis set, we expect to compute more accurate electron dynamics.  In the course of studying propagation method (\ref{eqn:Qpropsymm}), we found a complex interrelationship between basis set, total memory, field frequency, and error.  At some frequencies and choices of total memory, $Q_{\text{model}}(t)$ as computed by (\ref{eqn:Qpropsymm}) agrees more closely with $Q_{\text{true}}(t)$ as the basis set size increases.  For other choices of frequency and total memory, the opposite occurs.  The differences may partially be explained by symmetries of the molecules we have chosen, which affect the Hamiltonian $H(t)$ and by extension our propagation scheme.  For instance, due to the symmetry of the $\htwo$ molecule, there is no permanent dipole moment in either the ground or excited state, and we therefore find that its dipole moment matrix $M_{\text{dip}}$ has a zero diagonal. This has non-trivial implications for the numerics in (\ref{eqn:Qpropsymm}) that rely on the pseudoinverse.  All of this motivates future study, especially as we aim to quantify the memory-dependence of molecules on grids in real space (as in TDDFT), rather than in particular basis sets.





\section{Conclusion}
\label{sect:conclusion}
In this paper, we presented and derived a linear, closed time-delay system to propagate $1$-electron reduced density matrices (1RDMs).  To construct this system, we needed to understand the details of how to compute 1RDMs from full density matrices.  Building on concepts and methods from quantum chemistry, we developed a mathematical formalism that enables full evaluation of the $B$ tensor that relates full configuration interaction densities to 1RDMs---see Proposition \ref{Bten}.  
These results are general and do not depend on the basis set, number of electrons, or subset of Slater determinants that comprise the CI basis functions.

With $B$ in hand, we derived (\ref{eqn:Qprop}), our first propagation scheme for 1RDMs.  The main insight in this derivation was to add enough memory-based constraints---relating the full density at the present time to 1RDMs at past times---to change a linear system from underdetermined to overdetermined.  Next, we accounted for properties of full density matrices: their Hermitian symmetry, constant trace, and zero elements that can be eliminated \emph{a priori}.  This resulted in the improved propagation scheme (\ref{eqn:Qpropsymm}), which we implemented numerically and studied.

In our numerical tests, we focused on two $2$-electron molecules ($\htwo$ and $\heh$), each in two basis sets (STO-3G and 6-31G).  For the resulting four molecular systems, we showed that the propagation scheme (\ref{eqn:Qpropsymm}) produced highly accurate 1RDMs given a sufficient time delay.  For each molecular system, the accuracy of our propagation method improved as we increased total memory-dependence ($k \ell \Delta t$) measured not in time steps but in physical units of time.  For fixed $\Delta t$, when we held the number of delay steps $\ell$ fixed but increased the stride $k$, we saw the error approach machine precision.  We also saw that the error of our propagation scheme is approximately invariant under refinement of $\Delta t$.  This was particularly important given that the true dynamics take place in continuous time.

The present study motivates three areas for future work.  First, having developed mathematical methods to propagate 1RDMs that arise from CI wave functions, we are now well-positioned to address scientific questions.  We aim to establish basis-independent physical time scales of memory-dependence.  This will require disentangling the interdependence between memory-dependence, applied field parameters (including frequency, intensity, number of cycles), and the molecule as represented by its core Hamiltonian $H_0$ and dipole moment matrix $M_{\text{dip}}$.  We also hope to understand the memory-dependence of 1RDMs for larger systems.  As $N$ increases and the number of basis functions increases, we expect that the required memory will also increase due to the larger number of possible electronic states involved in the dynamics. For such systems, the method used here will not be feasible due to the need to construct the pseudoinverse.  Also, the TDCASSCF method with an all electron, full orbital active space will become intractable. In this case, other, more approximate electronic structure methods such as TDCIS (time-dependent configuration interaction singles) can be used to compute ground truth 1RDMs \cite{hirata1999configuration,foresman1992toward}.  Compared to TDCASSCF, TDCIS has fewer excited states for electrons to enter.  It will be of interest to study if and how approximations such as TDCIS impact the memory-dependence of 1RDMs.
 
Second, we can improve the propagation scheme (\ref{eqn:Qpropsymm}).  At present, the linear system (\ref{eqn:bigsys4}) involves roughly half of the entries of $P(t)$---the upper-triangular real and imaginary parts, including all but one element on the diagonal.  Suppose we eliminate more columns from $M''(t)$ and thereby learn fewer entries of the rank-$1$ matrix $P(t)= \mathbf{a}(t) \mathbf{a}(t)^\dagger$.  We can then formulate a quadratic programming problem to learn $\mathbf{a}(t)$ from our incompletely determined $P(t)$.  Solving this at each time step, we could propagate $\mathbf{a}(t)$ forward in time, giving another route to propagate 1RDMs.  The resulting method would be nonlinear, unlike the linear method we have derived in the present work.  If enough columns could be eliminated from $M''(t)$, condition numbers would decrease, yielding greater accuracy.

Third, our work proves that there exists a time-dependent affine map that propagates 1RDMs forward in time; concretely, this affine map is the right-hand side of (\ref{eqn:Qpropsymm}) not including $\mathbf{b}_{\ell}(t)$.  Imagine if we could learn this time-dependent map without having to compute the Hamiltonian $H(t)$.  Such a method would enable propagation of 1RDMs without the computational expense of running a static CI calculation to determine the core Hamiltonian $H_0$.  This would achieve large-scale computational cost savings.  It is an open question whether, for instance, machine learning methods could be used to learn the time-dependent affine map from data.  

In Section \ref{sec:intro}, we described recent efforts to machine learn exchange-correlation potentials $V_{\text{xc}}$ with memory-dependence on $1$-electron reduced densities.  In future work, we plan to use the methods developed in this paper (or improved versions thereof) to study memory-dependence of 1RDMs for a wide array of molecular systems in a broad set of field conditions, with the goal of achieving a comprehensive understanding of memory-dependence.   This will enable a more robust and chemically insightful search of neural network architectures to accurately model and eventually learn $V_{\text{xc}}$ in higher dimensions using, e.g., a three-dimensional version of the adjoint method derived previously \cite{bhat2021dynamic}.  Learning memory-dependent $V_{\text{xc}}$ functionals would would bring TDDFT beyond the realm of the adiabatic approximation, fixing the egregious errors of commonly employed memory-independent potentials and enabling more accurate time-dependent electron dynamics simulations of molecules and materials. 


\begin{acknowledgments}
Research was sponsored by the Office of Naval Research and was accomplished under
Grant Number W911NF-23-1-0153. The views and conclusions contained in this document are those of the authors and should not be interpreted as representing the official policies, either expressed or implied, of the Army Research Office or the U.S. Government. The U.S. Government is authorized to reproduce and distribute reprints for
Government purposes notwithstanding any copyright notation herein. We acknowledge support from the U.S. Department of Energy, Office of Science, Basic Energy Sciences under Award Number DE-SC0020203. H. Bassi acknowledges partial support from NSF DMS-1840265. This research used resources of the National Energy Research Scientific Computing Center (NERSC), a U.S. Department of Energy Office of Science User Facility located at Lawrence Berkeley National Laboratory, operated under Contract No. DE-AC02-05CH11231 using NERSC awards BES-m2530 and ASCR-m4577. We also gratefully acknowledge computational time on the Pinnacles cluster at UC Merced, supported by NSF OAC-2019144.
\end{acknowledgments}

\section*{Data Availability Statement}
The data that support the findings of this study are openly
available in \url{https://github.com/hbassi/1rdm_memory_model}.

\appendix

\section{Proofs of Methodological Results}
\subsection{Proof of Proposition \ref{Bten}}
\label{sect:proofBten}
\begin{proof}
Returning to (\ref{eqn:slater}-\ref{eqn:CIbasis}), let us define $\gamma^{n,q} = (\gamma^{n,q}_1, \ldots, \gamma^{n,q}_N)$ to be the $n$-th permutation of the combination $\mathbf{i}(q)$.  There are $N!$ such permutations.  Let the parity of the $n$-th permutation be $p_n$.  We now slightly abuse notation by referring to  $(\br_1,\sigma_1)$ as $\mathbf{x}_1$ and $(\br'_1,\sigma_1)$ as $\mathbf{x}'_1$; building on this, we use $\mathbf{X}$ to mean 
$(\mathbf{x}_1,\mathbf{x}_2,\ldots,\mathbf{x}_N)$, while $\mathbf{X}'$ means $(\mathbf{x}_1',\mathbf{x}_2,\ldots,\mathbf{x}_N)$.  This matches what we need to evaluate (\ref{eqn:sub1}).
With this notation and (\ref{eqn:spinorbital}), we can expand Slater determinants via (\ref{eqn:slater}) and write
\begin{equation}
\label{eqn:slatersmash}
\psi^\text{SL}_{\mathbf{i}(q)}(\mathbf{X}) \psi^\text{SL}_{\mathbf{i}(q')}(\mathbf{X'})^\ast = \frac{1}{N!} \sum_{n=1}^{N!} \sum_{m=1}^{N!} (-1)^{p_n+p_m} \chi_{\gamma_1^{n,q}}(\mathbf{x}_1) \chi_{\gamma_1^{m,q'}}(\mathbf{x}'_1)^\ast \prod_{j=2}^N \chi_{\gamma_j^{n,q}}(\mathbf{x}_j) \chi_{\gamma_j^{m,q'}}(\mathbf{x}_j)^\ast
\end{equation}
Integrating over $\mathbf{x}_2, \ldots, \mathbf{x}_N$ and using the orthonormality of the spin-orbitals, we find that the integral will vanish unless $\gamma_j^{n,q} = \gamma_j^{m,q'}$ for all $j \geq 2$.
Using the Kronecker $\delta$, we have
\[
\int \psi^\text{SL}_{\mathbf{i}(q)}(\mathbf{X}) \psi^\text{SL}_{\mathbf{i}(q')}(\mathbf{X}')^\ast \, d \mathbf{x}_2 \cdots d \mathbf{x}_N = \frac{1}{N!} \sum_{n=1}^{N!} \sum_{m=1}^{N!} (-1)^{p_n+p_m}  \chi_{\gamma_1^{n,q}}(\mathbf{x}_1) \chi_{\gamma_1^{m,q'}}(\mathbf{x}'_1)^\ast \prod_{j = 2}^{N} \delta_{\gamma_j^{n,q}, \gamma_j^{m,q'}}
\]
Let $\epsilon_{a,b} = 1$ if $a$ and $b$ are either both even or both odd; let $\epsilon_{a,b} = 0$ otherwise.  Then integrating over $\sigma_1$ and using (\ref{eqn:spinorbital}), we obtain
\begin{equation}
\label{eqn:radred}
\bigl( \psi^\text{SL}_{\mathbf{i}(q)} (\psi^\text{SL}_{\mathbf{i}(q')})^\ast \bigr)_1 = \frac{1}{N!} \sum_{n=1}^{N!} \sum_{m=1}^{N!} (-1)^{p_n+p_m}  \phi_{\lceil \gamma_1^{n,q}/2 \rceil} \phi_{\lceil \gamma_1^{m,q'}/2 \rceil}^\ast \epsilon_{\gamma_1^{n,q},\gamma_1^{m,q'}} \prod_{j = 2}^{N} \delta_{\gamma_j^{n,q}, \gamma_j^{m,q'}}
\end{equation}
Essentially, the combinations $\mathbf{i}(q)$ and $\mathbf{i}(q')$ can differ in at most one slot, in which case, in the slot that differs, the numbers must either both be even or both be odd.  If these conditions are not satisfied, the result will be zero.  We now consider the two cases in turn.

\textbf{Case I:} $q=q'$, so   $\mathbf{i}(q) = \mathbf{i}(q')$.  If two permutations of the same set of $N$ integers agree in $N-1$ slots, they must be identical.  For each $n$, there is only one value of $m$ such that $\gamma^{m,q} = \gamma^{n,q}$.  So,
\begin{equation}
\label{eqn:radred2}
\bigl( \psi^\text{SL}_{\mathbf{i}(q)} (\psi^\text{SL}_{\mathbf{i}(q)})^\ast \bigr)_1 = \frac{1}{N!} \sum_{n=1}^{N!} \phi_{\lceil \gamma_1^{n,q}/2 \rceil} \phi_{\lceil \gamma_1^{n,q}/2 \rceil}^\ast 
\end{equation}
Holding $\gamma_1^{n,q}$ fixed at one of its $N$ possible values, there are $(N-1)!$ possible permutations of $\gamma_j^{n,q}$ for $j \geq 2$.  Accounting for this, (\ref{eqn:radred2}) becomes (\ref{eqn:radred3})

In (\ref{eqn:radred3}), it is possible that as $k$ goes from $1$ to $N$, the value $\lceil i_k(q)/2 \rceil$ repeats up to twice.  This is because in a single Slater determinant, it is possible for up to two spin orbitals to share the same spatial orbital.

\textbf{Case II:} $q \neq q'$, so  $\mathbf{i}(q) \neq \mathbf{i}(q')$.  Further assume that $\mathbf{i}(q)$ and $\mathbf{i}(q')$ are not reorderings of one another; this would yield Slater determinants that differ at most by a sign, and hence would not be a good choice to include in the CI expansion (\ref{eqn:CIbasis}).  For a nonzero integral, there must exist unique integers $a \in \mathbf{i}(q)$, $a' \in \mathbf{i}(q')$ such that $a \neq a'$ with $\epsilon_{a,a'} = 1$.  

As above, there are $(N-1)!$ permutations $\gamma^{n,q}$ such that $\gamma^{n,q}_1 = a$.  For each such permutation, there is only one value of $m$ such that $\gamma^{m,q'}_j = \gamma^{n,q}_j$ for all $j \geq 2$.  Let $Z$ be the least number of flips required to permute $\mathbf{i}(q')$ so that it matches $\mathbf{i}(q)$ in all but the first slot; then $(-1)^{p_n + p_m} = (-1)^Z$ for all permutations that yield nonzero integrals.  Using these facts in (\ref{eqn:radred}), we obtain (\ref{eqn:radred4}).

In (\ref{eqn:radred4}), note that $\lceil a/2 \rceil$ must differ from $\lceil a'/2 \rceil$.  To see why, consider that if $\lceil a/2 \rceil = \lceil a'/2 \rceil$, then either $a = a' + 1$ or $a' = a + 1$.  Both of these cases would have resulted in different spin functions of the $\sigma_1$ spin variable; by orthogonality, this yields $0$.
\end{proof}

\subsection{Proof of Proposition \ref{Qprop}}
\label{sect:Qpropproof}
\begin{proof}
Using $\widetilde{B}^T$, we can reformulate (\ref{eqn:rdmdef}) as
$\vvec(Q(t))^T = \vvec(P(t))^T \widetilde{B}^T$.
Transposing yields
\begin{equation}
\label{eqn:matvec}
\widetilde{B} \vvec(P(t)) = \vvec(Q(t)).
\end{equation}
We can identify this with (\ref{eqn:reduced}): $\vvec(Q(t))$ is $\mathbf{y}(t)$, $\vvec(P(t))$ is $\mathbf{z}(t)$, and $\widetilde{B}$ is $R$.  Suppose we are interested in solving for the full TDCI density $\vvec(P(t))$ given the right-hand side, the reduced density $\vvec(Q(t))$. The problem is that $\widetilde{B}$ is of dimension $K^2 \times N_C^2$ where $K < N_C$; the resulting linear system is underdetermined.  As in Section \ref{sect:yprop}, we augment (\ref{eqn:matvec}) by relating $\vvec(P(t))$ to enough past 1RDMs $\vvec(Q(t-j \Delta t))$ to create an overdetermined system.  

By (\ref{eqn:discsuper}) and (\ref{eqn:kronsum}), we have
\begin{equation}
\label{eqn:vvecPprop}
\vvec(P(t)) = \left( \exp(i H(t-\Delta t)^T \Delta t) \otimes \exp(-i H(t-\Delta t) \Delta t) \right) \vvec(P(t-\Delta t)).
\end{equation}
Inverting, iterating, and using properties of the Kronecker product, we obtain
\begin{equation}
\label{eqn:backwardPt}
(C_m(t)^T \otimes A_m(t)) \vvec(P(t)) = \vvec(P(t-m \Delta t)).
\end{equation}
As an aside, because $H$ is Hermitian, $A_m(t)$ is the conjugate transpose of $C_m(t)$---see (\ref{eqn:aj}) and (\ref{eqn:cj})---implying that we need only compute one of the two at each $t$.   Multiplying (\ref{eqn:backwardPt}) through by $\widetilde{B}$, we obtain
\begin{equation}
\label{eqn:backwardQt}
\mathcal{D}_j \vvec(P(t)) = \vvec(Q(t - j k \Delta t)).
\end{equation}
Note that (\ref{eqn:backwardPt}) and (\ref{eqn:backwardQt}) are particular cases of (\ref{eqn:backprop}) and (\ref{eqn:usefulfact}).  The stride $k$ allows one to use every $k$-th previous 1RDM, where $k$ need not equal $1$. Using what we have defined, we form the linear system\begin{equation}
\label{eqn:bigsys}
M(t) \vvec(P(t)) = \mathbf{q}_{\ell}(t).
\end{equation}
This relates the full TDCI density matrix $P(t)$ to  \emph{present and past} $1$-electron reduced density matrices. The block matrix $M(t)$ on the left-hand side---a special case of $M(t)$ defined by (\ref{eqn:Mdef2})---will be of dimension $(\ell+1) K^2 \times N_C^2$. By hypothesis, $M(t)$ has full column rank.  Hence
\begin{equation}
\label{eqn:approxinversion}
\vvec(P(t)) = M(t)^{+} \mathbf{q}_{\ell}(t).
\end{equation}
To obtain (\ref{eqn:Qprop}), we return to (\ref{eqn:discsuper}), multiply both sides by $\widetilde{B}$, and then use (\ref{eqn:approxinversion}).
\end{proof}

\subsection{Proofs of Results from Section \ref{sect:symm}}
\label{sect:SymmConstrProofs}
\noindent Proof of Lemma \ref{HermLem}:
\begin{proof}
All $S^j$ matrices are orthogonal in the Frobenius inner product, implying linear independence.  Given any $Z \in \mathbb{H}$,  there exists $\br \in \mathbb{R}^{N_C^2}$ such that $Z = \sum_{j} r_j S^j$.  To find this vector, first enumerate the diagonal entries of $Z$, then the real parts of the strictly upper-triangular entries, and finally the imaginary parts of the strictly upper-triangular entries.
\end{proof}
\noindent Proof of Proposition \ref{Hermitian}:
\begin{proof}
As each $S^j$ is an $N_C \times N_C$ matrix, the collection $S$ is a tensor of shape $N_C^2 \times N_C \times N_C$.  Reshape $S$ into a matrix $\widetilde{S}$ such that for all $Z \in \mathbb{H}$ with $Z = \sum_{j} r_j S^j$,
$\vvec(Z) = \widetilde{S} \br$.
Explicitly, $S^j_{k,l} = \widetilde{S}_{(k-1)N_C + l, j}$.  Using the notation $\widetilde{S}_{:,j}$ to denote the $j$-th column of $\widetilde{S}$, we can equivalently say that $\widetilde{S}_{:,j} = \vvec(S^j)$.  We see that $\widetilde{S}$ gives us a real representation of $\mathbb{H}$.  Using this representation in  (\ref{eqn:bigsys}), we obtain (\ref{eqn:bigsys2}).  Once we solve for $\mathbf{x}(t)$, we can recover the full density matrix via $\vvec(P(t)) = \widetilde{S} \mathbf{x}(t)$.   This $P(t)$ will be perfectly Hermitian.
\end{proof}
\noindent Proof of Proposition \ref{traceconstr}:
\begin{proof}
By virtue of how we have defined the $\{S^j\}$ basis, the first $N_C$ entries of $\mathbf{x}(t)$ correspond to the real, diagonal entries of $P(t)$.  Thus $\trace(P(t)) = 1$ implies
\begin{equation}
\label{eqn:traceconstr}
x_{N_C}(t) = 1 - \sum_{j=1}^{N_C-1} x_{j}(t).
\end{equation}
Using (\ref{eqn:traceconstr}) in (\ref{eqn:bigsys2}), we obtain
\begin{equation}
\label{eqn:columnwise}
\sum_{j=1}^{N_C-1} \left( [M(t) \widetilde{S}]_{:, j} - [M(t) \widetilde{S}]_{:, N_C} \right) x_j(t) + \sum_{j=N_C+1}^{N_C^2} [M(t) \widetilde{S}]_{:, j} x_j(t) = \mathbf{q}_{\ell}(t) - [M(t) \widetilde{S}]_{:, N_C}.
\end{equation}
To form $M'(t)$, we start with $M(t)$ and subtract its $N_C$-th column $[M(t) \widetilde{S}]_{:, N_C}$ from each of its first $N_C-1$ columns.  From the result, delete the $N_C$-th column and keep the remaining columns unchanged.  With this definition of $M'(t)$, (\ref{eqn:columnwise}) is (\ref{eqn:bigsys3}).  After solving for $\mathbf{x}_{-N_C}(t)$, we can reconstruct $\mathbf{x}(t)$ using (\ref{eqn:traceconstr}); observe that $\mathbf{x}(t)$ is an affine function of $\mathbf{x}_{-N_C}(t)$.
\end{proof}
\noindent Proof of Proposition \ref{identzero}:
\begin{proof}
Start with $M'(t)$ as constructed in Proposition \ref{traceconstr}.  For each of the $D$ entries of $P(t)$ that are known to be zero, eliminate the corresponding column of $M'(t)$ and the corresponding entry of $\mathbf{x}_{-N}(t)$, resulting in matrices $M''(t)$ and vectors $\mathbf{x}_{-}(t)$.  Here the correspondence will be with respect to the ordering of our basis elements $\{S^j\}$.
\end{proof}
\noindent Proof of Proposition \ref{Qpropsymm}:
\begin{proof}
The proof is identical to that of Proposition \ref{Qprop} except that we replace (\ref{eqn:approxinversion}) with the expression labeled $\vvec(P(t))$ in (\ref{eqn:Qpropsymm}).  To justify this expression, first note that recovering $\mathbf{x}_{-N}(t)$ from $\mathbf{x}_{-}(t)$ requires a simple linear transformation.  As described at the end of the proof of Proposition \ref{traceconstr}, computing $\mathbf{x}(t)$ from $\mathbf{x}_{-}(t)$ amounts to applying an affine transformation.  Composing these maps, we obtain the affine map $\mathcal{A}$ that satisfies $\mathbf{x}(t) = \mathcal{A} \mathbf{x}_{-}(t)$.  As $M''(t)$ has full column rank, we can compute $\mathbf{x}_{-}(t) = M''(t)^{+} \mathbf{b}_{\ell}(t)$.  Our real representation of $\mathbb{H}$ yields $\vvec(P(t)) = \widetilde{S} \mathbf{x}(t)$.   In total, we obtain $\vvec(P(t)) = \widetilde{S} \mathcal{A}  M''(t)^{+} \mathbf{b}_{\ell}(t)$.
\end{proof}

\subsection{Proof of Proposition \ref{Schur}}
\label{sect:Schurproof}
\begin{proof} To study the linear independence of the rows of $M(t)$, we consider 
\begin{equation}
\label{eqn:original}
x \widetilde{B} +  y \mathcal{D}_1 = 0.
\end{equation}
for $x,y \in \mathbb{C}^{K^2}$.  Overall, (\ref{eqn:original}) is an equality of row vectors in $\mathbb{C}^{N_C^2}$.  Let us restrict attention to only those columns with indices in $G$: $x \widetilde{B}^G + y \mathcal{D}_1^G = 0$.
By hypothesis, the $K^2 \times K^2$ matrix $\widetilde{B}^G$ has full rank and is thus invertible.  Using this, we solve for $x = -y \mathcal{D}_1^G (\widetilde{B}^G)^{-1}$. Putting this back into (\ref{eqn:original}), we obtain
$y \left( \mathcal{D}_1 - \mathcal{D}_1^G (\widetilde{B}^G)^{-1} \widetilde{B} \right) = 0$. As it is clear that subsetting on the columns in $G$ will yield the zero matrix, we instead subset on the columns \emph{not in} $G$:
\[
y \left( \mathcal{D}_1^{-G} - \mathcal{D}_1^G (\widetilde{B}^G)^{-1} \widetilde{B}^{-G} \right) = 0.
\]
The $K^2 \times (N_C^2 - K^2)$ matrix multiplying $y$ is  the Schur complement of $\widetilde{B}^G$ of $\widehat{M}$.  As this Schur complement has full rank, we conclude that $y=0$, forcing $x=0$.
\end{proof}

\section{Constant Trace for 1RDMs}
\label{appendix:trace}
Suppose one computes 1RDMs $Q(t)$ directly from full density matrices $P(t)$ using the $B$ tensor as in (\ref{eqn:rdmdef}).  The following theorem shows that the resulting 1RDMs must have constant trace.

\begin{theorem}[Constant Trace]
\label{constanttrace}
Regardless of which Slater determinants are used to form the CI basis functions, the 1RDM $Q(t)$ has constant trace equal to $N$, the number of electrons in the system.
\end{theorem}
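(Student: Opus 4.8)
The plan is to evaluate $\trace(Q(t))$ directly from the integral definitions of the reduced density operator, reducing everything to the normalization $\|\Psi_t\| = 1$. The one structural input required is the following: for every fixed value of the remaining coordinates $\mathbf{Y} := (\sigma_1,\mathbf{x}_2,\ldots,\mathbf{x}_N)$, the function $\br \mapsto \Psi_t(\br,\mathbf{Y})$ lies in the $K$-dimensional subspace $V := \operatorname{span}\{\phi_1,\ldots,\phi_K\}$. This follows by expanding each Slater determinant (\ref{eqn:slater}) in cofactors along the row carrying the $\mathbf{x}_1$-dependence: by (\ref{eqn:spinorbital}), the $\br_1$-dependence of $\chi_{i_m}(\br_1,\sigma_1)$ is a scalar multiple of $\phi_{\lceil i_m/2 \rceil}(\br_1)$, so $\Psi_t(\cdot,\mathbf{Y})$ is a ($\mathbf{Y}$-dependent) linear combination of $\phi_1,\ldots,\phi_K$. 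Hence we may write $\Psi_t(\br,\mathbf{Y}) = \sum_{j=1}^K c_j(\mathbf{Y})\,\phi_j(\br)$ with $c_j(\mathbf{Y}) = \langle \phi_j,\Psi_t(\cdot,\mathbf{Y})\rangle$.

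Substituting this expansion into the pair density (\ref{eqn:pairdensity}) and using $\langle\phi_b,\phi_c\rangle = \delta_{bc}$, the definitions (\ref{eqn:pairdenop}) and (\ref{eqn:rdmdef}) collapse to $Q_{b,c}(t) = N\int c_b(\mathbf{Y})\,\overline{c_c(\mathbf{Y})}\,d\mathbf{Y}$ for $1 \le b,c \le K$. Summing the diagonal and applying Parseval's identity in $V$ pointwise in $\mathbf{Y}$ — legitimate exactly because $\Psi_t(\cdot,\mathbf{Y}) \in V$ and $\{\phi_j\}_{j=1}^K$ is an orthonormal basis of $V$ — gives
\[
\trace(Q(t)) = N\int \sum_{b=1}^K |c_b(\mathbf{Y})|^2 \, d\mathbf{Y} = N \int\!\int |\Psi_t(\br,\mathbf{Y})|^2 \, d\br\, d\mathbf{Y} = N\,\|\Psi_t\|^2 .
\]
Finally $\|\Psi_t\|^2 = \sum_n |a_n(t)|^2 = \mathbf{a}(t)^\dagger\mathbf{a}(t) = 1$ for all $t$, by orthonormality of the CI basis functions together with normalization of $\Psi$ (equivalently, $\trace(P(t)) = 1$), so $\trace(Q(t)) = N$ independently of $t$.

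I expect the only genuinely delicate point to be the structural fact of the first paragraph — that $\Psi_t(\cdot,\mathbf{Y}) \in V$ — since this is precisely what guarantees that the finite sum over $b \le K$ already accounts for the entire trace of the operator $\mathrm{Q}_t$ (no hidden mass on $V^\perp$); everything after that is bookkeeping with orthonormality. An equivalent route, closer to the paper's machinery, is to write $\trace(Q(t)) = \sum_{k,\ell} P_{k,\ell}(t) \sum_{b=1}^K B_{k,\ell,b,b}$ from (\ref{eqn:rdmdef}) and use Proposition \ref{Bten}: its $q=q'$ formula (\ref{eqn:radred3}) has operator trace $1$ (the normalization of a single Slater determinant), while its $q\neq q'$ formula (\ref{eqn:radred4}) has trace zero because $\lceil a/2\rceil \neq \lceil a'/2\rceil$; combining these with orthonormality of the CI basis yields $\sum_{b=1}^K B_{k,\ell,b,b} = N\langle \Psi_\ell^{\mathrm{CI}},\Psi_k^{\mathrm{CI}}\rangle = N\delta_{k\ell}$, whence $\trace(Q(t)) = N\,\trace(P(t)) = N$. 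Either way the time-independence is automatic, since the only property of $\Psi_t$ invoked is that it has unit norm at every $t$.
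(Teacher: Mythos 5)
Your primary argument is correct, but it takes a genuinely different route from the paper's. The paper works entirely at the level of the $B$ tensor: it computes $\sum_{b=1}^K B_{k,\ell,b,b}$ from Proposition \ref{Bten}, noting that the $q \neq q'$ contributions (\ref{eqn:radred4}) have vanishing trace while each $q = q'$ contribution (\ref{eqn:radred3}) has trace one, then uses orthonormality of the Slater determinants to deduce that $C$ is unitary, arriving at $\sum_{b} B_{k,\ell,b,b} = N\delta_{k\ell}$ and hence $\trace(Q(t)) = N\trace(P(t)) = N$ --- this is exactly your second, ``equivalent route,'' which you have sketched correctly. Your first route instead works directly with the integral definitions: the structural observation that $\Psi_t(\cdot,\mathbf{Y})$ lies in $\operatorname{span}\{\phi_1,\ldots,\phi_K\}$ for every fixed $\mathbf{Y}$ (by cofactor expansion of (\ref{eqn:slater}) along the $\mathbf{x}_1$ row together with (\ref{eqn:spinorbital})) is valid and is indeed the crux, since it guarantees the finite matrix trace equals the operator trace; Parseval then collapses $\trace(Q(t))$ to $N\|\Psi_t\|^2 = N$. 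This is cleaner and more conceptual --- it exposes constant trace as nothing but norm conservation of $\Psi_t$ --- and it bypasses Proposition \ref{Bten} entirely, including its Case~II assumption that no two index sets $\mathbf{i}(q)$, $\mathbf{i}(q')$ are reorderings of one another. What the paper's computation buys in exchange is the stronger linear-algebraic identity $\trace(Q) = N\trace(P)$ for an \emph{arbitrary} matrix $P$ contracted against $B$, not only the pure state $P(t) = \mathbf{a}(t)\mathbf{a}(t)^\dagger$; that stronger form is what the paper invokes after Proposition \ref{Qpropsymm} to conclude that the constraint-preserving scheme conserves $\trace(Q(t)) = N$ even though its reconstructed $P(t)$ is only approximately a rank-one projector. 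Your Parseval argument, as stated, yields the conclusion only for the exact wave-function density.
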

\begin{proof}
    From (\ref{eqn:rdmdef}), we obtain
\begin{equation}
\label{eqn:traceQdef}
\trace(Q(t)) = \sum_{k,\ell=1}^{N_C} P_{k,\ell}(t) \sum_{b=1}^{K} B_{k,\ell,b,b}.
\end{equation}
From (\ref{eqn:Btendef}) and (\ref{eqn:radred10}),
\begin{equation*}
\sum_{b=1}^{K} B_{k,\ell,b,b} = N \sum_{q,q'} \sum_{b=1}^{K} \int \overline{\phi_b(\br)}  C_{k q} \overline{C_{\ell q'}} \left( \psi^{\text{SL}}_{\mathbf{i}(q)} (\psi^{\text{SL}}_{\mathbf{i}(q')})^\ast \right)_1 (\br) \phi_b(\br) \, d \br.
\end{equation*}
Examining (\ref{eqn:radred4}), we see that this integral will vanish in Case II, whenever $q \neq q'$.  Thus we need only consider $q = q'$ (Case I), upon which (\ref{eqn:radred3}) yields
\begin{align}
\sum_{b=1}^{K} B_{k,\ell,b,b} &= \sum_{q=1}^{N^C} \sum_{b=1}^{K} \int \overline{\phi_b(\br)}  C_{k q} \overline{C_{\ell q}}  \sum_{m=1}^{N} \phi_{\lceil i_m(q)/2 \rceil} (\br)  \phi_{\lceil i_m(q)/2 \rceil}^\ast  (\br) \phi_b(\br) \, d \br \nonumber \\
 \label{eqn:Btrace}
 &= \sum_{q=1}^{N^C} \sum_{b=1}^{K} \sum_{m=1}^{N} \delta_{ \lceil i_m(q)/2 \rceil, b} C_{k q} \overline{C_{\ell q}} = \sum_{q=1}^{N^C} N C_{k q} \overline{C_{\ell q}}
\end{align}
To obtain the final equality, note that for each fixed $m$, there is precisely one $b$ such that $\lceil i_m(q)/2 \rceil = b$.  Summing over $m$, we obtain $N$.

Next, we claim that Slater determinants themselves are orthonormal: to see this, return to (\ref{eqn:slatersmash}) and integrate both sides with respect to $\mathbf{X} = (\mathbf{x}_1, \ldots, \mathbf{x}_N)$.  We will obtain zero unless $\gamma^{n,q} \equiv \gamma^{m,q'}$, which is impossible unless $q=q'$.  In that case, the integral will be $1$, i.e., $\langle \psi^\text{SL}_{\mathbf{i}(q)}, \psi^\text{SL}_{\mathbf{i}(q')}\rangle = \delta_{q,q'}$.  Using this, orthonormality of the CI basis functions (\ref{eqn:CIrep}) yields
\[
\delta_{a b} = \langle \Psi^\mathrm{CI}_a, \Psi^\mathrm{CI}_b \rangle = \sum_{q,q'=1}^{N_C} \overline{C_{a q}} C_{b q'} \langle \psi^\text{SL}_{\mathbf{i}(q)}, \psi^\text{SL}_{\mathbf{i}(q')}\rangle = \sum_{q=1}^{N_C} \overline{C_{a q}} C_{b q},
\]
i.e., $C$ is unitary.  Using this, (\ref{eqn:Btrace}),  (\ref{eqn:traceQdef}), and $\trace(P(t)) \equiv 1$, we find
\begin{equation}
\label{eqn:traceQ}
\trace(Q(t)) = \sum_{k,\ell=1}^{N_C} P_{k,\ell}(t) N \delta_{k,\ell} = N.
\end{equation}
\end{proof} 

\section{Propagating the 1RDM when there are only $1$-electron terms in the Hamiltonian}
\label{sect:oneelectron}
If we retain only $1$-electron terms in our Hamiltonian, then we neglect (entirely) all electron-electron interactions, including Coulomb, exchange, and correlation terms.   In this case, it should be possible to propagate the 1RDM exactly with no memory.  While this makes sense physically, we would like to establish that for such a Hamiltonian, the 1RDM propagation scheme from the manuscript works correctly with no memory.  Specifically, consider the $\ell=0$ (memoryless) case of (\ref{eqn:Qprop}) from the manuscript:
\begin{equation}
\label{eqn:memorylessprop}
\operatorname{vec}(Q(t+1)) = \widetilde{B} \bigl( \exp(i H(t)^T \Delta t) \otimes \exp(-i H(t) \Delta t) \bigr) \widetilde{B}^+ \operatorname{vec}(Q(t)).
\end{equation}
Here $H(t)$ stands for the \emph{full} TDCI Hamiltonian.  In this appendix, we analyze (\ref{eqn:memorylessprop}) fully in the case where $H(t)$ only contains $1$-electron terms, and show that it reduces to an exact propagation equation that involves only the \emph{1-electron Hamiltonian}.

\subsection{Setup}
We will first work in the specific setting where we have $N=2$ electrons, $K=2$ atomic orbitals, and $N_C = 4$ total CI determinants.  These parameters correspond to $\heh$ and $\htwo$ in STO-3G.  Later, in Subsection \ref{sect:appgen}, we will explain how the calculation generalizes to $\heh$ and $\htwo$ in 6-31G.

In what follows, full CI density matrices $P(t)$ are of size $4 \times 4$, while 1RDMs $Q(t)$ are of size $2 \times 2$.  We will use capital $H$ to denote $4 \times 4$ Hamiltonian matrices, and lowercase $h$ to denote $2 \times 2$ Hamiltonian matrices.

Before analyzing our scheme (\ref{eqn:Qprop}), we set up the problem.  We carry this out in detail to illustrate our methods concretely. For a particular index map $\mathbf{i}(q)$, we have the CI wave function
\begin{equation*}
\Psi^{\text{CI}}_a  = \frac{1}{\sqrt{2}} \sum_{q=1}^4 C_{a q} \left[ \chi_{i_1(q)}(\mathbf{r}_1,\sigma_1)\chi_{i_2(q)}(\mathbf{r}_2,\sigma_2) - \chi_{i_1(q)}(\mathbf{r}_2,\sigma_2)\chi_{i_2(q)}(\mathbf{r}_1,\sigma_1) \right]
\end{equation*}
Recall that for $K$ atomic orbitals, we have $K$ molecular orbitals $\{ \phi_j \}_{j=1}^K$, from which we form $2K$ spin-orbitals (\ref{eqn:spinorbital}).  The index map $\mathbf{i} : \{ 1, 2, 3, 4 \} \to \{1, 2, 3, 4\} \times \{1, 2, 3, 4\}$ tells us the precise identity of each Slater determinant.  For our minimal basis $2$-electron systems, we have spin-orbitals
\begin{equation*}
\chi_{1}(\mathbf{x}) = \phi_1(\mathbf{r}) \alpha(\sigma), \ 
\chi_{2}(\mathbf{x}) = \phi_1(\mathbf{r}) \beta(\sigma), \
\chi_{3}(\mathbf{x}) = \phi_2(\mathbf{r}) \alpha(\sigma), \ \text{ and }
\chi_{4}(\mathbf{x}) = \phi_2(\mathbf{r}) \beta(\sigma);
\end{equation*}
and Slater determinants given by the index mapping
\[
\mathbf{i}(1) = (1,2), \quad \mathbf{i}(3) = (3,2), \quad \mathbf{i}(2) = (1,4), \quad  \mathbf{i}(4) = (3,4).
\]
For future reference, we reconcile this with more standard notation:
\begin{itemize}
\item The $\mathbf{i}(1) = (1,2)$ determinant is the Hartree-Fock determinant $| \psi^{\text{RHF}} \rangle$, where both the $\alpha$ (spin up) and $\beta$ (spin down) electrons are in MO (molecular orbital) 1.
\item The $\mathbf{i}(2) = (3,2)$ determinant corresponds to promoting the $\alpha$ electron from MO 1 to MO 2.  This is the $| \psi_{1 \alpha}^{2 \alpha} \rangle$ determinant.
\item The $\mathbf{i}(3) = (1,4)$ determinant corresponds to promoting the $\beta$ electron from MO 1 to MO 2.  This is the $| \psi_{1 \beta}^{2 \beta} \rangle$ determinant.
\item The $\mathbf{i}(4) = (3,4)$ determinant corresponds to promoting both electrons from MO 1 to MO 2.  This is the $| \psi_{11}^{22} \rangle$ determinant.
\end{itemize}
Suppose we write the one-electron parts of the Hamiltonian in the form $\widehat{H} = \widehat{h}^{1} + \widehat{h}^{2}$, where $\widehat{h}^{j}$ encompasses the kinetic and electron-nuclear operators that affect electron $j$.  Borrowing the notation from (\ref{eqn:molham}), we have
\[
\widehat{h}^{j} = -\frac{1}{2} \nabla_j^2 - \sum_{A=1}^{N'} \frac{Z_A}{ \| \mathbf{r}_j - \mathbf{R}_A \| }.
\]
We introduce the notation
\[
h_{k, \ell} = \int \overline{\chi_{\ell}(\mathbf{r}_j,\sigma_j)} \widehat{h}^{j} \chi_{k}(\mathbf{r}_j,\sigma_j) \, d \mathbf{r}_j.
\]
These integrals do not actually depend on $j$, which is why we purposely do not include a superscript $j$ on the left-hand side.  If we apply $\widehat{h}^{1}$ to a CI wave function, we obtain
\[
\widehat{h}^{1} \Psi^{\text{CI}}_{a} = \frac{1}{\sqrt{2}} \sum_{q=1}^4 C_{a q} \left[ \chi_{i_2(q)}(\mathbf{r}_2,\sigma_2) \widehat{h}^{1} \chi_{i_1(q)}(\mathbf{r}_1,\sigma_1) - \chi_{i_1(q)}(\mathbf{r}_2,\sigma_2) \widehat{h}^{1} \chi_{i_2(q)}(\mathbf{r}_1,\sigma_1) \right].
\]
Hence
\begin{multline*}
\langle \Psi^{\text{CI}}_b, \widehat{h}^{1} \Psi^{\text{CI}}_{a} \rangle = \frac{1}{2} \sum_{q,q'} C_{a q} \overline{C_{b q'}}
\biggl[ \delta_{i_2(q),i_2(q')} \int \overline{\chi_{i_1(q')}(\mathbf{r}_1,\sigma_1)} \widehat{h}^1 \chi_{i_1(q)}(\mathbf{r}_1,\sigma_1) \, d \mathbf{r}_1 \\
- \delta_{i_1(q'),i_2(q)} \int \overline{\chi_{i_2(q')}(\mathbf{r}_1,\sigma_1)} \widehat{h}^1 \chi_{i_1(q)}(\mathbf{r}_1,\sigma_1) \, d \mathbf{r}_1 - \delta_{i_1(q),i_2(q')} \int \overline{\chi_{i_1(q')}(\mathbf{r}_1,\sigma_1)} \widehat{h}^1 \chi_{i_2(q)}(\mathbf{r}_1,\sigma_1) \, d \mathbf{r}_1 \\
+ \delta_{i_1(q),i_1(q')} \int \overline{\chi_{i_2(q')}(\mathbf{r}_1,\sigma_1)} \widehat{h}^1 \chi_{i_2(q)}(\mathbf{r}_1,\sigma_1) \, d \mathbf{r}_1 \biggr].
\end{multline*}
We treat the quantity in square brackets as the $(q,q')$-th entry of a $4 \times 4$ matrix $H_1$.  For our particular index mapping $\mathbf{i}(q)$, this matrix is
\[
H_1 = \begin{bmatrix} 
2 h_{1,1} & h_{1,2} & h_{1,2} & 0 \\
 h_{2,1} & h_{1,1}+h_{2,2} & 0 & h_{1,2} \\
 h_{2,1} & 0 & h_{1,1}+h_{2,2} & h_{1,2} \\
 0 & h_{2,1} & h_{2,1} & 2 h_{2,2} \end{bmatrix}.
 \]
This is consistent with the Slater-Condon rules.  Using the Kronecker product $\otimes$, we can write
\[
H_1 = h \otimes I + I \otimes h,
\]
where $I$ is the $2 \times 2$ identity matrix.  Let  $C^\dagger$ be the conjugate transpose of $C$.  Then $\overline{C}_{b q'} = (C^\dagger)_{q' b}$. With this and the definition of $h_1$, we have
\[
\langle \Psi^{\text{CI}}_b, \widehat{H}^{1} \Psi^{\text{CI}}_{a} \rangle = \frac{1}{2} \left( C H_1 C^\dagger \right)_{ab}.
\]
It is straightforward to show that this is also the matrix of the $\widehat{h}^2$ operator in the CI basis.  Hence if $\widehat{H} = \widehat{h}^{1} + \widehat{h}^{2}$,
\[
\langle \Psi^{\text{CI}}_b, \widehat{H} \Psi^{\text{CI}}_{a} \rangle = \left( C H_1 C^\dagger \right)_{ab} = \left( C (h \otimes I + I \otimes h) C^\dagger \right)_{ab}.
\]
As $h \otimes I + I \otimes h$ is Hermitian, there exists unitary $V$ and real diagonal $\Lambda$ such that $h \otimes I + I \otimes h = V \Lambda V^\dagger$.  We set $C = V^\dagger$, so that $C$ is unitary and
\[
C (h \otimes I + I \otimes h) C^\dagger = \Lambda.
\]
As $C$ is unitary, and as Slater determinants are themselves orthonormal, definition (\ref{eqn:CIbasis}) implies that the $\Psi^\mathrm{CI}_a$ functions are also orthonormal, as one can easily check.\\

\noindent Let $\widehat{H}(t)$ be the time-dependent Hamiltonian
$\widehat{H}(t) = \widehat{H} + f(t) \widehat{\mu}_z$,
where $\widehat{\mu}_z$ is the dipole moment operator in the $z$ direction.  This is sufficient because, in our work, we only apply a field in the $z$ direction.  Note that $\widehat{\mu}_z$ is itself a sum of $1$-electron operators; each is essentially the $z$ component of $\widehat{\mathbf{r}}_j$, the position operator for electron $j$.  Therefore, we can reapply the analysis above.  Let $\mu$ be the $2 \times 2$ dipole moment matrix (in the MO basis) that corresponds to a $1$-electron dipole moment operator in the $z$ direction.  Then the matrix of the $\widehat{\mu}_z$ operator in the CI basis is
\[
\langle \Psi^\text{CI}_b \widehat{\mu}_z \Psi^\text{CI}_a \rangle = (C (\mu \otimes I + I \otimes \mu) C^\dagger)_{a b}.
\]
Forming a time-dependent wave function as in (\ref{eqn:CIrep}) and substituting it into the TDSE, we obtain
\[
i \sum_{n} \frac{d}{dt} a_n(t) \Psi^{\text{CI}}_n(\mathbf{X}) = \sum_{n} a_n(t) (\widehat{H} + f(t) \widehat{\mu}_z) \Psi^{\text{CI}}_n(\mathbf{X}).
\]
We take the inner product of both sides with $\Psi^\text{CI}_m(\mathbf{X})$.  On the left-hand side, we use orthonormality.  On the right-hand side, we use the property that the operator $\widehat{H}$ is diagonalized by the $\Psi^\text{CI}_n$ functions, together with the matrix of the $\widehat{\mu}_z$ operator mentioned above.  
With the notation
\[
h(t) = h + f(t) \mu,
\]
where, as we defined above, $h$ and $\mu$ are $2 \times 2$ matrices in the MO basis, we obtain
\[
i \frac{d}{dt} a_n(t) = \sum_{n} a_n(t) (C (h(t) \otimes I + I \otimes h(t) ) C^\dagger)_{n m}.
\]
The full $4 \times 4$ TDCI Hamiltonian is, by the above results,
\begin{equation}
\label{eqn:fullham}
H(t) = C (h(t) \otimes I + I \otimes h(t)) C^\dagger.
\end{equation}
In practice, we find that $h$ is real symmetric.  This happens because the atomic orbitals, molecular orbitals, and operators in the Hamiltonian (kinetic, electron-nuclear, and dipole moment) are all real.  If $h$ is real symmetric, the matrix $C$ that diagonalizes $h \otimes I + I \otimes h$ can be chosen to be real orthogonal (rather than complex unitary as above).  In this case, the equation of motion for $a_n(t)$ simplifies to (\ref{eqn:matrixschro}) and the density matrix $P(t) = \mathbf{a}(t) \mathbf{a}(t)^\dagger$ satisfies the Liouville-von Neumann equation (\ref{eqn:lvn0}) with time-dependent Hamiltonian $H(t)$.

Let $\mathbf{g}_j(t)$ be the solution of the $2 \times 2$ matrix Schr\"odinger equation $i (d\mathbf{g}_j/dt) = h(t) \mathbf{g}_j(t)$ with initial condition $\mathbf{g}_j(0) = \mathbf{e}_j$, the $j$-th basis vector of $\mathbb{C}^2$.  This implies $\mathbf{g}_j^\dagger(t) \mathbf{g}_k(t) = \delta_{jk}$ for all $t$.  Then, for a general initial condition $\mathbf{a}(0)$, we find that
\[
\mathbf{a}(t) = \sum_{j,k=1}^{2} C (\mathbf{g}_j(t) \otimes \mathbf{g}_k(t)) C^T a_{2(j-1)+k}(0),
\]
 solves the $4 \times 4$ matrix Schr\"odinger equation (\ref{eqn:matrixschro}).
  Using $\mathbf{a}(t)$, we use Definition \ref{redquan} to form the  $2 \times 2$ 1RDM $Q(t)$.  Then, with some algebra, one can derive
\begin{equation}
\label{eqn:appendixQLVN}
i \frac{dQ}{dt} = [h(t), Q(t)].
\end{equation}
The upshot is that when we switch off the $2$-electron term in the Hamiltonian, we obtain a closed Liouville-von Neumann equation of motion for the 1RDM $Q(t)$.  Only the $2 \times 2$ matrix Hamiltonian $h(t)$ is required here.  In what follows, we return to the general case where $h$ is complex Hermitian and $C$ is complex unitary.


\subsection{Concrete evaluation of required matrices and tensors} Our goal is to prove that (\ref{eqn:memorylessprop}), the memoryless ($\ell=0$) case of the scheme (\ref{eqn:Qprop}), matches a time-discretization of (\ref{eqn:appendixQLVN}):
\begin{equation}
\label{eqn:Qallegedprop}
\operatorname{vec}(Q(t + \Delta t)) = (\exp(i h(t)^T \Delta t) \otimes \exp(-i h(t) \Delta t)) \operatorname{vec}(Q(t)).
\end{equation}
We start by evaluating the $B$ tensor directly via (\ref{eqn:Btendef}), which we rewrite as
\begin{gather*}
B_{k,\ell,b,c} = \sum_{q,q'} C_{k q} \mathscr{B}_{q,q',b,c} C^\dagger_{q' \ell} \\
\mathscr{B}_{q,q',b,c} = N \int \overline{\phi_b(\mathbf{r})} \phi_c(\mathbf{r}) \left( \psi^\text{SL}_{\mathbf{i}(q)} (\psi^\text{SL}_{\mathbf{i}(q')})^\ast \right)_1  \, d \mathbf{r}.
\end{gather*}
Using the particular index mapping $\mathbf{i}(q)$ defined above, we find that
\begin{equation}
\label{eqn:coreSLresult}
\left( \psi^\text{SL}_{\mathbf{i}(q)} (\psi^\text{SL}_{\mathbf{i}(q')})^\ast \right)_1 = 
\begin{bmatrix}
\phi_1 \phi_1^\ast & (1/2) \phi_1 \phi_2^\ast & (1/2) \phi_1 \phi_2^\ast & 0 \\
(1/2) \phi_2 \phi_1^\ast & (1/2) (\phi_1 \phi_1^\ast + \phi_2 \phi_2^\ast) & 0 & (1/2) (\phi_1 \phi_2^\ast) \\
(1/2) \phi_2 \phi_1^\ast & 0 & (1/2) (\phi_1 \phi_1^\ast + \phi_2 \phi_2^\ast) & (1/2) \phi_1 \phi_2^\ast \\
0 & (1/2) \phi_2 \phi_1^\ast & (1/2) \phi_2 \phi_1^\ast & \phi_2 \phi_2^\ast
\end{bmatrix}_{q,q'}
\end{equation}
The $4 \times 4 \times 2 \times 2$ tensor $\mathscr{B}$ can be evaluated numerically using (\ref{eqn:coreSLresult}), which upon reshaping into a $16 \times 4$ matrix $\widetilde{\mathscr{B}}^T$ yields both
\[
\widetilde{\mathscr{B}}^T = \begin{bmatrix}
 2 & 0 & 0 & 0 \\
 0 & 0 & 1 & 0 \\
 0 & 0 & 1 & 0 \\
 0 & 0 & 0 & 0 \\
 0 & 1 & 0 & 0 \\
 1 & 0 & 0 & 1 \\
 0 & 0 & 0 & 0 \\
 0 & 0 & 1 & 0 \\
 0 & 1 & 0 & 0 \\
 0 & 0 & 0 & 0 \\
 1 & 0 & 0 & 1 \\
 0 & 0 & 1 & 0 \\
 0 & 0 & 0 & 0 \\
 0 & 1 & 0 & 0 \\
 0 & 1 & 0 & 0 \\
 0 & 0 & 0 & 2 
 \end{bmatrix} \quad \text{and} \quad \widetilde{\mathscr{B}}^+ = \begin{bmatrix}
3/8 & 0 & 0 & -1/8 \\
 0 & 0 & 1/4 & 0 \\
 0 & 0 & 1/4 & 0 \\
 0 & 0 & 0 & 0 \\
 0 & 1/4 & 0 & 0 \\
1/8 & 0 & 0 & 1/8 \\
 0 & 0 & 0 & 0 \\
 0 & 0 & 1/4 & 0 \\
 0 & 1/4 & 0 & 0 \\
 0 & 0 & 0 & 0 \\
1/8 & 0 & 0 & 1/8 \\
 0 & 0 & 1/4 & 0 \\
 0 & 0 & 0 & 0 \\
 0 & 1/4 & 0 & 0 \\
 0 & 1/4 & 0 & 0 \\
 -1/8 & 0 & 0 & 3/8
 \end{bmatrix}.
\]
By direct computation, $\widetilde{\mathscr{B}} \widetilde{\mathscr{B}}^+ = I$, the $4 \times 4$ identity matrix, which is expected since the rows of $\widetilde{\mathscr{B}}$ (equivalently, the columns of $\widetilde{\mathscr{B}}^T$) are linearly independent.  Via direct calculation, one can show that for any $2 \times 2$ matrix $W$,
\begin{equation}
\label{eqn:bplus}
\widetilde{\mathscr{B}}^{+} \operatorname{vec}(4 W + 4 I) = \operatorname{vec}( W \otimes I + I \otimes W ).
\end{equation}
Using $\widetilde{\mathscr{B}} \widetilde{\mathscr{B}}^+ = I$, we see that
\begin{equation}
\label{eqn:baction}
\mathscr{B} \operatorname{vec}( W \otimes I + I \otimes W ) =  \operatorname{vec}(4 W + 4 I).
\end{equation}
Reshaping $\widetilde{\mathscr{B}}^+$ into a $4 \times 4 \times 2 \times 2$ tensor $\mathscr{B}^+$, we define the following tensor of the same shape:
\[
B^{+}_{k \ell b c} = \sum_{q,q'} C^{\dagger}_{q k} \mathscr{B}^+_{q,q',b,c} C_{\ell q'}.
\]
We reshape $B^{+}_{k \ell b c}$ into a $16 \times 4$ matrix $\widetilde{B}^+$.  We claim that $\widetilde{B}^+$ is the Moore-Penrose pseudoinverse of $\widetilde{B}$.  To check this claim, we keep in mind $\sum_{k} C^\dagger_{tk} C_{k q} = \delta_{tq}$ and $\sum_{\ell} C^\dagger_{q'\ell} C_{\ell t'} = \delta_{q' t'}$.  With $\%$ signifying integer modulus, we compute
\begin{align*}
\sum_{r} \widetilde{B}_{jr} \widetilde{B}^{+}_{rm} &= \sum_{r} B_{\lceil r/4 \rceil, (r-1)\%4 +1, \lceil j/2\rceil, (j-1)\%2 + 1} B^{+}_{\lceil r/4 \rceil, (r-1)\%4 +1, \lceil m/2\rceil, (m-1)\%2 + 1} \\
 &= \sum_{k,\ell} B_{k,\ell, \lceil j/2\rceil, (j-1)\%2 + 1} B^{+}_{k,\ell, \lceil m/2\rceil, (m-1)\%2 + 1} \\
 &= \sum_{k,\ell,q,q',t,t'} C_{k q} \mathscr{B}_{q,q',\lceil j/2\rceil, (j-1)\%2 + 1} C^{\dagger}_{q'\ell} C^\dagger_{t k} \mathscr{B}^{+}_{t,t',\lceil m/2\rceil, (m-1)\%2 + 1} C_{\ell t'} \\
 &= \sum_{q,q'} \mathscr{B}_{q,q',\lceil j/2\rceil, (j-1)\%2 + 1} \mathscr{B}^{+}_{q,q',\lceil m/2\rceil, (m-1)\%2 + 1} = \sum_{r} \widetilde{\mathscr{B}}_{j r} \widetilde{\mathscr{B}}^{+}_{r m} = \delta_{jm}.
\end{align*}
This itself implies three of the four properties one must check for $\widetilde{B}^+$ to be the pseudoinverse of $\widetilde{B}$: $\widetilde{B} \widetilde{B}^+ \widetilde{B} = \widetilde{B}$, $\widetilde{B}^+ \widetilde{B} \widetilde{B}^+ = \widetilde{B}^+$, and $\widetilde{B} \widetilde{B}^+$ is Hermitian.  For the remaining property, we must show that the $16 \times 16$ matrix $\widetilde{B}^+ \widetilde{B}$ is Hermitian.  Here we simply reverse the order of the multiplication above to find that
\begin{align*}
\sum_{r} \widetilde{B}^{+}_{jr} \widetilde{B}_{rm} 
 &= \sum_{k,\ell} B^{+}_{k,\ell, \lceil j/2\rceil, (j-1)\%2 + 1} B_{k,\ell, \lceil m/2\rceil, (m-1)\%2 + 1} \\
 &= \sum_{k,\ell,q,q',t,t'} C^\dagger_{q k} \mathscr{B}^{+}_{q,q',\lceil j/2\rceil, (j-1)\%2 + 1} C_{\ell q'} C_{k t} \mathscr{B}_{t,t',\lceil m/2\rceil, (m-1)\%2 + 1} C^{\dagger}_{t'\ell} \\
 &= \sum_{q,q'} \mathscr{B}^{+}_{q,q',\lceil j/2\rceil, (j-1)\%2 + 1} \mathscr{B}_{q,q',\lceil m/2\rceil, (m-1)\%2 + 1}  = \sum_{r} \widetilde{\mathscr{B}}^{+}_{j r} \widetilde{\mathscr{B}}_{r m} = (\widetilde{\mathscr{B}}^{+} \widetilde{\mathscr{B}})_{j m}.
\end{align*}
The matrix $\widetilde{\mathscr{B}}^{+} \widetilde{\mathscr{B}}$ is Hermitian; hence $\widetilde{B}^+$ is the Moore-Penrose pseudoinverse of $\widetilde{B}$.

\subsection{Evaluating the right-hand side of (\ref{eqn:memorylessprop})}
\label{sect:rhsqprop}
Consider the product of $\widetilde{B}$ with a Kronecker product of two matrices $X$ and $Y$:
\begin{align*}
(\widetilde{B} (X \otimes Y))_{js} &= \sum_{r} \widetilde{B}_{j r} (X \otimes Y)_{r s} \\
 &= \sum_{r} B_{\lceil r/4 \rceil, 
  (r-1) \% 4 + 1,\lceil j/2 \rceil , 
  (j-1) \% 2 + 1} X_{\lceil r/4 \rceil, \lceil s/4 \rceil} Y_{(r-1) \% 4 + 1, (s-1) \% 4 + 1} \\
  &= \sum_{k,\ell} B_{k,\ell,\lceil j/2 \rceil , 
  (j-1) \% 2 + 1} X_{k, \lceil s/4 \rceil} Y_{\ell, (s-1) \% 4 + 1} \\
  &= \sum_{k,\ell,q,q'} C^T_{qk} \mathscr{B}_{q,q',\lceil j/2 \rceil,  (j-1) \% 2 + 1} C^\dagger_{q' \ell} X_{k, \lceil s/4 \rceil} Y_{\ell, (s-1) \% 4 + 1}
\end{align*}
Recall (\ref{eqn:fullham}) and the identity $\exp(V W V^{-1}) = V \exp(W) V^{-1}$.  Using this, the unitarity of $C$, and 
\begin{equation}
\label{eqn:H1t}
H_1(t) = h(t) \otimes I + I \otimes h(t),
\end{equation}
we substitute
\begin{subequations}
\label{eqn:XY}
\begin{align}
X &= \exp(i H(t)^T \Delta t) = \overline{C} \exp(i H_1(t)^T \Delta t) C^T \\
Y &= \exp(-i H(t) \Delta t) = C \exp(-i H_1(t) \Delta t) C^\dagger 
\end{align}
\end{subequations}
and obtain
\begin{equation*}
(\widetilde{B} (X \otimes Y))_{js} = \sum_{q,q'} \mathscr{B}_{q,q',\lceil j/2 \rceil,  (j-1) \% 2 + 1} (\exp(i H_1(t)^T \Delta t) C^T)_{q,\lceil s/4 \rceil}  
 (\exp(-i H_1(t)  \Delta t)C^\dagger)_{q', (s-1) \% 4 + 1}.
\end{equation*}
Next we multiply on the right by $\widetilde{B}^+$; via (\ref{eqn:XY}), we see that $\widetilde{B} (X \otimes Y) \widetilde{B}^{+}$ is the matrix that multiplies $\operatorname{vec}(Q(t))$ on the right-hand side of (\ref{eqn:memorylessprop}).  Then
\begin{align*}
&(\widetilde{B} (X \otimes Y) \widetilde{B}^{+})_{jm} \\
&= \sum_{s} \sum_{q,q'} \mathscr{B}_{q,q',\lceil j/2 \rceil,  (j-1) \% 2 + 1} (\exp(i H_1(t)^T \Delta t)C^T)_{q,\lceil s/4 \rceil}  (\exp(-i H_1(t) \Delta t)C^\dagger )_{q', (s-1) \% 4 + 1} \widetilde{B}^{+}_{s m} \\
 &= \sum_{t,t',q,q'} \mathscr{B}_{q,q',\lceil j/2 \rceil,  (j-1) \% 2 + 1} (\exp(i H_1(t)^T \Delta t) C^T)_{q,t}  (\exp(-i H_1(t)  \Delta t)C^\dagger)_{q',t'} B^{+}_{t,t', \lceil m/2\rceil, (m-1)\%2 + 1} \\
 &=  \sum_{r,r',t,t',q,q'}   \mathscr{B}_{q,q',\lceil j/2 \rceil,  (j-1) \% 2 + 1} (\exp(i H_1(t)^T \Delta t)C^T)_{q,t}  (\exp(-i H_1(t) \Delta t)C^\dagger)_{q',t'} \overline{C}_{tr} \\
 &\qquad \qquad \quad \mathscr{B}^{+}_{r,r', \lceil m/2\rceil, (m-1)\%2 + 1} C_{t' r'}\\
 &= \sum_{r,r',q,q'} \mathscr{B}_{q,q',\lceil j/2 \rceil,  (j-1) \% 2 + 1} \exp(i H_1(t)^T \Delta t)_{q,r}  \exp(-i H_1(t) \Delta t)_{q',r'} \mathscr{B}^{+}_{r,r', \lceil m/2\rceil, (m-1)\%2 + 1}.
\end{align*}
We conclude that $\widetilde{B} (X \otimes Y) \widetilde{B}^{+} = 
\widetilde{\mathscr{B}} \Bigl(\exp(i H_1(t)^T \Delta t) \otimes \exp(-i H_1(t) \Delta t) \Bigr) \widetilde{\mathscr{B}}^{+}$.
Observe that all $C$ matrices have cancelled out.
We are now in a position to evaluate the right-hand side of (\ref{eqn:memorylessprop}).  Set
\begin{equation}
\label{eqn:Qtilde}
\widetilde{Q}(t) = Q(t)/4 - I.
\end{equation}
With $\ell=0$, we have $\mathbf{q}_{0}(t) = \operatorname{vec}(Q(t)) = \operatorname{vec}(4 \widetilde{Q}(t) + 4I)$.  Applying (\ref{eqn:bplus}), we have
\begin{align*}
\widetilde{B} &(X \otimes Y) \widetilde{B}^{+} \mathbf{q}_{0}(t) = \widetilde{\mathscr{B}} \Bigl(\exp(i H_1(t)^T \Delta t) \otimes \exp(-i H_1(t) \Delta t) \Bigr) \operatorname{vec}( \widetilde{Q}(t) \otimes I + I \otimes \widetilde{Q}(t) ) \\
 &= \widetilde{\mathscr{B}} \operatorname{vec} \Bigl( \exp(-i (h(t) \otimes I + I \otimes h(t)) \Delta t) ( \widetilde{Q}(t) \times I + I \otimes \widetilde{Q}(t) ) \exp(i (h(t) \otimes I + I \otimes h(t)) \Delta t) \Bigr).
\end{align*}
Now we apply $\exp(-i (h(t) \otimes I + I \otimes h(t)) \Delta t) = \exp(-i h(t) \Delta t) \otimes \exp(-i h(t) \Delta t)$,
and similarly for the $+i$ term.  With this, we have
\begin{align*}
\widetilde{B} & (X \otimes Y) \widetilde{B}^{+} \mathbf{q}_{0}(t) = \widetilde{\mathscr{B}} \operatorname{vec} \Bigl( (\exp(-i h(t) \Delta t) \otimes \exp(-i h(t) \Delta t))( \widetilde{Q}(t) \otimes I + I \otimes \widetilde{Q}(t) )\\
&  \qquad \qquad \qquad \qquad \qquad \qquad \qquad \qquad \qquad \qquad (\exp(i h(t) \Delta t) \otimes \exp(i h(t) \Delta t)) \Bigr) \\
&= \widetilde{\mathscr{B}} \operatorname{vec} \biggl( \Bigl[ 
\exp(-i h(t) \Delta t) \widetilde{Q}(t) \otimes \exp(-i h(t) \Delta t) + \exp(-i h(t) \Delta t) \otimes \exp(-i h(t) \Delta t) \widetilde{Q}(t) \Bigr] \\
&  \qquad \qquad \qquad \qquad \qquad \qquad \qquad \qquad \qquad \qquad (\exp(i h(t) \Delta t) \otimes \exp(i h(t) \Delta t)) \biggr) \\
&= \widetilde{\mathscr{B}} \operatorname{vec} \biggl(
\exp(-i h(t) \Delta t) \widetilde{Q}(t) \exp(i h(t) \Delta t) \otimes I + I \otimes \exp(-i h(t) \Delta t) \widetilde{Q}(t) \exp(i h(t) \Delta t) \biggr) \\
&= \widetilde{\mathscr{B}} \widetilde{\mathscr{B}}^+ \operatorname{vec} \biggl( 4 \exp(-i h(t) \Delta t) \widetilde{Q}(t) \exp(i h(t) \Delta t) + 4 I \biggr) \\
&= \operatorname{vec} \biggl( \exp(-i h(t) \Delta t) (4 \widetilde{Q}(t) + 4I) \exp(i h(t) \Delta t) \biggr) = \operatorname{vec} \biggl( \exp(-i h(t) \Delta t) Q(t) \exp(i h(t) \Delta t) \biggr).
\end{align*}
To obtain the fourth equality, we used (\ref{eqn:bplus}) in reverse.  To obtain the fifth equality, we used $\widetilde{\mathscr{B}} \widetilde{\mathscr{B}}^+ = I$.  The last equality follows from (\ref{eqn:Qtilde}).  Using the above result, we see that the propagation result (\ref{eqn:memorylessprop}) reduces to
\begin{equation}
\label{eqn:Qpropderived}
Q(t + \Delta t) = \exp(-i h(t) \Delta t) Q(t) \exp(i h(t) \Delta t).
\end{equation}
This exactly matches (\ref{eqn:Qallegedprop}).  Taking the $\Delta t \to 0$ limit, (\ref{eqn:Qpropderived}) becomes the Liouville-von Neumann equation (\ref{eqn:appendixQLVN}) that governs the true continuous-time dynamics of the 1RDM $Q(t)$.

\subsection{Generalizing the calculation to 6-31G}
\label{sect:appgen}
In 6-31G, our systems $\heh$ and $\htwo$ have $N=2$ electrons, $K=4$ atomic orbitals, and $N_C = 16$ total CI determinants.  We now have $16 \times 16$ full CI density matrices $P(t)$ and associated Hamiltonians $H(t)$, and we have $4 \times 4$ 1RDMs $Q(t)$ and associated Hamiltonians $h(t)$.

All of the algebraic structure detailed above still holds.  In particular, the Hamiltonians $H(t)$ and $h(t)$ are still related via (\ref{eqn:fullham}).  The marginalized outer product of Slater determinants (\ref{eqn:coreSLresult}) that we found above can be rephrased as the $K=2$ case of
\begin{equation}
\begin{split}
\label{eqn:slmatgen}
\left( \psi^\text{SL}_{\mathbf{i}(q)} (\psi^\text{SL}_{\mathbf{i}(q')})^\ast \right)_1 = \frac{1}{2} \left( \boldsymbol{\Phi} \otimes I + I \otimes \boldsymbol{\Phi} \right)  \\
\boldsymbol{\Phi} = \begin{bmatrix} \phi_1 \\ \phi_2 \\ \vdots \\ \phi_K \end{bmatrix} \begin{bmatrix} \phi_1^\ast & \phi_2^\ast & \cdots & \phi_K^\ast.
\end{bmatrix}
\end{split}
\end{equation}
In 6-31G, with $K=4$ and the index mapping $\mathbf{i}(q)$ that encodes the $16$ Slater determinants used in our calculation, (\ref{eqn:slmatgen}) still holds.  With this, we can numerically evaluate the matrices $\widetilde{\mathscr{B}}$ and $\widetilde{\mathscr{B}}^{+}$.  We find that $\widetilde{\mathscr{B}} \widetilde{\mathscr{B}}^{+} = I$, now the $16 \times 16$ identity.  Via direct calculation, we find that for any $4 \times 4$ matrix $W$, we have
\begin{equation}
\begin{split}
\label{eqn:newBBplus}
\widetilde{\mathscr{B}}^{+} \vvec(8 W + 4 I) = \vvec(W \otimes I + I \otimes W)  \\
\widetilde{\mathscr{B}} \vvec(W \otimes I + I \otimes W) =  \vvec(8 W + 4 I).
\end{split}
\end{equation}
These facts enable reuse of the derivation in Section \ref{sect:rhsqprop}, with (\ref{eqn:Qtilde}) redefined as $\widetilde{Q}(t) = Q(t)/8 - I/2$.  All remaining calculations hold with only dimension-related alterations, e.g., changing $4$ to $16$ and $2$ to $4$ in ceiling and modulus expressions.

For both STO-3G and 6-31G $2$-electron systems, the results of this Appendix prove that for any unitary $C$, if we use only the $1$-electron terms in our Hamiltonian, the scheme (\ref{eqn:Qprop}) propagates the 1RDM $Q(t)$ correctly with zero steps of memory required.

\section{Generalization of the derivation to continuous time}
\label{sect:continuoustime}
Suppose that instead of (\ref{eqn:motivlinsys}) we begin with the time-dependent linear differential equation
\begin{equation}
\label{eqn:contmls}
\frac{d}{dt} \mathbf{z}(t) = A(t) \mathbf{z}(t)
\end{equation}
with initial condition $\mathbf{z}(0)$.  As before $A(t)$ is an $n \times n$ complex matrix.  We assume that the system admits a fundamental solution matrix $\Phi(t)$ that satisfies
\[
\frac{d}{dt} \Phi(t) = A(t) \Phi(t)
\]
with initial condition $\Phi(0) = I$.  Then the solution of (\ref{eqn:contmls}) is $\mathbf{z}(t) = \Phi(t) \mathbf{z}(0)$.
By fundamental\cite{Chicone}, what is meant is that $\Phi(t)$ is invertible, so that we can define
\[
\Phi(t, s) = \Phi(t) \Phi(s)^{-1}.
\]
Then $\mathbf{z}(t) = \Phi(t) \mathbf{z}(0) = \Phi(t) \Phi(t-s)^{-1} \mathbf{z}(t-s) = \Phi(t,t-s) \mathbf{z}(t-s)$.  Based on this, we can write
\[
\mathbf{z}(t-s) =  \Phi(t,t-s)^{-1} \mathbf{z}(t) = (\Phi(t) \Phi(t-s)^{-1})^{-1} \mathbf{z}(t) = \Phi(t-s,t) \mathbf{z}(t).
\]
Multiplying both sides by $R$ and using (\ref{eqn:reduced}) yields
\[
\mathbf{y}(t-s) = R  \Phi(t-s,t) \mathbf{z}(t).
\]
Using this equation at $s = 0, \Delta s, \ldots, \ell \Delta s$, we can write
\[
\mathbf{Y}_{\ell}(t) = \begin{bmatrix}
\mathbf{y}(t)\\
\mathbf{y}(t-\Delta s)\\
\mathbf{y}(t-2\Delta s)\\
\vdots\\
\mathbf{y}(t-\ell\Delta s)
\end{bmatrix} = \underbrace{\begin{bmatrix}
R \\
R \Phi(t-\Delta s,t) \\
R \Phi(t-2 \Delta s,t) \\
\vdots\\
R \Phi(t-\ell \Delta s,t)
\end{bmatrix}}_{M(t)} \mathbf{z}(t).
\]
Then by differentiating (\ref{eqn:reduced}) with respect to $t$, we derive the following equation for $\mathbf{y}(t) \in \mathbb{C}^m$:
\[
\frac{d}{dt} \mathbf{y}(t) = R A(t) \mathbf{z}(t) = R A(t) M(t)^{+} \begin{bmatrix}
\mathbf{y}(t)\\
\mathbf{y}(t-\Delta s)\\
\mathbf{y}(t-2\Delta s)\\
\vdots\\
\mathbf{y}(t-\ell\Delta s)
\end{bmatrix}
\]
As before, we have assumed that $M(t)$ admits the pseudoinverse $M(t)^{+}$.  We defer further analysis of this system, especially its properties in the limit where $\Delta s \to 0$ while $\ell \Delta s$ remains constant, to future work.
\bibliography{mainREVISED}  

\end{document}